\newcommand\blfootnote[1]{%
\begingroup
  \renewcommand\thefootnote{}\footnote{#1}%
  \addtocounter{footnote}{-1}%
  \endgroup
  }
\def\colorful{0}
\newcommand{\new}[1]{{\blue #1}}
  \newcommand{\apnote}[1]{\footnote{{\bf [Ankit: {#1}\bf ]}}}
  \newcommand{\aanote}[1]{\footnote{{\bf [Amir: {#1}\bf ]}}}
  \newcommand{\plnote}[1]{\footnote{{\bf [Po-Ling: {#1}\bf ]}}}
  \newcommand{\vjnote}[1]{\footnote{{\bf [Varun: {#1}\bf ]}}}
\newcommand{\todo}[1]{{\textbf{ [{\red{Todo}}: {#1}]}}}
\newcommand{\new}[1]{{#1}}
\newcommand{\apnote}[1]{}
\newcommand{\aanote}[1]{}
\newcommand{\plnote}[1]{}
\newcommand{\vjnote}[1]{}
\newcommand{\todo}[1]{}
\def\E{\mathbb E}
\def\P{\mathbb P}
\def\R{\mathbb R}
\def\N{\mathbb N}
\newcommand\numberthis{\addtocounter{equation}{1}\tag{\theequation}}
\let\vec\mathbf
\newcommand{\bH}{\vec{H}}
\newcommand{\bI}{\vec{I}}
\newcommand{\bL}{\vec{L}}
\newcommand{\bT}{\vec{T}}
\newcommand{\poly}{\mathrm{poly}}
\newcommand{\cT}{\mathcal{T}}
\newcommand{\cE}{\mathcal{E}}
\newcommand{\cP}{\mathcal{P}}
\newcommand{\cY}{\mathcal{Y}}
\newcommand{\cI}{\mathcal{I}}
\newcommand{\cH}{\mathcal{H}}
\newcommand{\cJ}{\mathcal{J}}
\newcommand{\cR}{\mathcal{R}}
\newcommand{\cS}{\mathcal{S}}
\newcommand{\cA}{\mathcal{A}}
\newcommand{\cC}{\mathcal{C}}
\newcommand{\cX}{\mathcal{X}}
\newcommand{\cB}{\mathcal{B}}
\newcommand{\dtv}{d_{\mathrm{TV}}}
\newcommand{\hel}{d_{\mathrm{h}}}
\newcommand{\cTT}{\mathcal{T}^\textup{\texttt{thresh}}}
\newcommand{\RR}{ \bT_{\mathrm{RR} }}
\newcommand{\ext}{{\mathrm{ext}}}
\newcommand{\nstar}{\mathrm{n}^*}
\newcommand{\conv}{\operatorname{conv}}
\newcommand{\real}{\ensuremath{\mathbb{R}}}
\newcommand{\Ber}{\mathrm{Ber}}
\crefname{equation}{Equation}{Equations}
\crefname{lemma}{Lemma}{Lemmata}
\crefname{claim}{Claim}{Claims}
\crefname{fact}{Fact}{Facts}
\crefname{theorem}{Theorem}{Theorems}
\crefname{proposition}{Proposition}{Propositions}
\crefname{corollary}{Corollary}{Corollaries}
\crefname{remark}{Remark}{Remarks}
\crefname{definition}{Definition}{Definitions}
\crefname{question}{Question}{Questions}
\crefname{condition}{Condition}{Conditions}
\crefname{figure}{Figure}{Figures}
\newtheorem{theorem}{Theorem}[section]
\newtheorem{lemma}[theorem]{Lemma}
\newtheorem{claim}[theorem]{Claim}
\newtheorem{proposition}[theorem]{Proposition}
\newtheorem{corollary}[theorem]{Corollary}
\theoremstyle{definition}
\newtheorem{fact}[theorem]{Fact}
\newtheorem{definition}[theorem]{Definition}
\newtheorem{condition}[theorem]{Condition}
\newtheorem{remark}[theorem]{Remark}
\theoremstyle{definition}
\definecolor{Red}{rgb}{1,0,0}
\definecolor{Blue}{rgb}{0,0,1}
\definecolor{DGreen}{rgb}{0,0.55,0}
\definecolor{Purple}{rgb}{.75,0,.25}
\def\red{\color{Red}}
\title{Simple Binary Hypothesis Testing under Local Differential Privacy  and Communication Constraints\blfootnote{This paper was presented in part at COLT 2023 as an extended abstract~\cite{PenEtal23colt}.}
}
\author{
\begin{tabular}{c c}
Ankit Pensia & Amir R.\ Asadi\\
IBM Research & University of Cambridge\\
{\tt ankitp@ibm.com} & {\tt aa2345@cam.ac.uk}
\\
\\
Varun Jog & Po-Ling Loh\\ 
University of Cambridge & University of Cambridge\\
{\tt vj270@cam.ac.uk}
&
{\tt pll28@cam.ac.uk}
\end{tabular}\\
\\
}
\begin{document}
\maketitle
\begin{abstract}
We study simple binary hypothesis testing under both local differential privacy
(LDP) and communication constraints. We qualify our results as either minimax optimal or instance optimal: the former hold for the set of distribution pairs with prescribed Hellinger divergence and total variation distance, whereas the latter hold for specific distribution pairs. For the sample complexity of simple hypothesis testing under pure LDP constraints, we establish instance-optimal bounds for distributions with binary support; minimax-optimal bounds for general distributions; and (approximately) instance-optimal, computationally efficient algorithms for general distributions. When both privacy and communication constraints are present, we develop instance-optimal, computationally efficient algorithms that achieve the minimum possible sample complexity (up to universal constants). Our results on instance-optimal algorithms hinge on identifying the extreme points of the joint range set $\cA$ of two distributions $p$ and $q$, defined as $\cA := \{(\bT p, \bT q) | \bT \in \cC\}$, where $\cC$ is the set of channels characterizing the constraints.
\end{abstract}

\section{Introduction}
Statistical inference on distributed data is becoming increasingly common, due to the proliferation of massive datasets which cannot be stored on a single server, and greater awareness of the security and privacy risks of centralized data. An institution (or statistician) that wishes to infer an aggregate statistic of such distributed data needs to solicit information, such as the raw data or some relevant statistic, from data owners (individuals). Individuals may be wary of sharing their data due to its sensitive nature or their lack of trust in the institution. The local differential privacy (LDP) paradigm suggests a solution by requiring that individuals' responses divulge only a limited amount of information about their data to the institution. Privacy is typically ensured by deliberately randomizing individuals' responses, e.g., by adding noise. See \cref{def:ldp} below for a formal definition; we refer the reader to Dwork and Roth~\cite{DworkRoth13} for more details on differential privacy.

In this paper, we study distributed estimation under LDP constraints, focusing on simple binary hypothesis testing, a fundamental problem in statistical estimation.
We will also consider LDP constraints in tandem with communication constraints. This is a more realistic setting, since bandwidth considerations often impose constraints on the size of individuals' communications. The case when only communication constraints are present was addressed previously by Pensia, Jog, and Loh~\cite{PenJL22}.

Recall that simple binary hypothesis testing is defined as follows: Let $p$ and
$q$ be two distributions over a finite domain $\cX$, and let $X_1, \dots, X_n \in \cX^n$ be $n$ i.i.d.\ samples drawn from either $p$ or $q$. The goal of the statistician is to identify (with high probability) whether the samples were drawn from $p$ or $q$. This problem has been extensively studied in both asymptotic and nonasymptotic settings~\cite{NeyPea33, Wald45, Cam86}. For example, it is known that the optimal test for this problem is the likelihood ratio test, and its performance can be characterized in terms of divergences between $p$ and $q$, such as the total variation distance, Hellinger divergence, or Kullback--Leibler divergence.
In particular, the sample complexity of hypothesis testing, defined as the
smallest sample size needed to achieve an error probability smaller than a small constant, say, $0.01$, is $\Theta\left(\frac{1}{\hel^2(p,q)}\right)$, where $\hel^2(p,q)$ is the Hellinger divergence between $p$ and $q$.

In the context of local differential privacy, the statistician no longer has access to the original samples $X_1, \dots, X_n$, but only their privatized counterparts: $Y_1, \dots, Y_n \in \cY^n$, for some set $\cY$.\footnote{As shown in Kairouz, Oh, and Viswanath~\cite{KaiOV16}, for simple binary hypothesis testing, we can take $\cY$ to be $\cX$, with the same sample complexity (up to constant factors); see \Cref{fact:id-channel}.} Each $X_i$ is transformed to $Y_i$ via a private channel $\bT_i$, which is simply a probability kernel specifying $\bT_i(y,x) = \mathbb P(Y_i = y|X_i = x)$. With a slight abuse of notation, we shall use $\bT_i$ to denote the transition kernel in $\real^{|{\cY}| \times |{\cX}|}$, as well as the stochastic map $Y_i = \bT_i(X_i)$.
A formal definition of privacy is given below:

\begin{definition}[$\epsilon$-LDP]
\label{def:ldp}
Let $\epsilon \in \R_+$, and let $\cX$ and $\cY$ be two domains. A channel $\bT: \cX \to \cY$	\emph{satisfies $\epsilon$-LDP} if
\begin{equation*}
\sup_{x,x' \in \cX} \sup_{A \subseteq \cY} \P[\bT(x) \in A] - e^{\epsilon} \cdot \P[\bT(x') \in A] \leq 0,
\end{equation*}
where we interpret $\bT$ as a stochastic map on $\cX$.
Equivalently, if $\cX$ and $\cY$ are countable domains (as will be the case for us), a channel $\bT$ is $\epsilon$-LDP if $\sup_{x, x' \in \cX} \sup_{y \in \cY} \frac{\bT(y,x)}{\bT(y,x')} \leq e^\epsilon$, where we interpret $\bT$ as the transition kernel.
\end{definition}
When $\epsilon = \infty$, we may set $Y_i$ equal to $X_i$ with probability 1, and we recover the vanilla version of the problem with no privacy constraints.

Existing results on simple binary hypothesis testing under LDP constraints
have focused on the high-privacy regime of $\epsilon \in (0,c)$, for a constant
$c > 0$, and have shown that the sample complexity is
$\Theta\left(\frac{1}{\epsilon^2 \dtv^2(p,q)}\right)$,
where $\dtv(p,q)$ is the total variation distance between $p$ and
$q$~(cf.~\Cref{fact:sample-complexity-eps-small}).
Thus, when $\epsilon$ is a constant, the sample complexity is $\Theta\left(\frac{1}{\dtv^2(p,q)}\right)$, and when $\epsilon = \infty$ (no privacy), the sample complexity is $\Theta\left(\frac{1}{\hel^2(p,q)}\right)$. Although these two divergences satisfy $\dtv^2(p,q) \lesssim \hel^2(p,q) \lesssim \dtv(p,q)$, the bounds are tight; i.e., the two sample complexities can be quadratically far apart. Existing results therefore do not inform sample complexity when $1 \ll \epsilon < \infty$. This is not an artifact of analysis: the optimal tests in the low and high privacy regimes are fundamentally different.

The large-$\epsilon$ regime has been increasingly used in practice, due to privacy amplification provided by shuffling~\cite{CheSUZZ19, BitEMMRLRKTS17, FelMT21}. Our paper makes progress on the computational and statistical fronts in the large-$\epsilon$ regime, as will be highlighted in \Cref{sec:our-results} below.

\subsection{Problem Setup}

For a natural number $k$, we use $[k]$ to denote the set $\{1,2,\dots,k\}$.
In this paper, we focus on \emph{non-interactive private-coin} protocols. It has been shown in \cite[Appendix A]{PenJL22} that the sample complexity of simple binary hypothesis testing under information constraints is the same (up to multiplicative constants) for  \emph{non-interactive public-coin} and \emph{non-interactive private-coin} protocols.\footnote{We refer the reader to Acharya, Canonne, Liu, Sun, and Tyagi~\cite{AchCLST22-interactive} for differences between various protocols.} As we will be working with both privacy and communication constraints in this
paper, we first define the generic protocol for distributed inference under an
arbitrary set of channels $\cC$ below:

\begin{definition}[Simple binary hypothesis testing under channel constraints]
\label{def:shtgeneric}
Let $\cX$ and $\cY$ be two countable sets. Let $\cC$ be a set of channels from $\cX$ to $\cY$, and let $p$ and $q$ be two distributions on $\cX$. Let $\{U_i\}_{i=1}^n$ denote a set of $n$ users who choose channels $\{\bT_i\}_{i=1}^n \in \cC^n$ according to a deterministic rule\footnote{When $\cC$ is a convex set of channels, as will be the case in this paper, the deterministic rules are equivalent to randomized rules (with independent randomness).} $\cR: [n] \to \cC$.
Each user $U_i$ then observes $X_i$ and generates $Y_i = \bT_i(X_i)$ independently, where $X_1,\dots,X_n$ is a sequence of i.i.d.\ random variables drawn from an (unknown) $r \in \{p,q\}$. The central server $U_0$ observes $(Y_1,\dots,Y_n)$ and constructs an estimate $\widehat{r} = \phi(Y_1,\dots,Y_n)$, for some test $\phi: \cup_{i=1}^\infty \cY^i \to \{p,q\}$. We refer to this problem as \emph{simple binary hypothesis testing under channel constraints}.
\end{definition}
In the non-interactive setup, we can assume that all $\bT_i$'s are identical
equal to some $\bT$, as it will increase the sample complexity by at most a
constant factor~\cite{PenJL22} (cf.\ \Cref{fact:id-channel}).
We now specialize the setting of \Cref{def:shtgeneric} to the case of LDP
constraints:
\begin{definition}[Simple binary hypothesis testing under LDP constraints]
\label{def:sht-ldp}
Consider the problem in \cref{def:shtgeneric} with $\cY = \N$, where $\cC$ is the set of all $\epsilon$-LDP channels from $\cX$ to $\cY$. We denote this problem by $\cB(p,q,\epsilon)$. For a given test-rule pair $(\phi,\cR)$ with $\phi:\cup_{j=1}^ \infty \cY^j \to \{p,q\}$, we say that $(\phi,\cR)$ \emph{solves} $\cB(p,q,\epsilon)$ \emph{with sample complexity $n$} if
\begin{align}
\label{eq:DefErrorProbCommCons}
\P_{(X_1,\dots,X_n) \sim p^{\otimes n}}( \phi(Y_1,\dots,Y_n) \ne p) +   \P_{(X_1,\dots,X_n) \sim q^{\otimes n}}( \phi(Y_1,\dots,Y_n) \ne q) \leq 0.1.
\end{align}
We use $\nstar(p,q,\epsilon)$ to denote the sample complexity of this task, i.e., the smallest $n$ so that there exists a $(\phi,\cR)$-pair that solves $\cB(p,q,\epsilon)$. We use $\cB(p,q)$ and $\nstar(p,q)$ to refer to the setting of non-private testing, i.e., when $\epsilon = \infty$, which corresponds to the case when $\cC$ is the set of all possible channels from $\cX$ to $\cY$.
\end{definition}

For any fixed rule $\cR$, the optimal choice of $\phi$ corresponds to the likelihood ratio test on $\{Y_i\}_{i=1}^n$. Thus, in the rest of this paper, our focus will be optimizing the rule $\cR$, with the choice of $\phi$ made implicitly. In fact, we can take $\cY$ to be $\cX$, at the cost of a constant-factor increase in the sample complexity~\cite{KaiOV16} (cf.~\Cref{fact:output-sze-same}).

We now define the threshold for free privacy, in terms of a large enough universal constant $C_{\text{thresh}} > 0$ which can be explicitly deduced from our proofs:
\begin{definition}[Threshold for free privacy]
\label{def:free-privacy}
We define $\epsilon^*(p,q)$ (also denoted by $\epsilon^*$ when the context is clear) to be the smallest $\epsilon$ such that $\nstar(p,q,\epsilon) \leq C_{\text{thresh}} \cdot \nstar(p,q)$; i.e., for all $\epsilon \ge \epsilon^*(p,q)$, we can obtain 	$\epsilon$-LDP without any substantial increase in sample complexity compared to the non-private setting.
\end{definition}

Next, we study the problem of simple hypothesis testing under both privacy and
communication constraints.
By communication constraints,
we mean that the channel $\bT$ maps from $\cX$ to $[\ell]$
for some $\ell \in \N$, which is potentially much smaller than $|\cX|$.

\begin{definition}[Simple binary hypothesis testing under LDP and communication constraints]
\label{def:sht-ldp-comm}
Consider the problem in \cref{def:shtgeneric} and \cref{def:sht-ldp}, with $\cC$ equal to the set of all channels that satisfy $\epsilon$-LDP and $\cY = [\ell]$. We denote this problem by $\cB(p,q,\epsilon,\ell)$, and use $\nstar(p,q,\epsilon, \ell)$ to denote its sample complexity.
\end{definition}

Communication constraints are worth studying not only for their practical relevance in distributed inference, but also for their potential to simplify algorithms without significantly impacting performance. Indeed, the sample complexities of simple hypothesis testing with and without communication constraints are almost identical \cite{BhaNOP21,PenJL22} (cf.~\cref{fact:comm-constraints}), even for a single-bit ($\ell = 2$) communication constraint. As we explain later, a similar statement can be made for privacy constraints, as well. %

\subsection{Existing Results}
\label{sec:existing results}

As noted earlier, the problem of simple hypothesis testing with just communication constraints was addressed in Pensia, Jog, and Loh~\cite{PenJL22}. Since communication and privacy constraints are the most popular information constraints studied in the literature, the LDP-only and LDP-with-communication-constraints settings considered in this paper are natural next steps. Many of our results, particularly those on minimax-optimal sample complexity bounds, are in a similar vein as those in Pensia, Jog, and Loh~\cite{PenJL22}. Before describing our results, let us briefly mention the most relevant prior work. We discuss further related work in \Cref{sec:related_work}. 

\paragraph{Existing results on sample complexity.}
Existing results (cf.\ Duchi, Jordan, and Wainwright~\cite[Theorem 1]{DucJW18} and Asoodeh and Zhang~\cite[Theorem 2]{AsoZha22}) imply that
\begin{align}
    \label{eq:existing-samp-compl}
    \nstar(p,q,\epsilon) \gtrsim  \begin{cases}
        \frac{1}{\epsilon^2 \,\cdot\, \dtv^2(p,q)}, & \text{if } \epsilon \in (0,1], \\
        \frac{1}{e^{\epsilon} \,\cdot\,  \dtv^2(p,q) },  & \text{if } e^\epsilon \in \left(e, \frac{\hel^2(p,q)}{\dtv^2(p,q)}\right], \\
        \frac{1}{\hel^2(p,q) },  & \text{if } e^\epsilon > \frac{\hel^2(p,q)}{\dtv^2(p,q)}.
    \end{cases}
\end{align}
An upper bound on the sample complexity can be obtained by choosing a specific
private channel $\bT$ and analyzing the resulting test.
A folklore result~(see, for example, Joseph, Mao, Neel, and Roth~\cite[Theorem 5.1]{JosMNR19}) shows that
setting $\bT = \bT_{\text{RR}} \times \bT_{\text{Scheffe}}$, where
$\bT_{\text{Scheffe}}$ maps $\cX$ to $\{0,1\}$ using a threshold rule based on
$\frac{p(x)}{q(x)}$, and $\bT_{\text{RR}}$ is the binary-input
binary-output randomized response channel, gives $\nstar(p,q,\epsilon) \lesssim
	\frac{1}{\min(1, \epsilon^2)\, \cdot\, \dtv^2(p,q)}$.
This shows that when $\epsilon \in (0,1]$ (or $(0, c]$, for some constant $c$),
the lower bound is tight up to constants. Observe that for any $\ell \ge 2$, the sample complexity with privacy and communication constraints $\nstar(p, q, \epsilon, \ell)$ also satisfies the same lower and upper bounds, since the channel $\bT$ has only two outputs.%
 
However, the following questions remain unanswered:
\begin{quotation}
	\begin{center}
		\emph{What is the optimal sample complexity for $\epsilon \gg 1$?
			In particular, are the existing lower bounds~\Cref{eq:existing-samp-compl}
			tight?
			What is the threshold for free privacy?
		}
	\end{center}
\end{quotation}
In \Cref{ssub:our_results_statistical_rates}, we establish minimax-optimal bounds on the sample complexity for all values of $\epsilon$, over sets of distribution pairs with fixed total variation distance and Hellinger divergence. In particular, we show that the lower bounds~\Cref{eq:existing-samp-compl} are tight for binary distributions, but may be arbitrarily loose for general distributions.

\paragraph{Existing results on computationally efficient algorithms.}
Recall that each user needs to select a channel $\bT$ to optimize the sample complexity. Once $\bT$ is chosen, the optimal test is simply a likelihood ratio test between $\bT p$ and $\bT q$. Thus, the computational complexity lies in determining $\bT$. As noted earlier, for $\epsilon\leq 1$, the optimal channel is $\bT = \bT_{\text{RR}} \times \bT_{\text{Scheffe}}$, and this can be computed efficiently. However, this channel $\bT$ may no longer be optimal in the regime of $\epsilon \gg 1$.

As with statistical rates, prior literature on finding optimal channels for $\epsilon\gg1$ is scarce. Existing algorithms either take time exponential in the domain size~\cite{KaiOV16}, or their sample complexity is suboptimal by polynomial factors (depending on $\frac{1}{\dtv^2(p,q)}$, as opposed to $\frac{1}{\hel^2(p,q)}$).
This raises the following natural question:
\begin{quotation}
\begin{center}
\emph{Is there a polynomial-time algorithm that finds a channel $\bT$ whose sample complexity is (nearly) optimal?}
\end{center}
\end{quotation}
We answer this question in the affirmative in \Cref{ssub:results_comp}.

\subsection{Our Results}
\label{sec:our-results}

We are now ready to describe our results in this paper, which we outline in the next three subsections. In particular, \Cref{ssub:our_results_statistical_rates} focuses on the sample complexity of simple hypothesis testing under local privacy, \Cref{ssub:our_results_extreme} focuses on structural properties of the extreme points of the joint range under channel constraints, and \Cref{ssub:results_comp} states our algorithmic guarantees.

\subsubsection{Statistical Rates}
\label{ssub:our_results_statistical_rates}

We begin by analyzing the sample complexity when both $p$ and $q$ are binary distributions. We prove the following result in \Cref{sec:binary-dist-samp-comp}, showing that the existing lower bounds~\Cref{eq:existing-samp-compl} are tight for binary distributions:

\begin{restatable}[Sample complexity of binary distributions]{theorem}
{LemBinaryRRSampleComplexity}
\label{lem:binary-samp-comp}
Let $p$ and $q$ be two binary distributions. Then
\begin{align}
\label{eq:sample-comp-binary}
\nstar(p,q,\epsilon ) & \asymp
\begin{cases}
\frac{1}{\epsilon^2 \,\cdot\, \dtv^2(p,q)}, & \text{if } \epsilon \leq 1, \\
\frac{1}{e^\epsilon \,\cdot\,\dtv^2(p,q)}, & \text{if } e^\epsilon \in \left[e, \frac{\hel^2(p,q)}{\dtv^2(p,q)} \right], \\
\frac{1}{\hel^2(p,q)}, & \text{if } e^\epsilon >
\frac{\hel^2(p,q)}{\dtv^2(p,q)}.
\end{cases}
\end{align}
\end{restatable}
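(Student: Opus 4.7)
The lower bound in \eqref{eq:sample-comp-binary} follows immediately from the existing general lower bounds in \eqref{eq:existing-samp-compl}, which of course also apply to binary distributions. The work is therefore in establishing the matching upper bounds, and the plan is to show that a single channel -- the binary-to-binary randomized response channel $\RR$ with parameter $e^\epsilon$, i.e., $\RR(y \mid x) = \tfrac{e^\epsilon}{e^\epsilon+1}$ when $y=x$ and $\tfrac{1}{e^\epsilon+1}$ otherwise -- achieves the claimed bound in every one of the three regimes simultaneously. Using the likelihood ratio test on its outputs, the sample complexity is then $\asymp 1/\hel^2(\RR p,\RR q)$, so the task reduces to lower bounding this Hellinger divergence.

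First, I would normalize: write $p=(p_0,1-p_0)$, $q=(q_0,1-q_0)$, and, by relabeling $0\leftrightarrow 1$ and swapping the roles of $p$ and $q$ if necessary, assume $p_0\ge q_0$ and $p_0\le 1/2$. Set $\delta:=\dtv(p,q)=p_0-q_0$. Under this normalization, the identity
\[
\hel^2(p,q)\;\asymp\;\frac{\delta^2}{\sqrt{p_0}+\sqrt{q_0})^2} \;\asymp\; \frac{\delta^2}{p_0}
\]
(valid in the nontrivial case $p_0\gtrsim q_0$ and with the convention $\hel^2\asymp\delta^2$ when $p_0\asymp 1$) captures the ratio $\hel^2/\dtv^2\asymp 1/p_0$ that parameterizes the regime boundary. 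A direct computation gives the post-channel Bernoulli parameters
\[
a:=\P_{X\sim p}[\RR(X)=1] \;=\; \frac{1}{e^\epsilon+1}+p_0\cdot\frac{e^\epsilon-1}{e^\epsilon+1}, \qquad b:=\P_{X\sim q}[\RR(X)=1] \;=\; \frac{1}{e^\epsilon+1}+q_0\cdot\frac{e^\epsilon-1}{e^\epsilon+1},
\]
and hence $a-b=\delta\cdot\frac{e^\epsilon-1}{e^\epsilon+1}$.

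Second, I would apply the standard elementary estimate $\hel^2(\Ber(a),\Ber(b))\asymp (a-b)^2\bigl(\tfrac{1}{\max(a,b)}+\tfrac{1}{\max(1-a,1-b)}\bigr)$ and analyze the three regimes. For $\epsilon\le 1$, both $a,b\asymp 1/2$ and $\tfrac{e^\epsilon-1}{e^\epsilon+1}\asymp\epsilon$, giving $\hel^2(\RR p,\RR q)\asymp\epsilon^2\delta^2$. For $\epsilon\ge 1$, the prefactor $\tfrac{e^\epsilon-1}{e^\epsilon+1}\asymp 1$, so $a-b\asymp\delta$; moreover $\max(a,b)=a\asymp e^{-\epsilon}+p_0$ and $\max(1-a,1-b)\asymp 1$, yielding
\[
\hel^2(\RR p,\RR q)\;\asymp\;\delta^2\cdot\min\!\bigl(e^\epsilon,\,1/p_0\bigr).
\]
Combined with $\hel^2(p,q)\asymp\delta^2/p_0$, the right-hand side equals $e^\epsilon\delta^2$ when $e^\epsilon\le 1/p_0\asymp \hel^2/\dtv^2$, and equals $\hel^2(p,q)$ when $e^\epsilon> 1/p_0$, matching the claimed sample complexities in the middle and large-$\epsilon$ regimes.

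The main obstacle is not technical but conceptual: the point is to recognize that a single simple channel (randomized response, without any prior Scheff\'e reduction) simultaneously saturates all three regimes, so one does not need qualitatively different channels for different ranges of $\epsilon$. Beyond that, the remaining care is bookkeeping around boundary cases ($p_0\asymp 1$, or $q_0=0$, etc.), where the estimate $\hel^2(p,q)\asymp\delta^2/p_0$ should be replaced by $\hel^2(p,q)\asymp\delta^2$, but these can be handled by the same calculation.
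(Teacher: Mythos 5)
Your proposal is correct and follows essentially the paper's route (\Cref{prop:SDPI-binary}): pass the data through the binary randomized response and estimate the output Hellinger divergence via $\hel^2(\Ber(a),\Ber(b))\asymp (a-b)^2/\max(a,b)$ (the paper's \Cref{claim:Hellinger-binary-taylor}), with the matching lower bound taken from the known bounds \Cref{eq:existing-samp-compl} rather than from the paper's extreme-point argument showing randomized response is the exact maximizer over $\cP^\epsilon_{2,2}$. One small correction: the normalization ``$p_0\ge q_0$ and $p_0\le 1/2$'' is not always achievable (e.g.\ $p_0=0.6$, $q_0=0.4$); the achievable and sufficient assumption is $\min(p_0,q_0)\le 1/2$, under which your symmetric two-term estimate covers the remaining bookkeeping exactly as you indicate.
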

In particular, the threshold $\epsilon^*$ for free privacy (\Cref{def:free-privacy}) satisfies $e^{\epsilon^*}\asymp \frac{\hel^2(p,q)}{\dtv^2(p,q)}$.
Note that the sample complexity $\nstar(p, q, \epsilon)$ for all ranges of $\epsilon$ is completely characterized by the total variation distance and Hellinger divergence between $p$ and $q$. A natural set to consider is \emph{all distribution pairs} (not just those with binary support) with a prescribed total variation distance and Hellinger divergence; we investigate minimax-optimal sample complexity over this set. Our next result shows that removing the binary support condition radically changes the sample complexity, even if the total variation distance and Hellinger divergence are the same. Specifically, we show that there are ternary distribution pairs whose sample complexity (as a function of the total variation distance and Hellinger divergence) is significantly larger than the corresponding sample complexity for binary distributions. %
\begin{restatable}[Sample complexity lower bound for general distributions]{theorem}{LemSampCompWorstCase}
\label{lem:worst-case-samp-comp} For any $\rho \in (0,0.5)$ and $\nu \in (0,0.5)$ such that $2\nu^2 \leq \rho \leq \nu$%
, there exist ternary distributions $p$ and $q$ such that $\hel^2(p,q) = \rho$, $\dtv(p,q) = \nu$,
and the sample complexity behaves as
\begin{align}
\label{eq:worst-case-samp-comp}
\nstar(p,q,\epsilon) \asymp
\begin{cases}
\frac{1}{\epsilon^2\,\cdot\, \dtv^2(p,q)}, & \text{ if } \epsilon \leq 1, \\ \min\left( \frac{1}{\dtv^2(p,q)}, 
\frac{1}{e^\epsilon \,\cdot\, \hel^4(p,q)} \right), & \text{ if } e^\epsilon \in \left[e,
\frac{1}{\hel^2(p,q)}\right], \\ \frac{1}{\hel^2(p,q)}, & \text{ if } e^\epsilon >
\frac{1}{\hel^2(p,q)}.
\end{cases}
\end{align}
\end{restatable}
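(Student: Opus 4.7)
The plan is to realize a ternary pair with a ``rare event'' contributing to Hellinger divergence alongside a ``bulk shift'' contributing to total variation. Fix $p = (\rho,\,1/4,\,3/4-\rho)$ and $q = (0,\,1/4+\nu,\,3/4-\nu)$ on $\{0,1,2\}$. A direct computation gives $\dtv(p,q)=\nu$ and $\hel^2(p,q)\asymp\rho$ under $2\nu^2\le\rho\le\nu$: the spike at coordinate $0$ contributes $\rho$ to both divergences, while the bulk shift on $\{1,2\}$ adds $\Theta(\nu-\rho)$ to TV but only $O(\nu^2)$ to $\hel^2$. To upper-bound $\nstar$, I use three $\epsilon$-LDP channels. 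For $\epsilon\le 1$, the Scheff\'e channel onto $\{0,2\}$ composed with binary randomized response gives $\nstar\lesssim 1/(\epsilon^2\nu^2)$. For $e^\epsilon\in[e,1/\rho]$, I take the minimum of Scheff\'e+RR (giving $1/\nu^2$) and a \emph{spike-detection} channel with $\bT(0|0)=e^\epsilon/(e^\epsilon+1)$ and $\bT(0|1)=\bT(0|2)=1/(e^\epsilon+1)$; a short computation shows $\bT p(0)-\bT q(0)=\rho(e^\epsilon-1)/(e^\epsilon+1)$ while $\bT p(0),\bT q(0)\asymp 1/e^\epsilon$, yielding $\hel^2(\bT p,\bT q)\asymp e^\epsilon\rho^2$ and $\nstar\lesssim 1/(e^\epsilon\rho^2)$. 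For $e^\epsilon>1/\rho$, the $k$-ary randomized response channel $\bT(y|x)\propto e^\epsilon\1\{y=x\}+\1\{y\ne x\}$ produces $\hel^2(\bT p,\bT q)\asymp\rho$.

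\textbf{Lower bound.} The technical core is the per-channel estimate $\hel^2(\bT p,\bT q)\lesssim \min(1,\epsilon^2)\nu^2 + e^\epsilon\rho^2$ for every $\epsilon$-LDP $\bT$; together with the data-processing bound $\hel^2(\bT p,\bT q)\le\hel^2(p,q)\asymp\rho$, this matches the upper bound in each regime. Writing $a_y,b_y,c_y$ for $\bT(y|0),\bT(y|1),\bT(y|2)$, the construction gives
\begin{align*}
\bT p(y) - \bT q(y) = \rho\,(a_y-c_y) + \nu\,(c_y-b_y), \qquad \bT p(y) + \bT q(y) \gtrsim b_y+c_y.
\end{align*}
Applying $(\sqrt{u}-\sqrt{v})^2\le(u-v)^2/(u+v)$ followed by $(x+y)^2\le 2(x^2+y^2)$ splits the estimate into a spike piece $\rho^2\sum_y(a_y-c_y)^2/(b_y+c_y)$ and a bulk piece $\nu^2\sum_y(c_y-b_y)^2/(b_y+c_y)$. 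The bulk piece is bounded by $2\nu^2$ via $(c_y-b_y)^2\le|c_y-b_y|(b_y+c_y)$ and $\sum_y|b_y-c_y|\le 2$; when $\epsilon\le 1$, invoking the LDP ratio bound on the bulk instead sharpens this to $\epsilon^2\nu^2$. For the spike piece, I apply $\epsilon$-LDP in the form $|a_y-c_y|\le(e^\epsilon-1)\min(a_y,c_y)\le(e^\epsilon-1)(b_y+c_y)$ to \emph{only one} of the two $(a_y-c_y)$ factors, obtaining $\rho^2(e^\epsilon-1)\sum_y|a_y-c_y|\le 2\rho^2(e^\epsilon-1)\lesssim e^\epsilon\rho^2$ for $\epsilon\ge 1$.

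\textbf{Main obstacle.} The delicate point is precisely this asymmetric use of LDP on the spike piece. A symmetric estimate $|a_y-c_y|^2\le(e^\epsilon-1)^2\min(a_y,c_y)^2$ would yield $\rho^2(e^\epsilon-1)^2\lesssim e^{2\epsilon}\rho^2$, which fails to match the upper bound whenever $e^\epsilon\gg 1$ (in particular it would give an empty lower bound once $e^{2\epsilon}\rho^2\gtrsim\rho$, i.e.\ as soon as $e^\epsilon\gtrsim\rho^{-1/2}$, long before the data-processing bound kicks in at $e^\epsilon\asymp 1/\rho$). Factoring one $(a_y-c_y)$ via LDP and telescoping the remaining $\sum_y|a_y-c_y|\le 2$ by the trivial TV-of-rows bound saves the crucial factor of $e^\epsilon$; the analogous issue on the bulk piece is resolved oppositely, by \emph{not} invoking LDP when $\epsilon\ge 1$ so as to retain the $\nu^2$ rate rather than $e^\epsilon\nu^2$.
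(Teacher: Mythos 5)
Your construction and lower-bound technique are sound, but the route is genuinely different from the paper's, so it is worth comparing. The paper constructs $p=[0,1/2,1/2]$ and $q=[2\gamma^{1+\delta},1/2+\gamma-\gamma^{1+\delta},1/2-\gamma-\gamma^{1+\delta}]$ with $\gamma,\delta$ chosen by an intermediate-value argument so that $\dtv=\nu$ and $\hel^2=\rho$ \emph{exactly}; it then reduces to output alphabet $[3]$ via \Cref{fact:id-channel} and computes $\hel^2(\bT p,\bT q)$ case-by-case over the explicitly classified extreme points of $\cP^\epsilon_{3,3}$ (from Holohan, Leith, Mason). Your lower bound is a direct per-channel estimate: write $\bT p(y)-\bT q(y)=\rho(a_y-c_y)+\nu(c_y-b_y)$, control $\hel^2$ via $(\sqrt u-\sqrt v)^2\le(u-v)^2/(u+v)$, split into a spike piece and a bulk piece, and then apply the LDP ratio constraint asymmetrically (once on the spike piece, not at all on the bulk piece) when $\epsilon\gtrsim1$. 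This avoids any appeal to the extreme-point classification of the LDP polytope and applies directly to channels of arbitrary output size, which is a genuine simplification; what it gives up is the explicit identification of the maximizing channel that the paper's case analysis yields as a byproduct. Your upper bound channels are essentially the same as the paper's (Scheff\'e plus randomized response, and a spike-detection channel which is the randomized response restricted to an indicator of the rare symbol).

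Two gaps remain, both repairable. First, your construction gives $\dtv(p,q)=\nu$ exactly but only $\hel^2(p,q)\asymp\rho$, because the ``bulk'' coordinates contribute an extra $\Theta(\nu^2)$; the theorem asserts $\hel^2(p,q)=\rho$ exactly. You need a one-line continuity/IVT argument (as the paper does in \Cref{sec:valid-choice-worst-case}) replacing the spike mass $\rho$ by a slightly adjusted value $\tilde\rho\in[\rho/2,\rho]$ so that the total Hellinger divergence hits $\rho$ on the nose. Second, your stated target bound $\hel^2(\bT p,\bT q)\lesssim\min(1,\epsilon^2)\nu^2+e^\epsilon\rho^2$ is not strong enough in the $\epsilon\le1$ regime: for small $\epsilon$ and $\rho$ close to $\nu$ it only gives $\lesssim\nu^2$, whereas the claim requires $\lesssim\epsilon^2\nu^2$. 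The fix is symmetric to the one you describe for the bulk piece: when $\epsilon\le1$ you should apply the LDP bound \emph{twice} on the spike piece (giving $(e^\epsilon-1)^2\rho^2\asymp\epsilon^2\rho^2\le\epsilon^2\nu^2$), so the correct unified target is $\min(1,\epsilon^2)\nu^2+\min(\epsilon^2,e^\epsilon)\rho^2$, or simply cite the known high-privacy bound \Cref{fact:sample-complexity-eps-small} for $\epsilon\le1$ as the paper does. Your ``main obstacle'' discussion shows you understand this asymmetry, but the formula as written does not encode it.
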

We prove this result in \Cref{sec:worst-case-samp-comp-lwr-bds}.
\begin{remark}
\label{rem:binary-vs-worst}
We highlight the differences between the sample complexity in the binary setting (cf.\ equation~\Cref{eq:sample-comp-binary}) and the worst-case general distributions (cf.\ equation~\Cref{eq:worst-case-samp-comp}) below (also see \Cref{fig:sample-complexity}):

\begin{enumerate}
\item (Relaxing privacy may not lead to significant improvements in accuracy.) In equation~\Cref{eq:worst-case-samp-comp}, there is an arbitrarily large range of $\epsilon$ where the sample complexity remains roughly constant.
In particular, when $e \leq e^\epsilon \lesssim \frac{\dtv^2(p,q)}{\hel^4(p,q)}$, the sample complexity of hypothesis testing remains roughly the same (up to constants). That is, we are sacrificing privacy without any significant gains in statistical efficiency. This is in stark contrast to the binary setting, where increasing $e^\epsilon$ by a large constant factor leads to a constant-factor improvement in sample complexity.
\item (The threshold for free privacy is larger.) Let $\epsilon^* := \epsilon(p,q)$ be the threshold for free privacy (cf. \Cref{def:free-privacy}). In the binary setting, one has $e^{\epsilon^*} \asymp \frac{\hel^2(p,q)}{\dtv^2(p,q)}$, whereas for general distributions, one may need $e^{\epsilon^*} \gtrsim \frac{1}{\hel^2(p,q)}$. The former $\epsilon^*$ can be arbitrarily smaller than the latter.
\end{enumerate}
\end{remark}

To complement the result above, which provides a lower bound on the sample complexity for worst-case distributions, our next result provides an upper bound on the sample complexity that nearly
matches the rates (up to logarithmic factors) for arbitrary distributions. Moreover, the proposed algorithm uses an $\epsilon$-LDP channel with \emph{binary} outputs. The following result is proved in \Cref{sec:gen-dist-samp-comp-upr-bds}:

\begin{restatable}[Sample complexity upper bounds and an efficient algorithm for
		hypothesis testing for general distributions]
	{theorem}{LemUppBoundCommEff}
	\label{thm:samp-comp-uprbds}
	Let $p$ and $q$ be two distributions on $[k]$.
	Let $\epsilon > 0$.
	Then the sample complexity behaves as
	\begin{align}
		\nstar(p,q,\epsilon) &\lesssim
		\begin{cases}
			\frac{1}{\epsilon^2\,\cdot\, \dtv^2(p,q)}, & \text{ if } \epsilon \leq 1, \\ \min\left(
			\frac{1}{\dtv^2(p,q)}, \frac{
					\alpha^2}{e^\epsilon \,\cdot\, \hel^4(p,q)} \right), & \text{ if } e^\epsilon \in \left(e,
				\frac{\alpha}{\hel^2(p,q)}\right], \\
				\frac{\alpha}{\hel^2(p,q)}, & \text{ if } e^\epsilon
			> \frac{\alpha}{\hel^2(p,q)},
		\end{cases}
	\end{align}
	where $\alpha \lesssim \log(1/\hel^2(p,q)) \asymp \log\left( \nstar(p,q)
		\right)$.

	Moreover, the rates above are achieved by an $\epsilon$-LDP channel $\bT$ that
	maps $[k]$ to $[2]$ and can be found in time polynomial in $k$, for any
	choice of $p$, $q$, and $\epsilon$.
\end{restatable}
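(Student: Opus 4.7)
The plan is to reduce every regime of $\epsilon$ to the binary case already controlled by \Cref{lem:binary-samp-comp}. Specifically, I would exhibit an explicit $\epsilon$-LDP channel $\bT\colon[k]\to[2]$ obtained by first projecting $[k]$ onto a binary outcome via a deterministic $\{0,1\}$-valued test based on the likelihood ratio, and then composing with the $\epsilon$-LDP binary randomized response. Given such a $\bT$, the problem of testing $p$ versus $q$ under $\epsilon$-LDP reduces to testing the induced Bernoullis $\bT p,\bT q$ under $\epsilon$-LDP, for which \Cref{lem:binary-samp-comp} gives sharp sample-complexity bounds in terms of $\dtv(\bT p,\bT q)$ and $\hel^2(\bT p,\bT q)$. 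The algorithm will search over a polynomial-sized family of such binary projections and pick the one yielding the smallest sample complexity.

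For $\epsilon\le 1$, I would use the folklore construction with the Scheff\'e projection $\bT_{\text{sch}}(x)=\1\{p(x)\ge q(x)\}$: since total variation is preserved exactly, the small-$\epsilon$ clause of \Cref{lem:binary-samp-comp} immediately yields $\lesssim 1/(\epsilon^2\dtv^2(p,q))$. For $\epsilon>1$ I would use two candidate families. The first is again $\bT_{\text{sch}}$; its induced Bernoullis have TV $\dtv(p,q)$ and Hellinger $\gtrsim\dtv^2(p,q)$, so the large-$\epsilon$ clause of \Cref{lem:binary-samp-comp} gives sample complexity $\lesssim 1/\dtv^2(p,q)$ uniformly in $\epsilon>1$, matching the first term of the $\min$.

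The second family is based on a logarithmic partition of $[k]$. I would split the domain into $m=O(\alpha)$ groups $A_j=\{x:\sqrt{p(x)/q(x)}\in[2^j,2^{j+1})\}$ (and symmetric groups for $p(x)<q(x)$), where $\alpha$ scales like $\log(1/\hel^2(p,q))$. Since $\hel^2(p,q)=\tfrac12\sum_x(\sqrt{p(x)}-\sqrt{q(x)})^2$ decomposes additively across the groups, by averaging one group $A_{j^*}$ carries contribution $\gtrsim\hel^2(p,q)/\alpha$. Taking $\bT(x)=\1\{x\in A_{j^*}\}$: within $A_{j^*}$ the ratio $p(x)/q(x)$ is roughly constant ($\asymp 4^{j^*}$), so the induced Bernoullis $(a,1-a),(b,1-b)$ are skewed and satisfy $(\sqrt{a}-\sqrt{b})^2\asymp\nu_{\text{bin}}\asymp\hel_{\text{bin}}^2\gtrsim\hel^2(p,q)/\alpha$. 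Feeding these parameters into the moderate- and large-$\epsilon$ clauses of \Cref{lem:binary-samp-comp} yields the claimed $\lesssim\alpha^2/(e^\epsilon\hel^4(p,q))$ and $\lesssim\alpha/\hel^2(p,q)$ bounds respectively, matching the remaining cases. Algorithmically, there are only $O(k)$ distinct one-sided threshold channels of the form $\1\{p(x)/q(x)>t\}$ and $O(\log(1/\hel^2(p,q)))$ two-sided partition channels to search over; for each I compute $\dtv,\hel^2$ of the induced Bernoullis in $O(k)$ time and score it via \Cref{lem:binary-samp-comp}, so the entire search runs in $\poly(k)$ time.

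The hard part is the logarithmic-partition claim itself. Two subtleties need to be addressed. First, the partition must be truncated to $O(\log(1/\hel^2(p,q)))$ effectively-contributing groups: the ``inner tail'' of bins with $|\sqrt{p(x)/q(x)}-1|$ extremely small contributes at most $2^{-2J}\lesssim\hel^2(p,q)$ to Hellinger when $J\asymp\log(1/\hel^2(p,q))$, while the ``outer tail'' with $p(x)/q(x)$ very large is controlled using $f(x)\gtrsim p(x)$ in that regime together with $\sum_x p(x)\le 1$; these two tail estimates together pin down $\alpha\asymp\log(1/\hel^2(p,q))\asymp\log\nstar(p,q)$, at which point a pigeonhole over the remaining $O(\alpha)$ bins yields the desired group $A_{j^*}$. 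Second, one must argue that $\hel^2(\bT p,\bT q)$ for the single-group projection matches the in-group Hellinger contribution up to constants: the term $(\sqrt{a}-\sqrt{b})^2$ does so directly because the ratio $p/q$ is nearly constant on $A_{j^*}$, while the secondary term $(\sqrt{1-a}-\sqrt{1-b})^2$ requires an elementary bound showing it is at most comparable to the first term for skewed Bernoullis, after which \Cref{lem:binary-samp-comp} is invoked as a black box.
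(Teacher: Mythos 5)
Your reduction --- binary projection composed with binary randomized response, then invoke the binary sample-complexity result --- is the same skeleton the paper uses. The only substantive difference is how the Hellinger-preserving binary projection is obtained. The paper treats this as a black box: it invokes \cref{fact:comm-constraints} to get a polynomial-time binary channel $\bT'$ with $\hel^2(\bT'p,\bT'q)\gtrsim\hel^2(p,q)/\alpha$, composes with $\RR^\epsilon$, and applies \cref{prop:SDPI-binary} together with the universal bound $\dtv\geq\hel^2/2$ from \cref{fact:tv-hel}. You instead sketch a re-derivation of that compression fact via logarithmic binning of the likelihood ratio. The outline is right, but it leaves a genuine gap: the pigeonhole can land on the innermost bin, where $p_i/q_i$ is within a constant factor of $1$, and there the aggregated Bernoulli Hellinger $(\sqrt{a}-\sqrt{b})^2$ can undershoot the in-group contribution $\sum_{i\in A_{j^*}}(\sqrt{p_i}-\sqrt{q_i})^2$ by an arbitrarily large factor. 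Concretely, place most of the group's $q$-mass at ratio exactly $1$ and a thin slice at ratio near $2$: the group's Hellinger is carried entirely by the slice, yet aggregating washes it out, since the aggregated ratio $a/b$ is driven close to $1$. So "aggregation preserves in-group Hellinger because the ratio is roughly constant" is true for $j^*\geq 1$ but false for the comparable bin; \cref{fact:comm-constraints} (whose proof in Pensia--Jog--Loh handles comparable and non-comparable elements as separate cases) is precisely what closes this. A second, smaller issue: you assert $\dtv(\bT p,\bT q)\asymp\hel^2(\bT p,\bT q)$ for the induced Bernoullis, which again fails on the comparable bin. Luckily you only use the one-sided $\dtv\gtrsim\hel^2$, which holds universally by \cref{fact:tv-hel}, so that is an over-claim rather than an error. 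Net assessment: the route is the paper's route; replace your partition sketch with a direct citation of \cref{fact:comm-constraints} and the proof is complete, whereas as written the comparable-elements case is unresolved.
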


\Cref{lem:worst-case-samp-comp,thm:samp-comp-uprbds} imply that the above sample complexity is minimax optimal (up to logarithmic factors) over the class of distributions with total variation distance $\nu$ and Hellinger divergence $\rho$ satisfying the conditions in \Cref{lem:worst-case-samp-comp}. We summarize this in the following theorem:

\begin{theorem}[Minimax-optimal bounds]
\label{thm:minmax-optimal-bounds}
Let $\rho \in (0,0.5)$ and $\nu \in (0,0.5)$ be such that $2\nu^2 \leq \rho \leq \nu$. Let $S_{\rho, \nu}$ be the set of all distribution pairs with discrete supports, with total variation distance and Hellinger divergence being $\nu$ and $\rho$, respectively:
$$S_{\rho, \nu} := \{(p, q): k \in \mathbb N, p \in \Delta_k, q \in \Delta_k, \dtv(p,q) = \nu, \hel^2(p,q) = \rho\}.$$ 
Let $\nstar(S_{\rho, \nu}, \epsilon)$  be the minimax-optimal sample complexity of hypothesis testing under $\epsilon$-LDP constraints, defined as 
\begin{align*}
\nstar(S_{\rho, \nu}, \epsilon) = \min_{(\phi, \cR)} \max_{(p, q) \in S_{\rho, \nu}} \nstar(p, q, \epsilon),
\end{align*}
for test-rule pairs $(\phi, \cR)$, as defined in \Cref{def:sht-ldp}. Then
\begin{align}
\label{eq:minimax-optimal-samp-comp}
\nstar(S_{\rho, \nu},\epsilon) = 
\begin{cases}
\widetilde \Theta \left(\frac{1}{\epsilon^2\,\cdot\, \nu^2} \right), & \text{ if } \epsilon \leq 1, \\
\widetilde \Theta \left(\min\left( \frac{1}{\nu^2}, 
\frac{1}{e^\epsilon \,\cdot\, \rho^2} \right) \right), & \text{ if } e^\epsilon \in \left[e,
\frac{1}{\rho}\right], \\ 
\widetilde \Theta \left(\frac{1}{\rho} \right), & \text{ if } e^\epsilon >
\frac{1}{\rho}.
\end{cases}
\end{align}
Here, the $\widetilde \Theta$ notation hides poly-logarithmic factors in $1/\nu$ and $1/\rho$. 
\end{theorem}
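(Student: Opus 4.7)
The plan is to derive \Cref{thm:minmax-optimal-bounds} as an immediate consequence of \Cref{lem:worst-case-samp-comp} and \Cref{thm:samp-comp-uprbds}, since those two results are already stated in matching forms parameterized by $\nu = \dtv(p,q)$ and $\rho = \hel^2(p,q)$. Under the given interpretation, $\nstar(S_{\rho,\nu},\epsilon)$ is just the supremum of $\nstar(p,q,\epsilon)$ over $(p,q) \in S_{\rho,\nu}$ (the inner quantity already minimizes over $(\phi,\cR)$), so I split into upper and lower bounds.

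For the upper bound, I would fix an arbitrary $(p,q) \in S_{\rho,\nu}$ and directly apply \Cref{thm:samp-comp-uprbds}. Since $\hel^2(p,q) = \rho$ and $\dtv(p,q) = \nu$, the three-regime bound there becomes a bound depending only on $\rho$, $\nu$, and $\epsilon$, with the extra factor $\alpha \lesssim \log(1/\rho)$. Taking $\sup_{(p,q) \in S_{\rho,\nu}}$ preserves this bound since the right-hand side is independent of the particular pair, and absorbing $\alpha$ into the $\widetilde{\Theta}$ notation yields the stated upper bound in each of the three ranges of $\epsilon$.

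For the lower bound, I would invoke \Cref{lem:worst-case-samp-comp}, whose hypothesis $2\nu^2 \le \rho \le \nu$ is precisely the assumption of \Cref{thm:minmax-optimal-bounds}. That lemma exhibits a specific ternary pair $(p_0,q_0)$ satisfying $\hel^2(p_0,q_0) = \rho$ and $\dtv(p_0,q_0) = \nu$, hence $(p_0,q_0) \in S_{\rho,\nu}$, for which $\nstar(p_0,q_0,\epsilon)$ matches the claimed rates in $\rho,\nu,\epsilon$ up to universal constants (with no logarithmic slack). Since $\nstar(S_{\rho,\nu},\epsilon) \ge \nstar(p_0,q_0,\epsilon)$, this gives the matching $\widetilde{\Theta}$ lower bound.

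The only bookkeeping point to verify is that the regime boundaries in \Cref{thm:samp-comp-uprbds} (the middle range being $e^\epsilon \in (e, \alpha/\rho]$) and in \Cref{lem:worst-case-samp-comp} (the middle range being $e^\epsilon \in [e, 1/\rho]$) are compatible once $\alpha = \polylog(1/\rho)$ factors are hidden. For $e^\epsilon$ in the transitional interval $[1/\rho, \alpha/\rho]$, both bounds evaluate to $1/\rho$ up to a $\polylog(1/\rho)$ factor (using $\rho \ge 2\nu^2$ to see that $1/\nu^2 \ge 2/\rho$, so the $\min$ in \eqref{eq:worst-case-samp-comp} is attained by the second term), which is absorbed into $\widetilde{\Theta}$. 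There is no genuine obstacle here; the content of the theorem lies entirely in \Cref{lem:worst-case-samp-comp,thm:samp-comp-uprbds}, and this proof is essentially a repackaging.
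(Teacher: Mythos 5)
Your proposal is correct and is essentially the paper's own argument: the paper introduces \Cref{thm:minmax-optimal-bounds} with the sentence ``\Cref{lem:worst-case-samp-comp,thm:samp-comp-uprbds} imply that the above sample complexity is minimax optimal (up to logarithmic factors)\dots We summarize this in the following theorem,'' and offers no further proof, so your combination of the ternary lower-bound construction with the efficient upper bound, together with the regime-boundary bookkeeping using $\alpha \lesssim \log(1/\rho)$, is exactly what is intended.
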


\begin{remark}
A version of the above theorem may also be stated for privacy and communication constraints, by defining 
\begin{align*}
\nstar(S_{\rho, \nu}, \epsilon, \ell) = \min_{(\phi, \cR)} \max_{(p, q) \in S_{\rho, \nu}} \nstar(p, q, \epsilon, \ell).
\end{align*}
In fact, it may seen that the same sample complexity bounds continue to hold for $\nstar(S_{\rho, \nu}, \epsilon, \ell)$, with $\ell \ge 2$,
since the lower bound in \Cref{lem:worst-case-samp-comp} continues to hold with communication constraints, as does the upper bound in \Cref{thm:samp-comp-uprbds}, which uses a channel with only binary outputs. %
\end{remark}

\begin{remark}
The above theorem mirrors a minimax optimality result for communication-constrained hypothesis testing from Pensia, Jog, and Loh~\cite{PenJL22}. There, the set under consideration was $S_\rho$, where $\rho$ is the Hellinger divergence between the distribution pair, and the minimax-optimal sample complexity was shown to be $ \widetilde \Theta (1/\rho)$ even for a binary communication constraint. 
\end{remark}

Finally, we consider the { threshold for free privacy} $\epsilon^*$ for general
distributions; see \Cref{def:free-privacy}. Observe that \Cref{thm:samp-comp-uprbds} does not provide any upper bounds on $\epsilon^*$, since the sample complexity in \Cref{thm:samp-comp-uprbds} is bounded away from $\nstar(p,q)$, due to the logarithmic multiplier $\alpha$.
Recall that \Cref{lem:worst-case-samp-comp} implies $e^{\epsilon^*} \gtrsim \frac{1}{\hel^2(p,q)}$ in the worst case. Our next result, proved in \Cref{sec:gen-dist-samp-comp-upr-bds}, shows that
this is roughly tight, and $e^{\epsilon^*} \lesssim \frac{1}{\hel^2(p,q)} \cdot
	\log\left(\frac{1}{\hel^2(p,q)}\right)$ for all distributions:
\begin{restatable}[]{theorem}{LemFreePrivacy}
	\label{lem:privacy-free}
	Let $p$
	and $q$ be two distributions on $[k]$,
	and let $e^\epsilon \gtrsim \frac{1}{\hel^2(p,q)}
		\log\left(\frac{1}{\hel^2(p,q)}\right)$.
	Then $\nstar(p,q,\epsilon) \asymp \nstar(p,q)$.
	Moreover, there is a channel $\bT$ achieving this sample complexity that maps
	$[k]$ to a domain of size $\lceil \log(\nstar(p,q)) \rceil$, and which can be
	computed in $\poly(k, \log(\lceil \nstar(p,q) \rceil))$ time.
\end{restatable}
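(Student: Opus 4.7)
The lower bound $\nstar(p,q,\epsilon) \gtrsim \nstar(p,q)$ is immediate from the data processing inequality, so the task is to construct, in $\poly(k, \log \nstar(p,q))$ time, an $\epsilon$-LDP channel $\bT$ into an alphabet of size $L = \lceil \log \nstar(p,q) \rceil$ satisfying $\hel^2(\bT p, \bT q) \gtrsim \hel^2(p,q)$. The plan is a two-stage channel $\bT = \bT_{\text{noise}} \circ \bS$: a deterministic coarsening $\bS : [k] \to [L]$ that preserves Hellinger divergence up to constants, followed by uniform-noise smoothing $\bT_{\text{noise}}$ that enforces $\epsilon$-LDP.

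Write $h^2 := \hel^2(p,q)$ and $r(x) := p(x)/q(x)$. Pick $J = \Theta(\log(1/h^2))$ and partition $[k]$ by
\begin{equation*}
B_j := \{x : 2^j \leq r(x) < 2^{j+1}\} \quad (j \in \{-J,\ldots,J-1\}), \quad B_{+\infty} := \{x : r(x) \geq 2^J\}, \quad B_{-\infty} := \{x : r(x) < 2^{-J}\},
\end{equation*}
giving $L = 2J + 2 = O(\log \nstar(p,q))$, and let $\bS$ send each atom to its bucket label. I would prove $\hel^2(\bS p, \bS q) \asymp \hel^2(p,q)$ as follows. Choosing $J$ so that $2^{-J} \leq c\, h^2$ forces $(1 - \sqrt{q(x)/p(x)})^2 \gtrsim 1$ on $B_{+\infty}$, hence $p(B_{+\infty}) \lesssim h^2$ and the Hellinger contribution of $B_{+\infty}$ is captured up to constants by its single aggregated symbol (symmetrically for $B_{-\infty}$). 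For each non-extreme bucket $B_j$, since $r(x)$ varies by at most a factor of $2$, a direct comparison of $\sum_{x \in B_j}(\sqrt{p(x)}-\sqrt{q(x)})^2$ and $(\sqrt{p(B_j)}-\sqrt{q(B_j)})^2$ against the common proxy $q(B_j)(\sqrt{2^j}-1)^2$ yields the claimed equivalence.

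Next, define $\bT(y \mid x) := (1-\gamma)\, \1\{y = \bS(x)\} + \gamma/L$ with $\gamma := L / e^\epsilon$. A direct check gives $\bT(y \mid x)/\bT(y \mid x') \leq L/\gamma = e^\epsilon$, so $\bT$ is $\epsilon$-LDP. Writing $\bT p = (1-\gamma)\bS p + \gamma u_L$ with $u_L$ uniform on $[L]$, the bound $\hel^2(\bT p, \bS p) \leq \dtv(\bT p, \bS p) \leq \gamma$ combined with the Hellinger triangle inequality gives
\begin{equation*}
\hel(\bT p, \bT q) \;\geq\; \hel(\bS p, \bS q) - 2\sqrt{\gamma}.
\end{equation*}
The hypothesis $e^\epsilon \gtrsim \log(1/h^2)/h^2$ yields $\gamma \lesssim h^2$, so combining with Step 1 produces $\hel^2(\bT p, \bT q) \gtrsim h^2$. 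Running the non-private likelihood-ratio test on $Y_i = \bT(X_i)$ then solves $\cB(p,q,\epsilon)$ with $n = O(\nstar(p,q))$ samples, and $\bT$ is constructed in $\poly(k, L)$ time by sorting the atoms of $[k]$ by $r(x)$ and assigning bucket labels.

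The main obstacle I foresee is the tight bucketing lemma in Step 1, especially for buckets near $r(x) = 1$ where $(\sqrt{r}-1)^2$ vanishes and a naive factor-$2$ argument can fail in the reverse direction: a small mass of atoms with $r$ just above $1$ can make the bucketed Hellinger contribution vanish while the original contribution remains positive. The two natural remedies are (i) to invoke directly the bucketing lemma of \cite{PenJL22} developed for communication-constrained testing, which treats exactly the same data-processing question, or (ii) to refine the middle buckets on a finer additive scale in $\log r$, using that the hypothesis on $\epsilon$ tolerates a constant-factor increase in $L$ without violating $\gamma \lesssim h^2$.
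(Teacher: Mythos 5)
Your overall architecture is sound and, for the noise stage, genuinely different from (and simpler than) the paper's argument. The paper also uses a two-stage channel --- the Hellinger-preserving compression of Pensia--Jog--Loh into $\ell \asymp \log(1/\hel^2(p,q))$ outputs followed by the $\ell$-ary randomized response --- but it analyzes the second stage by an element-wise strong data processing argument (\Cref{lem:k-ar-rr-general}), tracking how the randomized response acts on ``comparable elements'' with likelihood ratio in $[1/2,2]$. That finer analysis is needed because the paper actually proves the stronger \Cref{lem:k-ary-rand-resp-low-priv}, valid for all $\epsilon>1$. Your perturbative argument --- $\dtv(\bT p,\bS p)\le\gamma$, hence $\hel(\bT p,\bS p)\le\sqrt{2\gamma}$, plus the triangle inequality for the Hellinger metric --- exploits the free-privacy hypothesis $e^\epsilon\gtrsim \log(1/\hel^2)/\hel^2$ to make $\sqrt{\gamma}$ a small constant multiple of $\hel(p,q)$, and this is entirely sufficient for the theorem as stated. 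It buys a much shorter proof at the cost of not recovering the intermediate-$\epsilon$ trade-off. The privacy check, the $\gamma<1$ sanity check, and the reduction of the testing claim to $\hel^2(\bT p,\bT q)\gtrsim\hel^2(p,q)$ via \Cref{fact:id-channel} are all correct.

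The one soft spot is exactly the one you flag: the compression step. Your dyadic bucketing in $r(x)=p(x)/q(x)$ does fail near $r=1$ for the reason you give, and your remedy (ii) as stated (an additive refinement in $\log r$) does not repair it with only $O(\log(1/\hel^2))$ buckets: to make the within-bucket contributions comparable one needs geometric (factor-2) buckets in $|r(x)-1|$ (equivalently in $|\sqrt{r(x)}-1|$), down to scale $\asymp \hel(p,q)$, after which the residual atoms can be merged since their total contribution is $O(\hel^2(p,q))$ with a small constant. Remedy (i) is the right move and is what the paper itself does: \Cref{fact:comm-constraints} with $\ell=\lceil\log(1/\hel^2(p,q))\rceil$ gives precisely $\hel^2(\bS p,\bS q)\gtrsim\hel^2(p,q)$ with a polynomial-time channel, and with that substitution your proof is complete.
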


We thereby settle the question of minimax-optimal sample complexity (up to logarithmic factors) for simple binary hypothesis testing under LDP-only and LDP-with-communication constraints (over the class of distributions with a given total variation distance and Hellinger divergence). Moreover, the minimax-optimal upper bounds are achieved by computationally efficient, communication-efficient algorithms. However, there can be a wide gap between instance-optimal and minimax-optimal procedures; in the next two subsections, we present structural and computational results for instance-optimal algorithms.

\begin{figure}[H]
    \includegraphics[width=0.8\textwidth]{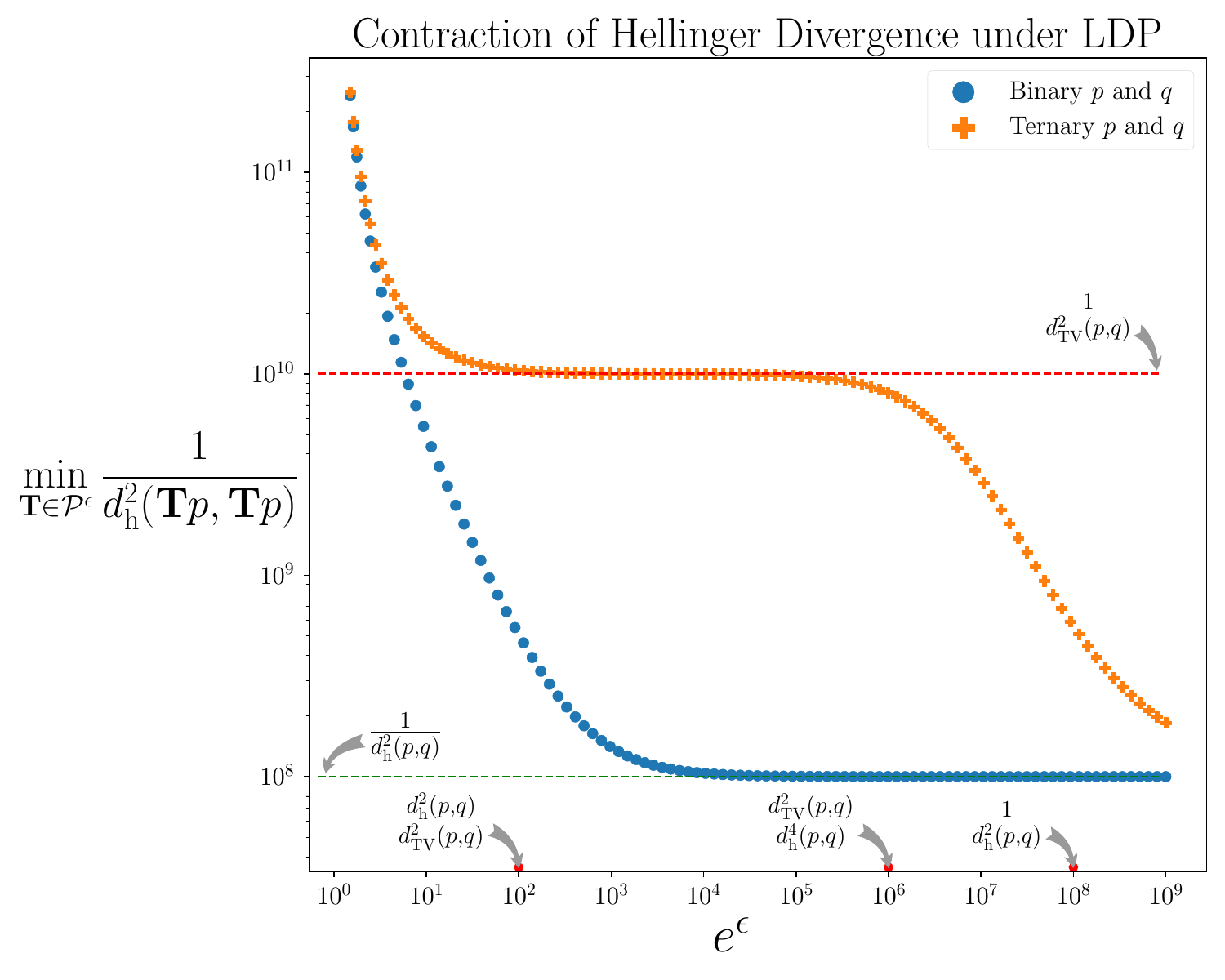}
    \caption{
    In this plot, we show the difference between the behavior of sample complexity under $\epsilon$-LDP constraints for binary distributions and (worst-case) ternary distributions from \Cref{lem:worst-case-samp-comp}.
    We take two pairs of distributions $(p,q)$---one pair of binary distributions (shown in blue, with marker $\circ$) and one pair of ternary distributions (shown in orange, with marker $+$)---such that the two pairs have Hellinger divergence $\hel^2(p,q) = 10^{-8}$ and total variation distance $\dtv(p,q) = 10^{-5}$.
    For each value of $\epsilon$, shown on the horizontal axis after being mapped to $e^\epsilon$, we compute $\min_{\bT \in \cP^\epsilon} 1/\hel^2(\bT p, \bT q)$, where $\cP^\epsilon$ is the set of all $\epsilon$-LDP channels, and plot it on vertical axis.
    Thus, the vertical axis characterizes the sample complexity $\nstar(p,q,\epsilon)$ of simple binary hypothesis testing between $p$ and $q$ with privacy constraints, up to constant factors (cf.~\Cref{fact:id-channel}). 
    Both axes are shown in $\log$-scale here. 
    Since the total variation distance between the two pairs is identical, we see that their curves overlap for small $\epsilon$ ($\epsilon \ll1$, which is consistent with the fact that $\nstar(p,q,\epsilon) \asymp \frac{1}{\epsilon^2 \dtv^2(p,q)}$ for small $\epsilon$).
    As predicted by \Cref{lem:binary-samp-comp}, the curve for binary distributions decreases rapidly for $\epsilon\gg 1$ until it saturates at $1/\hel^2(p,q)$. 
    Moreover, for $e^{\epsilon}\asymp \hel^2(p,q)/\dtv^2(p,q)$, the predicted threshold for free privacy, the vertical axis is within constant factors of its asymptotic value, as predicted.
    On the other hand, the curve for ternary distributions seems to have three different phases, as predicted by \Cref{lem:worst-case-samp-comp}: (i) for small $\epsilon$, it behaves as $1/(\epsilon^2\dtv^2(p,q))$; (ii) for moderate values of $\epsilon$,  such that $e \ll e^{\epsilon} \ll \frac{\dtv^2(p,q)}{\hel^4(p,q)}$, it remains stagnant roughly at $\frac{1}{\dtv^2(p,q)}$; and (iii) for $e^\epsilon \gg \frac{\dtv^2(p,q)}{\hel^4(p,q)}$, the curve decreases rapidly until it approaches $1/\hel^2(p,q)$.
    The phase (ii) corresponds to the phenomenon that we are leaking privacy without any gains in statistical efficiency.
    Finally, $e^\epsilon$ needs to be as large as $1/\hel^2(p,q)$ for the vertical axis to be within a factor of $10$ of its asymptotic value.
    We refer the reader to \Cref{rem:binary-vs-worst} for more details.} 
	\label{fig:sample-complexity}
\end{figure}
\subsubsection{Structure of Extreme Points under the Joint Range}
\label{ssub:our_results_extreme}

In this section, we present results for the extreme points of the joint range
of an arbitrary pair of distributions when transformed by a set of channels.
Formally, if $\cC$ is a convex set of channels from $\cX$ to $\cY$, and $p$ and
$q$ are two distributions on $\cX$, we are interested in the extreme points of
the set $\cA := \{(\bT p, \bT q): \bT \in \cC\}$, which is a convex subset of
$\Delta_{|\cY|} \times \Delta_{|\cY|}$.\new{\footnote{For $k \in \N$, we use $\Delta_{k}$ to denote the probability simplex on a domain of alphabet size $k$.}} Recall that a point $a \in \cA$ is said to be an extreme point if $a$ cannot be expressed as a convex combination of two distinct points in $\cA$; i.e., if $a = \lambda b + (1-\lambda) c$ for $\lambda \in (0,1)$, then $b=c~(=a)$. The extreme points of a convex set are naturally insightful for maximizing
quasi-convex functions, and we will present the consequences of the results in
this section in \Cref{ssub:results_comp}.

We consider two choices of $\cC$: first, when $\cC$ is the set of all channels from $\cX$ to $\cY = [\ell]$, and second, when $\cC$ is the set of all $\epsilon$-LDP channels from $\cX$ to $\cY = [\ell]$. We use $\cT_{\ell,k}$ to denote the set of all channels that map from $[k]$ to $[\ell]$.

The following class of deterministic channels plays a critical role in our theory:

\begin{definition}[Threshold channels]
	\label{def:thresh-channel}
	For some $k \in \N$, let $p$ and $q$ be
	two distributions on $[k]$.
	For any $\ell \in \N$, a deterministic channel $\bT \in \cT_{\ell,k}$ is
	a \emph{threshold channel} if the following property holds for every $u, v \in [k]$: If
	$\frac{p(u)}{q(u)} < \frac{p(v)}{q(v)}$ and $\bT(u) = \bT(v)$, then any $w \in [k]$
	such that $\frac{p(w)}{q(w)} \in \left(\frac{p(u)}{q(u)}, \frac{p(v)}{q(v)}
		\right)$ satisfies $\bT(w) = \bT(u) (= \bT(v))$.
	(The likelihood ratios are assumed to take values on the extended real line; i.e., on $\real \cup \{-\infty, +\infty\}$.)
\end{definition}

\begin{remark}
	Threshold channels are intuitively easy to understand when all the likelihood
	ratios are distinct (this may be assumed without loss of generality in our
	paper, as explained later): Arrange the inputs in increasing order of their
	likelihood ratios and partition them into $\ell$ contiguous blocks.
   Thus, there are at most $k^\ell$ such threshold channels (up to reordering of output labels).
\end{remark}

Our first result proved in \Cref{sec:ext-points-comm} is for the class of
communication-constrained channels, and shows that all extreme points of the
joint range are obtained using deterministic threshold channels:
\begin{restatable}[Extreme points of the joint range under
		communication constraints] {theorem}{ThmJointRangeComm}
	\label{thm:ext-point-comm}
	Let $p$ and $q$ be two distributions on $[k]$.
	Let $\cA$ be the set of all pairs of distributions that are obtained by passing
	$p$ and $q$ through a channel of output size $\ell$, i.e.,
	\begin{align*}
		\cA =
		\{(\bT p, \bT q): \bT \in \cT_{\ell,k}\}.
	\end{align*}
	If $(\bT p, \bT q)$ is an extreme point of $\cA$, then $\bT$ is a threshold
	channel.
\end{restatable}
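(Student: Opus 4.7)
The plan is to exploit the polytopal structure of $\cA$: since $\cT_{\ell,k}$ is a convex polytope (a product of $k$ simplices) and $\cA$ is its image under the linear map $\bS \mapsto (\bS p, \bS q)$, $\cA$ is itself a convex polytope, and every extreme point of a polytope is uniquely exposed by some supporting linear functional. I will use this exposing functional together with a columnwise linear-programming decomposition to force $\bT$ to be deterministic, and then a standard upper-envelope argument will yield the threshold property. Without loss of generality I assume $p(x) + q(x) > 0$ for all $x \in [k]$, since any column $x$ with $p(x) = q(x) = 0$ contributes nothing to the joint range image.

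Assuming $(\bT p, \bT q)$ is extreme in $\cA$, choose $\alpha, \beta \in \R^\ell$ so that
\[
L(\bS) \;:=\; \langle \alpha, \bS p\rangle + \langle \beta, \bS q\rangle \;=\; \sum_{x=1}^{k}\sum_{y=1}^{\ell} c_y(x)\,\bS(y,x), \qquad c_y(x) := \alpha_y p(x) + \beta_y q(x),
\]
is maximized over $\bS \in \cT_{\ell,k}$ only at channels $\bS$ satisfying $(\bS p, \bS q) = (\bT p, \bT q)$. Because $L$ decouples across columns, any maximizer must place $\bS(\cdot, x)$ on $\argmax_y c_y(x)$ for each $x$. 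If this argmax had two distinct elements $y_1, y_2$ at some $x$, then replacing column $x$ of $\bT$ with $\delta_{y_1}$ versus $\delta_{y_2}$ (and leaving the other columns unchanged) would yield two maximizers of $L$ whose images differ by $p(x)$ in coordinate $y_1$ of the first component and by $q(x)$ in coordinate $y_1$ of the second; since $p(x) + q(x) > 0$ these images are distinct, contradicting uniqueness of the exposed extreme point. Hence $\argmax_y c_y(x)$ is a singleton at every $x$, $\bT$ is deterministic, and $\bT(x) = \argmax_{y \in [\ell]}\big(\alpha_y p(x) + \beta_y q(x)\big)$.

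For the threshold property, set $\ell_x := p(x)/q(x)$ (interpreted as $+\infty$ when $q(x) = 0$ and as $0$ when $p(x) = 0$). For $q(x) > 0$, dividing $c_y(x)$ by $q(x)$ gives $\bT(x) = \argmax_y(\alpha_y \ell_x + \beta_y)$, and the extreme-ratio cases fit the same formula via the limits $\argmax_y \alpha_y$ and $\argmax_y \beta_y$. The function $f(r) := \argmax_y(\alpha_y r + \beta_y)$ picks out the active affine piece of the upper envelope of the $\ell$ lines $r \mapsto \alpha_y r + \beta_y$; since this envelope is piecewise linear and convex, each jump of $f$ as $r$ increases must go to a piece of strictly larger slope $\alpha_y$, so each label $y$ is the unique argmax on at most one contiguous interval of $r$. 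Consequently, if $\bT(u) = \bT(v) = y_1$ with $\ell_u < \ell_v$, then $f \equiv y_1$ on all of $[\ell_u, \ell_v]$, so any $w$ with $\ell_w \in (\ell_u, \ell_v)$ also satisfies $\bT(w) = y_1$, which is precisely the threshold condition. The main subtlety I expect is invoking \emph{uniqueness} of the exposed maximum (rather than mere attainment)---this is automatic for a polytope and is exactly what rules out ties in the argmax; everything else is the clean monotonicity of the upper envelope of affine functions.
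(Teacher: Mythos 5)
Your proof is correct, but it proceeds by a genuinely different mechanism than the paper's. The paper argues contrapositively: assuming $\bT$ is a deterministic non-threshold channel, it locates indices $a<b<c$ with $a,c$ in one block and $b$ in another, and explicitly builds two perturbed channels $\bT_1, \bT_2$ (moving carefully balanced $\epsilon$-masses between the two output labels in columns $a,b$ and $b,c$) so that $(\bT p,\bT q)$ is their nontrivial average, hence not extreme. Your argument instead runs forward: it invokes the fact that a vertex of a polytope is \emph{uniquely} exposed by a supporting linear functional $L(\bS)=\langle\alpha,\bS p\rangle+\langle\beta,\bS q\rangle$, observes that $L$ decouples over columns so each column must sit on $\argmax_y (\alpha_y p(x)+\beta_y q(x))$, uses uniqueness of the exposed point to rule out ties (forcing $\bT$ to be deterministic), and then normalizes by $q(x)$ to read off $\bT(x)=\argmax_y(\alpha_y \ell_x+\beta_y)$ as the active piece of the upper envelope of $\ell$ affine functions of the likelihood ratio $\ell_x$ --- contiguity of argmax intervals then gives the threshold property immediately. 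Your route is more conceptual and explains \emph{why} thresholds arise (the optimal deterministic map is an argmax over affine functions of the likelihood ratio, which partitions $\R\cup\{\pm\infty\}$ into contiguous pieces); it also sidesteps the paper's preliminary merging of equal likelihood ratios, since ties map to the same argmax automatically. The paper's construction is more elementary (no appeal to exposed vertices) and is closer in spirit to the perturbation argument later reused for the privacy-constrained case (\Cref{lem:forbidden-structure}), which is likely why the authors chose it. One thing worth making explicit in your write-up: the crucial claim that every vertex of a polytope is an exposed point is standard but should be cited (e.g., Ziegler's \emph{Lectures on Polytopes}), since for general convex sets extreme points need not be exposed.
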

We note that the above result is quite surprising: $(\bT p, \bT q)$ is extreme point of $\cA$ only if $\bT$ is an extreme point of $\cT_{\ell,k}$ (i.e., a deterministic channel), but \Cref{thm:ext-point-comm} demands that $\bT$ be a deterministic \emph{threshold} channel, meaning it lies in a very small subset of deterministic channels. Indeed, even for $\ell = 2$, the number of deterministic channels from $[k]$ to $[2]$ is $2^k$, whereas the number of threshold channels is just $2k$.
We note that the result above is similar in spirit to Tsitsiklis~\cite[Proposition 2.4]{Tsitsiklis93}.
However, the focus there was on a particular objective, the probability of
error in simple hypothesis testing, with non-identical channels for users. Our result is for identical channels and is generally applicable to
quasi-convex objectives, as mentioned later.

We now consider the case where $\cC$ is the set of $\epsilon$-LDP channels from $[k]$ to $[\ell]$. Since $\cC$ is a set of private channels, it does not contain any deterministic channels (thus, does not contain threshold channels). Somewhat surprisingly, we still show that the threshold channels play a fundamental role in the extreme points of the joint range under $\cC$. The following result shows that any extreme point of the joint range $\cA$ can be obtained by a threshold channel mapping into $[2\ell^2]$, followed by an $\epsilon$-LDP channel from $[2\ell^2]$ to $[\ell]$:

\begin{restatable}[Extreme points of the joint range under privacy and
		communication constraints]{theorem}{ThmOptStructurePriv}
	\label{thm:ext-point-priv-comm}
	Let $p$ and $q$ be distributions on $[k]$.
	Let $\cC$ be the set of $\epsilon$-LDP channels from $[k]$ to $[\ell]$.
	Let $\cA$ be the set of all pairs of distributions that are obtained by
	applying a channel from $\cC$ to $p$ and $q$, i.e.,
	\begin{align}
		\cA = \{(\bT
		p, \bT q) \mid \bT \in \cC\}.
	\end{align}
	If $(\bT p, \bT q)$ is an extreme point of $\cA$ for $\bT \in \cC$, then $\bT$
	can be written as
	$\bT = \bT_2 \times \bT_1$ for some threshold channel $\bT_1 \in
		\cT_{2\ell^2,k}$ and some $\bT_2$ an extreme point of the set of $\epsilon$-LDP
	channels from $[2\ell^2]$ to $[\ell]$.
\end{restatable}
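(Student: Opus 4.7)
The plan is to use the fact that $\cA$ is the image of the polytope $\cC$ under the linear map $\Phi(\bT) := (\bT p, \bT q)$, so every vertex of $\cA$ is attained by some extreme point of $\cC$. Since we are free to replace $\bT$ by any channel in the preimage $\Phi^{-1}(\bT p, \bT q) \cap \cC$ (this is a face of $\cC$ and contains an extreme point of $\cC$) without changing the image, it suffices to show that any extreme preimage factors as claimed. The key structural insight is that this extreme preimage arises as the maximizer of a linear functional over $\cC$, and the resulting per-column optimization is a continuous knapsack whose solution is a piecewise-constant function of the likelihood ratio.

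\textbf{Supporting hyperplane.} Since $(\bT p, \bT q)$ is a vertex of the polytope $\cA \subseteq \Delta_\ell \times \Delta_\ell$, I would pick $\alpha, \beta \in \R^\ell$ generic enough to break ties, so that $(\bT p, \bT q)$ is the unique maximizer of $(p',q') \mapsto \alpha \cdot p' + \beta \cdot q'$ over $\cA$. Pulling back, (an extreme preimage of) $\bT$ maximizes
\begin{equation*}
\widetilde L(T) \ :=\ \sum_{y,x} T(y,x)\bigl(\alpha_y p(x) + \beta_y q(x)\bigr)
\end{equation*}
over the LDP polytope $\cC$, and we may assume $\bT$ is an extreme point of $\cC$ itself.

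\textbf{Per-column knapsack.} Parametrize $T(y,x) = a_y\bigl(1 + (e^\epsilon-1)\gamma(y,x)\bigr)$ with $a_y := \min_x T(y,x)$ and $\gamma(y,x) \in [0,1]$. Stochasticity collapses to the single budget constraint $\sum_y a_y \gamma(y,x) = B$ with $B := (1-\sum_y a_y)/(e^\epsilon-1)$. Fixing the optimal $\{a_y\}$, the objective $\widetilde L$ decouples across $x$ into
\begin{equation*}
\max_{\gamma(\cdot,x) \in [0,1]^\ell}\ q(x)\sum_y a_y\,\gamma(y,x)\,(r_x \alpha_y + \beta_y),\qquad \text{subject to}\qquad \sum_y a_y\,\gamma(y,x) = B,
\end{equation*}
with $r_x := p(x)/q(x)$. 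This is a continuous knapsack: sort the outputs by the affine score $y \mapsto r_x \alpha_y + \beta_y$ in decreasing order and greedily set $\gamma(y,x)=1$ until the budget $B$ is used, with at most one fractional entry. By the extremality of $\bT$ in $\cC$ (Kairouz--Oh--Viswanath staircase form) together with genericity of $(\alpha,\beta)$, this fractional entry can be avoided, giving $\gamma(y,x) \in \{0,1\}$.

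\textbf{Threshold structure in $r_x$.} The sorted order of the $\ell$ affine scores $r \mapsto r\alpha_y + \beta_y$ changes only at their $\binom{\ell}{2}$ pairwise crossings, so $\R$ splits into at most $\binom{\ell}{2}+1 \le \ell^2$ intervals. On each interval both the sorted order and the greedy top-weight set $S(x)$ are constant. Allowing for boundary ambiguity at the crossings (where both one-sided sets are optimal) inflates the count by at most a factor of $2$, bounding the number of distinct optimal $\gamma(\cdot,x)$-patterns by $2\ell^2$. Inputs sharing a pattern are precisely those whose $r_x$ lies in a common interval, i.e., they form contiguous blocks in likelihood-ratio order.

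\textbf{Assembling the factorization.} I would let $\bT_1 \in \cT_{2\ell^2,k}$ send each $x$ to the index of its block (this is a threshold channel per \Cref{def:thresh-channel}), and define $\bT_2 : [2\ell^2] \to [\ell]$ by taking $\bT_2(\cdot,z)$ to be the common column $\bT(\cdot,x)$ over block $z$. Then $\bT = \bT_2 \times \bT_1$. Each column of $\bT_2$ has entries in $\{a_y, e^\epsilon a_y\}$, which satisfies the extremal characterization of $\epsilon$-LDP channels from $[2\ell^2]$ to $[\ell]$.

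\textbf{Main obstacle.} The delicate step is the $2\ell^2$ bound in the third step: one must simultaneously track the evolution of the greedy top-weight set and the potential fractional boundary element as $r$ crosses one of the $\binom{\ell}{2}$ intersections, and verify this bound even under degenerate $\{a_y\}$ (where $B$ coincides with a partial sum, or multiple crossings happen at the same $r$). The remainder is standard polytope bookkeeping combining the vertex/supporting-hyperplane correspondence for $\cA$ with the extreme-point structure of the $\epsilon$-LDP polytope.
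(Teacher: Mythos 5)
Your proposal takes a genuinely different route from the paper. The paper proves this theorem via \Cref{thm:ext-point-priv-uniq-col-poly}: a pigeonhole argument over a ``forbidden structure'' (\Cref{lem:forbidden-structure}, a $2 \times 3$ sub-pattern that permits a double perturbation keeping $(\bT p, \bT q)$ fixed) directly caps the number of distinct columns of \emph{any} channel achieving an extreme point of $\cA$ by $2\ell^2$; then \Cref{thm:OptStructGeneral} upgrades ``bounded columns'' to ``threshold channel followed by an extreme LDP channel'' via the pre-processing closure property (\Cref{cl:composition}) and a convex-hull decomposition of $\cA$. Your approach instead pulls back a supporting hyperplane of $\cA$ at the extreme point to a linear functional on $\cC$, parametrizes by the row minima $\{a_y\}$ and the normalized increments $\gamma(y,x)$, and observes that the per-column optimization is a continuous knapsack whose solution is piecewise constant in the likelihood ratio $r_x$. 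This is attractive because it makes the threshold structure explicit (the thresholds live at the $\binom{\ell}{2}$ crossings of the affine scores $r \mapsto r\alpha_y + \beta_y$) and because the bound $2\ell^2$ falls out of elementary line-arrangement combinatorics rather than pigeonholing.

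That said, I see three real gaps. First, the claim that ``the fractional entry can be avoided, giving $\gamma(y,x) \in \{0,1\}$'' is not true in general: the greedy knapsack has a fractional entry unless the budget $B$ exactly hits a partial sum of the $\{a_y\}$, and this is precisely the ``free entry'' in the Kairouz--Oh--Viswanath characterization (\Cref{def:one_loose_entry}); extreme points of the LDP polytope are permitted exactly one such entry per column. This claim is, however, unnecessary --- what matters is that the column is fully determined by the interval containing $r_x$ together with $B$, whether or not a fractional coordinate is present. Second, and more seriously, the final assertion that $\bT_2$ is an extreme point of the $\epsilon$-LDP polytope from $[2\ell^2]$ to $[\ell]$ is not established: ``entries lying in $\{a_y, e^\epsilon a_y\}$'' is neither a necessary nor a sufficient condition for extremality, and if you post-facto replace columns of $\bT$ by greedy knapsack solutions, the resulting matrix need no longer be an extreme point of $\cC$, so the contrapositive ``$\bT_2$ not extreme $\Rightarrow$ $\bT$ not extreme'' cannot be invoked directly. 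The paper sidesteps this entirely by the $\cA = \conv(\cdot)$ argument in \Cref{thm:OptStructGeneral}; you would need either that convexity step or a careful argument that the extreme preimage of the vertex of $\cA$ already coincides with the greedy solution (which does hold for generic $(\alpha,\beta)$ unless some $r_x$ sits exactly at a crossing). Third, the degeneracies you flag (coincident crossings, $r_x$ at a crossing, budget coinciding with a partial sum, repeated $a_y$'s) are exactly where the argument needs to be nailed down; the paper's perturbative argument handles all of these uniformly, whereas your route requires a case analysis. With those three items fixed, I believe your argument is a valid and more structurally informative alternative.
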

We prove this structural result in \Cref{sec:ext-points-priv-comm}, which leads to polynomial-time algorithms for constant $\ell$ for maximizing quasi-convex functions, as mentioned in \Cref{ssub:results_comp}.

\subsubsection{Computationally Efficient Algorithms for Instance Optimality}
\label{ssub:results_comp}

The results from the previous sections characterized the minimax-optimal sample
complexity, but did not address instance optimality.
Instance-optimal performance may be substantially better than minimax-optimal performance, as seen by comparing the instance-optimal bounds for binary distributions to the minimax-optimal bounds for general distributions. In this section, we focus on identifying an instance-optimal channel $\bT$ (satisfying the necessary constraints) for a given pair $(p,q)$ of distributions.

Let $p$ and $q$ be fixed distributions over $[k]$. Let $\cP_{\ell, k}^\epsilon$ be the set of all $\epsilon$-LDP channels from $[k]$ to $[\ell]$, and let $\cT_{\ell, k}$ be the set of all channels from $[k]$ to $[\ell]$. Let $\cC \in \{\cP_{\ell, k}^\epsilon, \cT_{\ell, k} \}$. As before, define $\cA = \{(\bT p, \bT q): \bT \in \cC\}$. Let $g: \cA \to \real$ be a (jointly) quasi-convex function; i.e., for all $t \in \real$, the sublevel sets $\{(p', q'): g(p', q') \le t\}$ are convex. In this paper, we are primarily interested in functions corresponding to divergences between the distribution pair. So, unless otherwise mentioned, we shall assume the quasi-convex functions $g$ in this paper are permutation-invariant; i.e., $g(p', q') = g(\Pi p, \Pi q)$ for all permutation matrices $\Pi$. However, our algorithmic results will continue to hold even without this assumption, with an additional factor of $\ell!$ in the time complexity.
We will consider the problem of identifying $\bT$ that solves
\begin{align*}
\max_{\bT \in \cC} g(\bT p, \bT q).
\end{align*}
The quasi-convexity of $g$ implies that the maximum is attained at some $\bT$ such that $(\bT p,\bT q)$ is an extreme point of $\cA$. We can thus leverage the results from \Cref{ssub:our_results_extreme} to search over the subset of channels satisfying certain structural properties.

Identifying $\bT$ that maximizes the Hellinger divergence leads to an \emph{instance-optimal} test for minimizing sample complexity for testing between $p$ and $q$ with channel constraints $\cC$: This is because if each user chooses the channel $\bT$, the resulting sample complexity will be $\Theta\left(\frac{1}{\hel^2(\bT p, \bT q)}\right)$. Thus, the instance-optimal sample complexity will be obtained by a channel $\bT$ that attains $\max_{\bT \in \cC} \hel^2(\bT p, \bT q)$.
Note that the Hellinger divergence is convex (and thus quasi-convex) in its arguments. Apart from the Hellinger divergence, other functions of interest such as the Kullback--Leibler divergence or Chernoff information (which are also convex) characterize the asymptotic error rates in hypothesis testing, so finding $\bT$ for these functions identifies instance-optimal channels in the asymptotic (large-sample) regime. Other potential functions of interest include R\'{e}nyi divergences of all orders, which are quasi-convex, but not necessarily convex~\cite{ErvHar14}.

As mentioned earlier, the results of Kairouz, Oh, and Viswanath~\cite{KaiOV16} give a linear program with $2^k$ variables to find an instance-optimal channel under privacy constraints, which is computationally prohibitive. It is also unclear if their result extends when the channels are further restricted to have communication constraints in addition to privacy constraints.
We now show how to improve on the guarantees of Kairouz, Oh, and Viswanath~\cite{KaiOV16} in the presence
of communication constraints, using the structural results from the previous subsection.

\begin{restatable}[Computationally efficient algorithms for maximizing quasi-convex functions]{corollary}{CorOptimizationPoly}
	\label{cor:alg-priv-quasi-cvx}
	Let $p$ and $q$ be fixed distributions over $[k]$, let
	$\cC \in \{\cT_{\ell,k}, \cP_{ \ell, k}^\epsilon\}$, and let $\cA = \{(\bT p, \bT
		q): \bT \in \cC\}$.
	Let $g: \cA \to \R$ be a jointly quasi-convex function.
	When $\cC = \cT_{\ell,k}$, there is an algorithm that solves
	$\max_{\bT \in \cC} g(\bT p ,\bT q)$
	in
	time polynomial in $k^{\ell}$.
	When $\cC = \cP_{ \ell, k}^\epsilon$, there is an algorithm that solves
	$\max_{\bT \in \cC} g(\bT p ,\bT q)$
	in
	time polynomial in $k^{\ell^2}$ and $2^{\ell^3 \log \ell}$.
\end{restatable}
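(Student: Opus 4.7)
The plan is to exploit the quasi-convexity of $g$ to reduce the maximization to a search over a manageable finite list of channels, where the list is produced via the structural characterizations in \Cref{thm:ext-point-comm,thm:ext-point-priv-comm}. Since $g$ is quasi-convex on the convex compact set $\cA$, its maximum over $\cA$ is attained at an extreme point; thus it suffices to produce a finite $L \subseteq \cC$ such that every extreme point of $\cA$ equals $(\bT p, \bT q)$ for some $\bT \in L$, and then return $\argmax_{\bT \in L} g(\bT p, \bT q)$. Since evaluating $(\bT p, \bT q)$ and $g$ on one $\bT$ costs $\poly(k, \ell)$, the total runtime is $|L| \cdot \poly(k,\ell)$.

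For $\cC = \cT_{\ell, k}$, \Cref{thm:ext-point-comm} allows me to take $L$ to be the set of threshold channels. First sort $[k]$ in increasing order of $p(i)/q(i)$ (assuming distinct ratios without loss of generality, by a standard limiting argument). A threshold channel then corresponds to choosing cut points among the $k-1$ adjacent positions of the sorted sequence and labeling the resulting contiguous blocks with elements of $[\ell]$; by the permutation-invariance of $g$, the labeling is irrelevant, so the number of distinct $(\bT p, \bT q)$ to enumerate is at most $\binom{k-1}{\ell-1} = O(k^{\ell-1})$, giving total time $\poly(k^\ell)$.

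For $\cC = \cP_{\ell, k}^\epsilon$, \Cref{thm:ext-point-priv-comm} lets me take $L$ to be the set of products $\bT = \bT_2 \bT_1$, where $\bT_1 \in \cT_{2\ell^2, k}$ is a threshold channel and $\bT_2$ is an extreme point of the set $\cP_{\ell, 2\ell^2}^\epsilon$ of $\epsilon$-LDP channels from $[2\ell^2]$ to $[\ell]$. The threshold-channel argument from the previous paragraph, applied with output size $2\ell^2$, yields $O(k^{2\ell^2-1}) = \poly(k^{\ell^2})$ candidates for $\bT_1$. For the extreme points of $\cP_{\ell, 2\ell^2}^\epsilon$, I would invoke the Kairouz--Oh--Viswanath characterization of extremal LDP mechanisms~\cite{KaiOV16}: each row of such an extreme point, up to a positive scalar determined by the column-stochasticity constraints, has entries in $\{1, e^\epsilon\}$, giving at most $2^{2\ell^2}$ combinatorial row types; with $\ell$ rows and an extra $\ell!$ for output-label permutations, the total number of extreme $\bT_2$'s is bounded by $\ell! \cdot 2^{2\ell^3} = 2^{O(\ell^3 \log \ell)}$. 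Iterating over all pairs $(\bT_1, \bT_2)$, evaluating $(\bT p, \bT q)$, and computing $g$ thus takes time $\poly(k^{\ell^2}, 2^{\ell^3 \log \ell})$, as claimed.

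The main obstacle is the second step: bounding the number of extreme points of $\cP_{\ell, 2\ell^2}^\epsilon$ and enumerating them explicitly. For each combinatorial row-pattern one must check that the row scalings are uniquely pinned down by the column-sum constraints so that each pattern corresponds to at most one extreme point; this is exactly the KOV analysis of extremal mechanisms. Once that structural bound is in hand, together with \Cref{thm:ext-point-comm,thm:ext-point-priv-comm}, the proof reduces to a direct enumeration.
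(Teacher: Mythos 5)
For $\cC = \cT_{\ell,k}$ your argument is essentially the paper's (Corollary~\ref{cor:ext-point-comm-quasicvx}): quasi-convexity reduces the maximum to an extreme point of $\cA$, \Cref{thm:ext-point-comm} restricts these to threshold channels, and direct enumeration gives $\poly(k^\ell)$.

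For $\cC = \cP_{\ell,k}^\epsilon$ your skeleton matches the paper's Corollary~\ref{cor:alg-priv-quasi-cvx-second} (decompose via \Cref{thm:ext-point-priv-comm} into a threshold channel into $[2\ell^2]$ composed with an extreme point of $\cP_{\ell,2\ell^2}^\epsilon$, then enumerate both), but the step where you bound and enumerate the extreme points of $\cP_{\ell,2\ell^2}^\epsilon$ has a real gap. You assert, citing KOV, that every extreme point of the LDP polytope has each row's entries equal (up to a row scalar) to $\{1, e^\epsilon\}$. That is not what KOV establish: they prove that staircase mechanisms \emph{suffice} as maximizers for a class of sublinear utility functionals, which does not imply that every vertex of the polytope is a staircase. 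The paper's own structural analysis of this polytope (Condition~\ref{def:one_loose_entry} and Claim~\ref{claim:pvt_channels_one_loose_entry}) is a per-\emph{column} condition --- at most one free entry per column, pinned by column stochasticity --- under which a single row can carry several distinct intermediate values arising from free entries in different columns, so the two-values-per-row picture is not available in general. The paper avoids having to characterize the vertices explicitly: it invokes \Cref{fact:vertex-enumeration}, noting that $\cJ_{\ell,\ell'}^{\gamma,\nu}$ with $\ell'=2\ell^2$ is a polytope in $\R^{2\ell^3}$ cut out by $\poly(\ell)$ inequalities, so its vertices number at most $\binom{\poly(\ell)}{2\ell^3}$ and can be enumerated in $2^{O(\ell^3\log\ell)}$ time. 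If you replace your KOV-based row count with this generic vertex-enumeration bound (or with a count built on the per-column free-entry property), the rest of your argument delivers the stated runtime.
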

We prove \Cref{cor:alg-priv-quasi-cvx}  in \Cref{sec:ext-points-comm} and \Cref{SecLDPCommTesting} for $\cC = \cT_{\ell,k}$ and $\cC = \cP_{\ell,k}^\epsilon$, respectively.

\begin{remark}
	When $\ell$ is constant, we obtain a polynomial-time algorithm for maximizing
	any quasi-convex function under $\cT_{\ell,k}$ or $\cP_{ \ell, k}^\epsilon$
	channel constraints.
	When $\cC = \cT_{\ell,k}$ and $g$ is the Kullback--Leibler divergence, this exactly solves (for small $\ell$) %
	a problem introduced in Carpi, Garg, and Erkip~\cite{CarGE21}, which proposed a polynomial-time heuristic.
\end{remark}

Applying the above result to the Hellinger divergence $\hel^2$, we obtain the following
result for simple binary hypothesis testing, proved in \Cref{SecLDPCommTesting}:
\begin{restatable}[Computationally efficient algorithms for instance-optimal
		results under communication constraints]{corollary}{ThmPolyTimeSBHT}
	\label{cor:alg-priv-sbht}
	Let $p$ and $q$ be two distributions on $[k]$.
	For any $\epsilon$ and any integer $\ell > 1$,
	there is an algorithm that runs in time polynomial in $k^{\ell^2}$ and
	$2^{\ell^3 \log \ell}$ and outputs an $\epsilon$-LDP channel $\bT$ mapping from $[k]$ to
	$[\ell]$, such that if $N$ denotes the sample
	complexity of hypothesis testing between $p$ and $q$ when each individual uses
	the channel $\bT$, then $ N \asymp \nstar(p,q,\epsilon, \ell)$.

	In particular, the sample complexity with $\bT$ satisfies
	\begin{align}
     N \lesssim \nstar(p,q,\epsilon)  \cdot \left(1 + \frac{\log\left(\nstar\left(p,q,\epsilon\right) \right)}{\ell} \right).
    \end{align}
The channel $\bT$ may be decomposed as a deterministic threshold channel to a domain of size
	$[2\ell^2]$, followed by an $\epsilon$-LDP channel from $[2\ell^2]$ to $[\ell]$.
\end{restatable}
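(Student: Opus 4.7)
The plan. The corollary has two parts: (i) the algorithm that produces an instance-optimal channel $\bT$ satisfying $N \asymp \nstar(p,q,\epsilon,\ell)$, together with the claimed structural decomposition, and (ii) the ``in particular'' bound $N \lesssim \nstar(p,q,\epsilon)\bigl(1 + \log(\nstar(p,q,\epsilon))/\ell\bigr)$.

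Algorithmic part. Without any further constraint, the sample complexity of simple binary hypothesis testing between two distributions $p'$ and $q'$ is $\Theta(1/\hel^2(p',q'))$. Hence once each user commits to a single channel $\bT \in \cP_{\ell,k}^\epsilon$, the resulting sample complexity is $\Theta(1/\hel^2(\bT p, \bT q))$, and $\nstar(p,q,\epsilon,\ell) \asymp \min_{\bT \in \cP_{\ell,k}^\epsilon} 1/\hel^2(\bT p, \bT q)$. Since the squared Hellinger divergence is jointly convex (hence quasi-convex) and permutation-invariant, I would invoke \Cref{cor:alg-priv-quasi-cvx} with $g = \hel^2$. This produces, in $\poly(k^{\ell^2},\, 2^{\ell^3 \log \ell})$ time, a channel $\bT^\star$ that maximizes $\hel^2(\bT p, \bT q)$ over $\cP_{\ell,k}^\epsilon$, as required. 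Because the maximum of a quasi-convex function on a convex set is attained at an extreme point, $(\bT^\star p, \bT^\star q)$ lies at an extreme point of the joint range $\cA$, and applying \Cref{thm:ext-point-priv-comm} to this extreme point yields the claimed decomposition of $\bT^\star$ as a deterministic threshold channel into $[2\ell^2]$ composed with an extreme $\epsilon$-LDP channel from $[2\ell^2]$ to $[\ell]$.

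The ``in particular'' sample-complexity bound. Since $N \asymp \nstar(p,q,\epsilon,\ell)$ by the first part, it suffices to exhibit \emph{any} $\epsilon$-LDP channel with $\ell$ outputs whose sample complexity meets the claimed bound. I would build such a channel by geometric bucketing of the likelihood ratio $p(x)/q(x)$. Concretely, setting $L := \nstar(p,q,\epsilon)$, sort the symbols of $[k]$ by their likelihood ratio and divide them into $\ell$ contiguous buckets whose endpoints form a geometric progression; then feed the $\ell$-ary bucket index through an $\epsilon$-LDP channel on $[\ell]$ of the same randomized-response type used in the proofs of \Cref{thm:samp-comp-uprbds} and \Cref{lem:privacy-free}. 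A standard calculation shows that merging outputs whose likelihood ratios differ by at most a constant factor preserves $\hel^2$ up to constants; since the dynamic range of ``meaningful'' likelihood ratios for testing $p$ versus $q$ is $e^{O(\log L)}$, a geometric grid with $\ell$ nodes incurs at most a multiplicative factor of $1 + \log L/\ell$ in $1/\hel^2$, and hence in sample complexity. At $\ell = 2$ this recovers the overhead $\alpha \lesssim \log L$ of \Cref{thm:samp-comp-uprbds}, and at $\ell \gtrsim \log L$ the overhead collapses to a constant, matching \Cref{lem:privacy-free}.

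Main obstacles. The algorithmic half is essentially a plug-in of \Cref{cor:alg-priv-quasi-cvx}, with the decomposition coming from \Cref{thm:ext-point-priv-comm}. The delicate part is the quantitative bucketing argument: one must verify that $\hel^2$ is preserved up to constants when the likelihood ratios inside a bucket vary by only a constant factor, and control the tails of the likelihood-ratio distribution so that $e^{O(\log L)}$ really is the relevant dynamic range. The cleanest route is to reuse the truncation and randomized-response machinery already deployed in the proofs of \Cref{thm:samp-comp-uprbds} and \Cref{lem:privacy-free}, adapting their binary- and $\log L$-output constructions into an $\ell$-output geometric-bucketing construction that interpolates between them.
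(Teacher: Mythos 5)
Your algorithmic half is correct and is the paper's route: invoke \Cref{cor:alg-priv-quasi-cvx} with $g = \hel^2$ to find, in $\poly(k^{\ell^2}, 2^{\ell^3\log\ell})$ time, the channel maximizing $\hel^2(\bT p,\bT q)$ over $\cP^{\epsilon}_{\ell,k}$; by \Cref{fact:sample-complexity} its sample complexity is $\asymp \nstar(p,q,\epsilon,\ell)$, and the decomposition into a threshold channel followed by an extreme LDP channel is \Cref{thm:ext-point-priv-comm}.

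For the ``in particular'' bound, however, you take a genuinely different route than the paper, and yours has a gap. The paper proves $\nstar(p,q,\epsilon,\ell) \lesssim \nstar(p,q,\epsilon)\bigl(1 + \log(\nstar(p,q,\epsilon))/\ell\bigr)$ via \Cref{lem:comm-privacy-effect}: take the \emph{optimal} unconstrained $\epsilon$-LDP channel $\bT^\star$ (with arbitrary output size), compress \emph{its output} $(\bT^\star p, \bT^\star q)$ down to $\ell$ symbols using \Cref{fact:comm-constraints}, which costs a factor $1 + \log(1/\hel^2(\bT^\star p,\bT^\star q))/\ell = 1+\log(\nstar(p,q,\epsilon))/\ell$, and observe that the composition is still $\epsilon$-LDP because LDP is closed under post-processing (\Cref{cond:l-postprocessing}). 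This is a black-box comparison against the true optimum, so the multiplicative loss is automatically relative to $\nstar(p,q,\epsilon)$. Your construction instead buckets the likelihood ratios of the \emph{original} $p,q$ geometrically and then applies an $\ell$-ary randomized response. Two problems: first, the relevant likelihood-ratio dynamic range of $p$ versus $q$ is governed by $\hel^2(p,q)$, i.e.\ $\nstar(p,q)$, not by $\nstar(p,q,\epsilon)$, which can be much larger; second, and more seriously, your argument bounds $\hel^2(\RR^{\epsilon,\ell}\circ\bT_{\mathrm{bucket}}\,p,\,\cdot\,)$ relative to a quantity depending on $\hel^2(p,q)$, whereas the target is a bound relative to $\max_{\bT\in\cP^\epsilon}\hel^2(\bT p,\bT q)$, and these differ (e.g.\ for $\epsilon \le 1$ one is $\asymp \epsilon^2\dtv^2$ while the other is $\asymp\hel^2$). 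The optimal $\epsilon$-LDP channel need not be of bucket-plus-$\RR$ form — by \Cref{thm:ext-point-priv-comm} the inner factor is some extreme point of $\cP^\epsilon_{\ell,2\ell^2}$, not necessarily a randomized response — so there is no direct comparison showing your explicit channel is within a $1+\log(\cdot)/\ell$ factor of optimal. The clean fix is exactly the paper's: do not construct a channel from $p,q$ at all, but instead compress the output of the (existential) optimal private channel, using closure under post-processing to retain privacy.
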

Thus, by choosing $\ell =2$, we obtain a polynomial-time algorithm with nearly instance-optimal sample complexity (up to logarithmic factors) under just $\epsilon$-LDP constraints.

\subsection{Related Work} %
\label{sec:related_work}

Distributed estimation has been studied extensively under resource constraints
such as memory, privacy, and communication.
Typically, this line of research considers problems of interest such as
distribution estimation~\cite{RobToo70, LeiRiv86, CheKO21, BarHO20}, identity
or independence testing~\cite{AchCT20-I, AchCT20-II, AchCT20-III}, and
parameter estimation~\cite{Hel74,DucJWZ14,  DucJW18, BravGMNW16,
	DucRog19, BarCO20, DiaKPP22-streaming}, and identifies minimax-optimal bounds on the error or
sample complexity. In what follows, we limit our discussion to related work on hypothesis testing
under resource constraints.

For \emph{memory-constrained hypothesis testing}, the earliest works in Cover~\cite{Cover69} and Hellman and Cover~\cite{HelCov73} derived tight bounds on the memory size needed to perform asymptotically error-free testing. Hellman and Cover~\cite{HelCov71} also highlighted the benefits of randomized algorithms. These benefits were also noted in recent work by Berg, Ordentlich, and Shayevitz~\cite{BerOS20}, which considered the error exponent in terms of the memory size. Recently, Braverman, Garg, and Zamir~\cite{BraGZ22} showed tight bounds on the
memory size needed to test between two Bernoulli distributions.

\emph{Communication-constrained hypothesis testing} has two different interpretations.
In the information theory literature, Berger~\cite{Ber79}, Ahlswede and
Csisz{\'a}r~\cite{AhlCsi86}, and Amari and Han~\cite{AmaHan98} considered a
family of problems where two nodes, one which only observes $X_i$'s and the
other which only observes $Y_i$'s, try to distinguish between $P_{XY}$ and $Q_{XY}$. Communication between the nodes occurs over rate-limited channels.
The second interpretation, also called ``decentralized detection'' in
Tsitsiklis~\cite{Tsitsiklis88}, is more relevant to this work.
Here, the observed $X_i$'s are distributed amongst different nodes (one
observation per node) that communicate a finite number of messages (bits) to a
central node, which needs to determine the hypothesis.
Tsitsiklis~\cite{Tsitsiklis88, Tsitsiklis93} identified the optimal decision
rules for individual nodes and considered asymptotic error rates in terms of
the number of bits. These results were recently extended to the nonasymptotic regime in Pensia,
Jog, and Loh~\cite{PenJL22, PenJL22-isit}.

\emph{Privacy-constrained hypothesis testing} has been studied in the asymptotic and nonasymptotic regimes under different notions of privacy.
The local privacy setting, which is relevant to this paper, is similar to the
decentralized detection model in Tsitsiklis~\cite{Tsitsiklis93}, except that
the each node's communication to the central server is private.
This is achieved by passing observations through private channels.
Liao, Sankar, Calmon, and Tan~\cite{LiaSCT17a, LiaSCT17b} considered maximizing
the error exponent under local privacy notions defined via maximal leakage and
mutual information.
Sheffet~\cite{Sheffet2018} analyzed the performance of the randomized response
method for LDP for hypothesis testing.
Gopi, Kamath, Kulkarni, Nikolov, Wu, and Zhang~\cite{GopKKNWZ20} showed that
$M$-ary hypothesis testing under pure LDP constraints requires exponentially
more samples ($\Omega(M)$ instead of $O(\log M)$).
Closely related to the instance-optimal algorithms in our paper, Kairouz, Oh,
and Viswanath~\cite{KaiOV16} presented an algorithm to find LDP channels that
maximize the output divergence for two fixed probability distributions at the
channel input; the proposed algorithm runs in time exponential in the domain
size of the input distributions.\footnote{We remark, however, that the algorithm in Kairouz, Oh, and Viswanath~\cite{KaiOV16} is applicable to a wider class of objective functions, which they term ``sublinear.''}
Note that divergences are directly related to error exponents and sample
complexities in binary hypothesis testing.
The results of Kairouz, Oh,
and Viswanath~\cite{KaiOV16} on extreme points of the polytope of LDP channels were strengthened in Holohan, Leith, and Mason~\cite{HolLM17}, which characterized the extreme points in special cases.
We were able to find only two
other papers that consider instance optimality, but in rather special
settings~\cite{GhaGKMZ21,AsiFT22}.
For simple binary hypothesis testing in the \emph{global} differential privacy
setting, Canonne, Kamath, McMillan, Smith, and Ullman~\cite{CanKMSU19}
identified the optimal test and corresponding sample complexity.
Bun, Kamath, Steinke, and Wu~\cite{BunKSW19} showed that $O(\log M)$ samples
are enough for $M$-ary hypothesis testing in the global differential privacy
setting.

\subsection{Organization}
This paper is organized as follows: \Cref{sec:preliminaries_and_facts} records standard results.
	\Cref{sec:hypothesis_testing} focuses on the sample complexity of hypothesis testing under privacy constraints. \Cref{sec:ext-points-comm} considers extreme points of the joint range under
	communication constraints.
	\Cref{sec:ext-points-priv-comm} characterizes the extreme points under both privacy and communication constraints.
	\Cref{sec:extensions_to_other_privacy} explores other notions of
	privacy beyond pure LDP.
	Finally, we conclude with a discussion in \Cref{sec:conclusion}.
	We defer proofs of some intermediate results to the appendices.

\section{Preliminaries and Facts}
\label{sec:preliminaries_and_facts}

\paragraph{Notation:}

Throughout this paper, we will focus on discrete distributions.
For a natural number $k \in \N$,
we use $[k]$ to denote the set $\{1,\dots,k\}$ and $\Delta_k$ to denote the set of distributions over $[k]$.
We represent a probability distribution $p \in \Delta_k$ as a vector in $\R^k$. 
Thus, $p_i$ denotes the probability of element $i$ under $p$.
Given two distributions $p$ and $q$, let $\dtv(p,q):= \frac{1}{2} \sum_i |p_i - q_i|$ and $\hel^2(p,q) := \sum_{i}(\sqrt{p_i} - \sqrt{q_i})^2$ denote the total variation distance and Hellinger divergence between $p$ and $q$, respectively.

We denote channels with bold letters such as $\bT$.
As the channels between discrete distributions can be represented by rectangular column-stochastic matrices (each column is nonnegative and sums to one), we also use bold capital letters, such as $\bT$, to denote the corresponding matrices.
In particular, if a channel $\bT$ is from $[k]$ to $[\ell]$,
we denote it by an $\ell \times k$ matrix, where each of the $k$ columns is in $\Delta_\ell$. In the same vein, for a column index $c \in [k]$ and a row index $r \in [\ell]$, we use $\bT(r,c)$  to refer to the entry at the corresponding location. 
 For a channel $\bT : \cX \to \cY$ and a distribution $p$ over $\cX$, we use $\bT p$ to denote the distribution over $\cY$ when $X \sim p$ passes through the channel $\bT$.
In the notation above, when $p$ is a distribution over $[k]$, represented as a vector in $\R^k$, and $\bT$ is a channel from $[k] \to [\ell]$, represented as a matrix $\bT \in \R^{\ell \times k}$, the output distribution $\bT p$ corresponds to the usual matrix-vector product. We shall also use $\bT$ to denote the stochastic map transforming the channel input $X$ to the channel output $Y = \bT(X)$.
Similarly, for two channels $\bT_1$ and $\bT_2$ from $[k_1]$ to $[k_2]$ and $[k_2]$ to $[k_3]$, respectively, the channel $\bT_3$ from $[k_1]$ to $[k_3]$ that corresponds to applying $\bT_2$ to the output of $\bT_1$ is equal to the matrix product $\bT_2 \times \bT_1$. 

Let $\cT_{\ell,k}$ be the set of all channels that map from $[k]$ to $[\ell]$.
We use $\cTT_{\ell,k}$ to denote the subset of $\cT_{\ell,k}$ that corresponds to threshold channels (cf.\ \Cref{def:thresh-channel}). 
We use $\cP^\epsilon_{\ell,k}$ to denote the set of all $\epsilon$-LDP channels from $[k]$ to $[\ell]$.
Recall that for two distributions $p$ and $q$, we use $\nstar(p,q, \epsilon)$ (respectively, $\nstar(p,q, \epsilon, \ell)$) to denote the sample complexity of simple binary hypothesis testing under privacy constraints (respectively, both privacy and communication constraints).

For a set $A$, we use $\conv(A)$ to denote the convex hull of $A$. For a convex set $A$, we use $\ext(A)$ to denote the set of extreme points of $A$.
Finally, we use the following notations for simplicity: (i) $\lesssim$, $\gtrsim$, and $\asymp$ to hide positive constants, and (ii) the standard asymptotic notation $O(\cdot)$, $\Omega(\cdot)$, and  $\Theta (\cdot)$.
Finally, we use $\widetilde{O} (\cdot)$, $\widetilde{\Omega}(\cdot)$, and $\widetilde{\Theta}$ to hide poly-logarithmic factors in their arguments.

\subsection{Convexity}

We refer the reader to Bertsimas and Tsitsiklis~\cite{BerTsi97} for further details.
We will use the following facts repeatedly in the paper, often without mentioning them explicitly:
\begin{fact}[Extreme points of linear transformations]
	\label{fact:ext-point-linear-transform}
	Let $\cA$ be a convex, compact set in a finite-dimensional space.
	Let $\bT$ be a linear function on $\cA$, and define the set $\cA':= \{\bT x: x
		\in \cA \}$.
	Then $\cA'$ is convex and compact, and $\ext(\cA') \subseteq \{\bT x: x \in
		\ext(\cA)\}$.
\end{fact}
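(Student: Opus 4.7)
The plan is to establish the three conclusions—convexity, compactness, and the extreme-point containment—in that order, with the third being the only non-routine piece. Convexity of $\cA'$ follows immediately from linearity of $\bT$: for any $y_1 = \bT x_1$ and $y_2 = \bT x_2$ in $\cA'$ and any $\lambda \in [0,1]$, the convex combination $\lambda y_1 + (1-\lambda) y_2$ equals $\bT(\lambda x_1 + (1-\lambda) x_2)$, which lies in $\cA'$ since $\cA$ is convex. Compactness follows because $\bT$, being linear on a finite-dimensional space, is continuous, and continuous images of compact sets are compact.

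The interesting step is showing $\ext(\cA') \subseteq \{\bT x : x \in \ext(\cA)\}$. Fix $y \in \ext(\cA')$ and consider the fiber $\cF_y := \{x \in \cA : \bT x = y\}$. This set is nonempty (by definition of $\cA'$), convex (as the intersection of $\cA$ with the affine subspace $\bT^{-1}(y)$), and compact (closed subset of the compact set $\cA$). Since we work in finite dimensions, the Krein--Milman theorem (or its elementary finite-dimensional analogue) guarantees that $\cF_y$ has at least one extreme point; pick such an $x^\star \in \ext(\cF_y)$. By construction $\bT x^\star = y$, so it only remains to verify that $x^\star$ is in fact extreme in the larger set $\cA$.

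To do this, suppose for contradiction that $x^\star = \lambda x_1 + (1-\lambda) x_2$ with $x_1, x_2 \in \cA$ and $\lambda \in (0,1)$. Applying $\bT$ yields $y = \lambda \bT x_1 + (1-\lambda) \bT x_2$, and since $\bT x_1, \bT x_2 \in \cA'$ and $y$ is extreme in $\cA'$, we must have $\bT x_1 = \bT x_2 = y$. But then $x_1, x_2 \in \cF_y$, and the extremality of $x^\star$ in $\cF_y$ forces $x_1 = x_2 = x^\star$, contradicting the assumption that this was a nontrivial convex combination. Hence $x^\star \in \ext(\cA)$, which gives the desired inclusion.

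The main (mild) obstacle is the third step: the natural first attempt is to pick an arbitrary preimage $x$ of $y$ and hope it is extreme in $\cA$, which of course fails in general. The fix is to pass to an extreme point of the fiber $\cF_y$ and then transfer extremality from the small set to the large set using the fact that $y$ itself is extreme downstream. Everything else is a routine application of continuity and convexity in finite dimensions.
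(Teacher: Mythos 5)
Your proof is correct. The paper states this result as a ``Fact'' citing Bertsimas and Tsitsiklis and does not supply a proof of its own, so there is no in-paper argument to compare against; your argument, which passes to an extreme point of the fiber $\cF_y$ and transfers extremality downward, is the standard way to prove this, and all three steps are sound. One cosmetic remark: when you suppose $x^\star = \lambda x_1 + (1-\lambda) x_2$ ``for contradiction,'' you should state up front that $x_1 \neq x_2$ (otherwise the decomposition is trivial and there is nothing to contradict); alternatively, drop the contradiction framing and simply conclude from extremality of $x^\star$ in $\cF_y$ that $x_1 = x_2 = x^\star$, which directly shows $x^\star$ is extreme in $\cA$. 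Either phrasing makes the final step airtight.
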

\begin{fact}
	\label{fact:conv-ext}
	Let $\cA$ be a convex, compact set.
	If $\cA = \conv(\cB)$ for some set $\cB$, then $\ext(\cA) \subseteq \cB$.
\end{fact}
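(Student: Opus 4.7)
The plan is to show the contrapositive-flavored direct claim: take any $x \in \ext(\cA)$ and exhibit $x$ as an element of $\cB$. Since the hypothesis $\cA = \conv(\cB)$ is purely algebraic (the compactness of $\cA$ will play no real role — though it is consistent with the setting where we apply this), the proof hinges only on two ingredients: the definition of $\conv(\cB)$ as the set of finite convex combinations of elements of $\cB$, and the defining property of an extreme point.

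First I would fix $x \in \ext(\cA)$ and use $\cA = \conv(\cB)$ to write $x = \sum_{i=1}^{m}\lambda_i b_i$ for some finite $m \ge 1$, with $b_i \in \cB$, $\lambda_i > 0$, and $\sum_i \lambda_i = 1$ (we may discard any $b_i$ whose coefficient is zero). Observe that each $b_i$ lies in $\cA$ itself, because $\cB \subseteq \conv(\cB) = \cA$.

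Next I would argue by induction on $m$ that every $b_i$ must equal $x$. The base case $m=1$ is trivial since then $x = b_1$. For $m \ge 2$, write
\begin{equation*}
x = \lambda_1 b_1 + (1-\lambda_1)\, y, \qquad y := \frac{1}{1-\lambda_1}\sum_{i=2}^{m} \lambda_i b_i,
\end{equation*}
where $y \in \cA$ as a convex combination of elements of $\cA$, and $\lambda_1 \in (0,1)$ (since all $\lambda_i > 0$ and $m \ge 2$). By the definition of an extreme point, the decomposition forces $b_1 = x = y$, and the inductive hypothesis applied to the representation $x = y = \sum_{i=2}^{m}\frac{\lambda_i}{1-\lambda_1} b_i$ of length $m-1$ shows $b_i = x$ for all $i \ge 2$. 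Therefore $x = b_1 \in \cB$, giving $\ext(\cA) \subseteq \cB$.

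There is no real obstacle here: the statement is a classical observation, and the only subtlety worth flagging in writing is to note that convex combinations are by convention \emph{finite}, so the reduction step is legitimate, and that the inclusion $\cB \subseteq \cA$ is what lets us invoke the extreme-point property. Compactness of $\cA$ is not needed for the argument as stated; it is merely the setting in which Facts~\ref{fact:ext-point-linear-transform}--\ref{fact:conv-ext} will be applied later in the paper.
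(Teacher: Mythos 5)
Your proof is correct, and since the paper treats this as a standard fact cited from Bertsimas and Tsitsiklis without giving its own argument, there is no paper proof to compare against; your argument is the textbook one. You are also right that compactness plays no role here.

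One minor stylistic note: the full induction is more machinery than needed. Once you have written $x = \lambda_1 b_1 + (1-\lambda_1) y$ with $\lambda_1 \in (0,1)$ and $y \in \cA$, the extreme-point property already forces $b_1 = x$, which gives $x \in \cB$ immediately; you never need to establish that the remaining $b_i$ equal $x$ as well (and if $m = 1$, the conclusion is trivial). So you could drop the inductive framing and peel off a single term. This is not an error, only a simplification.
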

\begin{fact}[Number of vertices and vertex enumeration]
	\label{fact:vertex-enumeration}
	Let $\cA \subseteq \R^n$ be a bounded polytope defined by $m$ linear
	inequalities.
	The number of vertices of $\cA$ is at most ${m \choose n}$.
	Moreover, there is an algorithm that takes $e^{ O(n \log m)}$
	time and output all the vertices of $\cA$.\footnote{Throughout this paper, we
		assume the bit-complexity of linear inequalities is bounded.
	}
\end{fact}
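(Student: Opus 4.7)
The plan is to prove both claims by leveraging the standard characterization of vertices of a polytope as basic feasible solutions. Write $\cA = \{x \in \R^n : a_i^\top x \le b_i,\ i \in [m]\}$ for some vectors $a_i \in \R^n$ and scalars $b_i \in \R$. Recall that a point $v \in \cA$ is a vertex (equivalently, an extreme point, since $\cA$ is a bounded polytope) if and only if there exists a subset $S \subseteq [m]$ with $|S| = n$ such that (i) the vectors $\{a_i\}_{i \in S}$ are linearly independent, and (ii) $a_i^\top v = b_i$ for every $i \in S$. Thus, each vertex arises as the unique solution of some such $n \times n$ linear system indexed by $S$.

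For the first claim, observe that the above characterization implies a surjective map from subsets $S \subseteq [m]$ of size $n$ (for which the associated constraints are linearly independent and whose solution lies in $\cA$) to the vertices of $\cA$. Since the number of such subsets is at most $\binom{m}{n}$, the number of vertices is at most $\binom{m}{n}$. (Degeneracy can cause several subsets to map to the same vertex, but this only helps the upper bound.)

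For the algorithmic part, I would simply enumerate all $\binom{m}{n} \le m^n$ size-$n$ subsets of $[m]$. For each $S$, solve the $n \times n$ linear system $\{a_i^\top x = b_i\}_{i \in S}$; this can be done in $\poly(n)$ time (using, e.g., Gaussian elimination, and relying on the bit-complexity assumption in the footnote so that arithmetic remains polynomial-time). If the system is nonsingular and has a unique solution $x^S$, verify in $O(mn)$ time that $a_j^\top x^S \le b_j$ holds for every $j \in [m] \setminus S$; if so, add $x^S$ to the list of candidate vertices. Finally, deduplicate the list. The total running time is $\binom{m}{n} \cdot \poly(n,m) = e^{O(n \log m)}$, as required.

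The main obstacle I would anticipate is purely expository rather than mathematical, namely handling the degenerate case (in which a vertex is supported by more than $n$ tight constraints) when asserting that every vertex is produced by the enumeration; this is resolved by noting that any vertex $v$ still admits \emph{some} set of $n$ linearly independent tight constraints (since $v$ spans $\R^n$ via its tight rows), so $v$ is enumerated by the subset $S$ corresponding to such a choice. With that subtlety handled, both the counting bound and the enumeration procedure follow immediately from the basic-feasible-solution characterization.
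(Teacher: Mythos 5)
The paper states this as a background \emph{Fact} and gives no proof (it defers to Bertsimas and Tsitsiklis, cited at the top of the Convexity subsection), so there is no in-paper argument to compare against. Your proof is the standard basic-feasible-solution argument and is correct: you correctly identify that every vertex of a bounded polytope in $\R^n$ is the unique solution of some $n \times n$ subsystem with linearly independent rows, handle the degenerate case (more than $n$ tight constraints at a vertex) by noting that one can still extract $n$ independent tight rows, obtain the $\binom{m}{n}$ bound by surjectivity onto vertices, and get the $e^{O(n\log m)}$ runtime by brute-force enumeration with polynomial-time linear-system solving and feasibility checking under the bounded-bit-complexity assumption. This is exactly the textbook route the paper is implicitly invoking; nothing is missing.
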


\begin{fact}[Extreme points of channels]
	\label{fact:ext-point-channels}
	The set of extreme points of $\cT_{\ell,k}$ is the set of all deterministic channels from
	$[k]$ to $[\ell]$.
\end{fact}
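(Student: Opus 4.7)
The plan is to observe that $\cT_{\ell,k}$ decomposes as a Cartesian product of $k$ independent simplices, one for each input $x \in [k]$, and then invoke the standard fact that extreme points of a Cartesian product of convex sets are precisely the products of extreme points of the factors.

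More concretely, I would first note that a channel $\bT \in \cT_{\ell,k}$, viewed as an $\ell \times k$ column-stochastic matrix, is determined by its $k$ columns $\bT_{\cdot,1}, \ldots, \bT_{\cdot,k}$, each of which lies independently in the probability simplex $\Delta_\ell$. Thus, identifying $\cT_{\ell,k}$ with $(\Delta_\ell)^k$ via $\bT \mapsto (\bT_{\cdot,1}, \ldots, \bT_{\cdot,k})$ gives an affine isomorphism between convex compact sets, so it suffices to identify $\ext((\Delta_\ell)^k)$.

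Next, I would establish the general fact: for convex compact sets $A_1, \ldots, A_k$ in finite-dimensional spaces, $\ext(A_1 \times \cdots \times A_k) = \ext(A_1) \times \cdots \times \ext(A_k)$. The inclusion $\supseteq$ follows because if $a_j \in \ext(A_j)$ for each $j$, any decomposition $(a_1,\ldots,a_k) = \lambda(b_1,\ldots,b_k) + (1-\lambda)(c_1,\ldots,c_k)$ forces $a_j = \lambda b_j + (1-\lambda) c_j$ coordinatewise, hence $b_j = c_j = a_j$. For the inclusion $\subseteq$, if some coordinate $a_j$ fails to be extreme in $A_j$, we can write $a_j = \tfrac{1}{2}(b + c)$ with $b \ne c$ in $A_j$, and perturbing only that coordinate yields two distinct points in $A_1 \times \cdots \times A_k$ whose midpoint is $(a_1,\ldots,a_k)$.

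Finally, I would apply this to $A_j = \Delta_\ell$. The extreme points of $\Delta_\ell$ are exactly the standard basis vectors $e_1, \ldots, e_\ell$ (any vector in $\Delta_\ell$ with two nonzero coordinates can be written as a convex combination of two distinct points in $\Delta_\ell$ by shifting mass between those coordinates). Therefore the extreme points of $\cT_{\ell,k}$ correspond to matrices whose every column is a standard basis vector, which is exactly the definition of a deterministic channel $\bT : [k] \to [\ell]$. There is no real obstacle here; the argument is purely structural, and the only care needed is to state the Cartesian-product lemma cleanly, since the paper uses this fact repeatedly.
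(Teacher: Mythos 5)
Your proof is correct. The paper states this as a standard fact without proof (referring the reader generally to Bertsimas and Tsitsiklis for convexity background), so there is no paper proof to compare against; your Cartesian-product argument is the natural, self-contained way to establish it, and all three steps (the affine identification $\cT_{\ell,k}\cong(\Delta_\ell)^k$, the product-of-extreme-points lemma, and the identification of $\ext(\Delta_\ell)$ with the standard basis vectors) are sound.
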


\subsection{Local Privacy}

We state standard facts from the privacy literature here.

\begin{definition}[Randomized response]
	\label{def:rand-response}
	For an integer $k \geq 2$, the $k$-ary \emph{randomized response channel with
	privacy parameter $\epsilon$} is a channel from $[k]$ to $[k]$ defined as
	follows: for any $i \in [k]$, $\bT(i)=i$ with probability $\frac{e^\epsilon}{
			(k-1) + e^\epsilon}$ and $\bT(i) = j$ with probability $\frac{1}{(k-1) +
			e^\epsilon}$, for any $j \in [k]\setminus \{i\}$.
	The standard randomized response \cite{War65} corresponds to $k = 2$, which we
	denote by $\RR^\epsilon$. We omit $\epsilon$ in the superscript when it is clear
	from
	context.
\end{definition}
We will also use the following result on the extreme points for \Cref{lem:worst-case-samp-comp}.
\begin{fact}[{Extreme points of the LDP polytope in special cases~\cite{HolLM17}}]
	\label{fact:extreme-pt-special-case}
	We mention all the extreme points of $\cP^\epsilon_{\ell,k}$ (up to permutation
	of rows and columns; if a channel is an extreme point, then any permutation of
	rows and/or columns is an extreme point) below for some special cases.
	\begin{enumerate}
		\item (Trivial extreme points) A channel with one row of all ones and the rest of the rows with zero values is always an extreme point of $\cP^{\epsilon}_{\ell,k}$.
		      We call such extreme points \emph{trivial}.
		\item ($\ell=2$ and $k\geq 2$) All non-trivial extreme points of $\cP^\epsilon_{2,k}$ are of the form (up to permutation of rows):
		      \begin{align*}
	\left[
	 \begin{matrix}
	 a & a &  \cdots & a & 1-a & 1 -a &\cdots & 1-a \\
	 1- a & 1- a &  \cdots & 1-a & a & a &\cdots& a
	 \end{matrix}
	 \right],
	 \end{align*}
		      where $a/(1-a) = e^\epsilon$.
		      In other words, the columns are of only two types, containing $a$ and $1-a$.
	\end{enumerate}

\end{fact}

\subsection{Hypothesis Testing}

In this section, we state some standard facts regarding hypothesis testing and
divergences that will be used repeatedly.

\begin{fact}[Hypothesis testing and divergences; see, for example, Tsybakov~\cite{Tsybakov09}]
	Let $p$ and $q$ be two arbitrary distributions. 
	Then:
	\label{fact:sample-complexity}
	\label{fact:sample-complexity-eps-small}
	\label{fact:tv-hel}
	\label{fact:output-sze-same}
	\label{fact:id-channel}
	\begin{enumerate}
		\item We have $\dtv^2(p,q) \leq \hel^2(p,q) \leq 2\dtv(p,q)$.
		\item (Sample complexity of non-private hypothesis testing) We have $\nstar(p,q) \asymp \frac{1}{\hel^2(p,q)}$. 
		\item (Sample complexity in the high-privacy regime) For every $\epsilon\leq 1$, we have $\nstar(p,q,\epsilon) \asymp \frac{1}{\epsilon^2\dtv^2(p,q)}$.
		      See the references~\cite[Theorem 1]{DucJW18}, \cite[Theorem 2]{AsoZha22}, and~\cite[Theorem
			      5.1]{JosMNR19}.
		\item (Restricting the size of the output domain) Let $p$ and $q$ be distributions over $[k]$.
		      Then $\nstar(p,q,\epsilon) \asymp \nstar(p,q,\epsilon, k)$.
		      This follows by applying Theorem 2 in Kairouz, Oh, and Viswanath \cite{KaiOV16} to $\hel^2(\cdot,\cdot)$.
		\item (Choice of identical channels in \Cref{def:shtgeneric}) Let $\bT$ be a channel that maximizes $\hel^2(\bT p, \bT q)$ among all channels in $\cC$.
		      Then the sample complexity of hypothesis testing under the channel constraints
		      of $\cC$ is $\Theta\left(\frac{1}{\hel^2(\bT p, \bT q)}\right)$.
		      See Lemma 4.2 in Pensia, Jog, and Loh~\cite{PenJL22}.
	\end{enumerate}
\end{fact}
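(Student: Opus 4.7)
The plan is to verify each of the five items in turn; all are standard results for which references are already indicated in the statement, so the task reduces to sketching how each claim is obtained.

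For item 1, I would argue $\dtv^2(p,q) \leq \hel^2(p,q)$ by writing $2\dtv(p,q) = \sum_i |\sqrt{p_i} - \sqrt{q_i}|\,(\sqrt{p_i} + \sqrt{q_i})$, applying Cauchy--Schwarz, and using $\sum_i (\sqrt{p_i}+\sqrt{q_i})^2 = 2 + 2\sum_i \sqrt{p_i q_i} \leq 4$; the reverse bound $\hel^2(p,q) \leq 2\dtv(p,q)$ follows term-by-term from $(\sqrt{p_i} - \sqrt{q_i})^2 \leq |\sqrt{p_i}-\sqrt{q_i}|\,(\sqrt{p_i}+\sqrt{q_i}) = |p_i - q_i|$ and summation. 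For item 2, I would combine the tensorization identity $\haff(p^{\otimes n}, q^{\otimes n}) = \haff(p,q)^n$ with the standard bound relating the likelihood ratio test's error probability to the Hellinger affinity for the upper bound, and invoke Le Cam's two-point method (again via the tensorized Hellinger affinity) for the matching lower bound; both are spelled out in Tsybakov~\cite{Tsybakov09}.

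For item 3, the lower bound $\nstar(p,q,\epsilon) \gtrsim 1/(\epsilon^2 \dtv^2(p,q))$ in the regime $\epsilon \leq 1$ is proved in~\cite{DucJW18} and~\cite{AsoZha22} via strong data-processing inequalities for $\epsilon$-LDP channels (contracting KL by a factor of $\epsilon^2$ against the squared total variation). For the matching upper bound I would pick the composite channel $\RR^\epsilon \times \bT_{\text{Scheffe}}$: the Scheff\'e step reduces to Bernoulli outputs with total variation exactly $\dtv(p,q)$, and subsequent $\epsilon$-randomized response ($\epsilon \leq 1$) produces a binary pair with total variation $\Theta(\epsilon\,\dtv(p,q))$, hence Hellinger divergence $\Theta(\epsilon^2 \dtv^2(p,q))$ (by item 1, applied to Bernoullis where the two quantities are comparable). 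Item 2 then gives the claimed sample complexity.

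For item 4, I would invoke Theorem 2 of Kairouz, Oh, and Viswanath~\cite{KaiOV16}: observing that $\hel^2(\bT p, \bT q) = \sum_y (\sqrt{(\bT p)_y} - \sqrt{(\bT q)_y})^2$ decomposes as a sum over output symbols $y$ of a function depending only on the $y$-th row of $\bT$, it is ``sublinear'' in the sense of~\cite{KaiOV16}, and their result then implies the existence of an optimal $\epsilon$-LDP channel with output alphabet of size at most $k$. Item 5 is Lemma 4.2 of Pensia, Jog, and Loh~\cite{PenJL22}: by convexity of $\cC$, any non-interactive rule $\cR$ can be replaced, up to a constant-factor cost in sample complexity, by one in which every user applies the same channel $\bT \in \cC$; the $Y_i$'s are then i.i.d.\ from $\bT p$ or $\bT q$, and item 2 gives sample complexity $\Theta(1/\hel^2(\bT p, \bT q))$, attained at the $\bT$ maximizing the Hellinger divergence. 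The only delicate point across the five items is the careful tracking of universal constants in the various reductions, which is already handled in the cited sources.
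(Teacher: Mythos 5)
Your proposal is correct and matches the paper's treatment: the paper states this as a Fact without proof and defers to exactly the references you use (Tsybakov for items 1--2, Duchi--Jordan--Wainwright/Asoodeh--Zhang/Joseph et al.\ for item 3, Theorem 2 of Kairouz--Oh--Viswanath applied to the sublinear objective $\hel^2$ for item 4, and Lemma 4.2 of Pensia--Jog--Loh for item 5), and your sketches are the standard arguments behind those citations. The only loose spot is the parenthetical in item 3 --- item 1 by itself gives only the one-sided bound $\hel^2 \geq \dtv^2$ on the randomized-response outputs, not $\hel^2 \asymp \dtv^2$ --- but that one-sided bound is all the upper bound on $\nstar(p,q,\epsilon)$ needs (and the two-sided comparability does hold here because for $\epsilon \leq 1$ the output probabilities are bounded away from $0$ and $1$), so nothing breaks.
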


\begin{fact}[{Preservation of Hellinger distance under communication constraints (Theorem 1 in Bhatt, Nazer, Ordentlich, and Polyanskiy~\cite{BhaNOP21} and Corollary 3.4 in Pensia, Jog, and Loh~\cite{PenJL22})}]
	\label{fact:comm-constraints}
	Let $p$ and $q$ be two distributions on $[k]$.
	Then for any $\ell \in \N$,
	there exists a channel $\bT$ from $[k]$ to $[\ell]$, which can be computed in time polynomial in $k$, such that
	\begin{align}
 \label{eq:comm-const}
	\hel^2(p,q) \lesssim \hel^2(\bT p, \bT q)\cdot \left(1 + \frac{\ell}{\min(k,
				\log(1/\hel^2(p,q)))}\right).
	\end{align}
    Moreover, this bound is tight in the following sense: for every choice of $\rho \in (0,1)$, there exist two distributions $p$ and  $q$ such that $\hel^2(p,q) \asymp \rho$, and for every channel $\bT \in \cT_{\ell,k}$,  the right-hand side of inequality \Cref{eq:comm-const} is further upper-bounded by $O(\rho)$.
\end{fact}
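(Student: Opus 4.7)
The plan is to construct a deterministic threshold channel $\bT \colon [k] \to [\ell]$ that groups inputs by their likelihood ratios $L(x) := p(x)/q(x)$, and then to bound how little Hellinger affinity is lost by this coarsening.

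The construction starts by setting $M := \min(k, \log(1/\hel^2(p,q)))$ and sorting $[k]$ by $L$ in $O(k\log k)$ time. First I would truncate the likelihood range: elements with $L(x) \notin [e^{-CM}, e^{CM}]$ (for a suitable constant $C$) contribute negligibly to the Hellinger affinity $\haff(p,q) = \sum_x \sqrt{p(x) q(x)}$, since when $L(x) \geq e^{CM}$ we have $\sqrt{p(x) q(x)} \leq e^{-CM/2} p(x)$, so the total tail contribution is at most $e^{-CM/2} = \hel^{C}$, negligible compared to $\hel^2(p,q)$ for $C$ large enough. The two tails are assigned to two dedicated boundary bins, and the bulk log-range $[-CM, CM]$ is partitioned into the remaining $\ell - 2$ equal-width log-intervals; within each bulk bin, $L$ varies multiplicatively by at most $e^{O(M/\ell)}$.

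The analysis of the Hellinger loss starts from the per-bin Cauchy--Schwarz decomposition $\haff(\bT p, \bT q) - \haff(p,q) = \sum_i [\sqrt{P(S_i) Q(S_i)} - \sum_{x \in S_i} \sqrt{p(x) q(x)}]$, where $P(S_i) := \sum_{x \in S_i} p(x)$ and similarly for $Q(S_i)$. The algebraic identity $(\sqrt{P(S_i) Q(S_i)})^2 - (\sum_{x \in S_i}\sqrt{p(x) q(x)})^2 = \sum_{x < y,\, x,y \in S_i} q(x) q(y) (\sqrt{L(x)} - \sqrt{L(y)})^2$, combined with the bounded multiplicative spread of $L$ within each bulk bin, lets one bound each slack in terms of the bin's pre-channel Hellinger affinity. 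Summing over bins and relating the result back to $\hel^2(\bT p, \bT q)$ yields the claimed multiplicative factor. Polynomial-time computability in $k$ follows immediately from the sorting step.

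For the tightness direction, I would exhibit $(p,q)$ whose log-likelihood ratios are spread uniformly over $[-\log(1/\rho), \log(1/\rho)]$ and whose Hellinger mass is balanced across logarithmic scales---for instance, a Zipf-type construction for $q$ paired with a gently oscillating multiplicative perturbation for $p$. Any $\ell$-output channel must, by pigeonhole, merge a log-likelihood interval of width $\Omega(\log(1/\rho)/\ell)$, and within such an interval a direct calculation shows that a constant fraction of the Hellinger contribution is lost. The hard part of the existence argument is getting the \emph{right order} in the per-bin estimate: a naive Jensen-gap bound for $\sqrt{\cdot}$ around the mean likelihood gives a factor quadratic in the per-bin log-width $M/\ell$, whereas the statement demands the linear rate $1 + \ell/M$. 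Closing that gap requires a more delicate per-bin bookkeeping that treats each bin as its own miniature hypothesis-testing problem and tracks its Hellinger mass locally, rather than discharging the entire Cauchy--Schwarz defect against the aggregate $\haff(p,q)$.
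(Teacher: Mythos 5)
The paper does not prove this statement; it imports it from Bhatt--Nazer--Ordentlich--Polyanskiy and from Corollary~3.4/Theorem~3.2 of Pensia--Jog--Loh, so your proposal has to be judged against those proofs. As written, your argument is not complete, and the gap you concede in your last two sentences is the entire substance of the result. The specific failure is the equal-log-width partition of $[-CM,CM]$ into $\ell-2$ bins of width $\Theta(M/\ell)$: this is the wrong construction for the bins at or near likelihood ratio $1$. Inside such a bin the deviations $\delta_x = p_x/q_x - 1$ can have wildly different magnitudes (and opposite signs if a bin straddles $1$), and merging then collapses the bin's Hellinger contribution rather than losing a controlled fraction of it. For instance, a single element with $q_x \asymp \hel^2(p,q)$ and $\delta_x \asymp 1$ carries $\Theta(\hel^2(p,q))$ of the divergence, but once it is averaged into a bin of total mass $\Theta(1)$ whose other elements have $\delta \approx 0$, the merged bin contributes only $\Theta(\hel^4(p,q))$. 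No choice of $C$ or of equal-width boundaries repairs this. The cited proof uses a different binning: it separates the comparable elements ($p_x/q_x \in [1/2,2]$) and groups them geometrically by $|\delta_x|$ into $O(\min(k,\log(1/\hel^2(p,q))))$ single-sign, comparable-magnitude groups, each of which provably retains a constant fraction of its own contribution under merging (no cancellation, no magnitude mismatch); the reduction to $\ell$ outputs then keeps the $\ell$ heaviest groups, and that selection step is exactly where the factor $\min(k,\log(1/\hel^2))/\ell$ enters. Your closing remark that one must "treat each bin as its own miniature hypothesis-testing problem and track its Hellinger mass locally" is a description of this missing argument, not a substitute for it.

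Two smaller points. First, you are chasing a slightly wrong quantitative target: as the fact is actually invoked in the paper (the proofs of \Cref{thm:samp-comp-uprbds} and \Cref{lem:comm-privacy-effect}), the multiplicative loss is $1 + \min(k,\log(1/\hel^2(p,q)))/\ell$, i.e., constant-factor preservation once $\ell \gtrsim \log(1/\hel^2(p,q))$ and a $\log(1/\hel^2(p,q))$-factor loss at $\ell = 2$; the fraction in the displayed inequality appears to be inverted, and taking it at face value (a constant factor at $\ell=2$) would contradict the tightness claim in the same statement. This is the source of the "quadratic versus linear" confusion you report. Second, the tightness direction is only gestured at; the matching example in the cited work spreads the Hellinger mass uniformly over $\Theta(\log(1/\rho))$ likelihood-ratio scales, and the pigeonhole step there relies on precisely the per-bin collapse estimate that your construction has not established.
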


\section{Locally Private Simple Hypothesis Testing}
\label{sec:hypothesis_testing}
In this section, we provide upper and lower bounds
for locally private simple hypothesis testing.
This section is organized as follows:
In \Cref{sec:binary-dist-samp-comp},
we derive instance-optimal bounds when both distributions are binary.
We then prove minimax-optimal bounds for general distributions (with
support size at least three)\new{:}
Lower bounds on sample complexity are proved in
\Cref{sec:worst-case-samp-comp-lwr-bds} and upper bounds in
\Cref{sec:gen-dist-samp-comp-upr-bds}.
Proofs of some of the technical arguments are deferred to the appendices.

\subsection{Binary Distributions and Instance-Optimality of Randomized Response}
\label{sec:binary-dist-samp-comp}

We first consider the special case when $p$ and $q$ are both binary
distributions.
Our main result characterizes the instance-optimal sample complexity in this
setting:
\LemBinaryRRSampleComplexity*
By \Cref{fact:sample-complexity}, the proof of \Cref{lem:binary-samp-comp}
is a consequence of the following bound on the strong data processing inequality for
randomized
responses:
\begin{restatable}[Strong data processing inequality for Hellinger
		divergence]{proposition}{sdpiRRbinary}
	\label{prop:SDPI-binary}
	Let $p$ and $q$ be two binary distributions. Then
	\begin{align*}
		\max_{\bT \in \cP^\epsilon_{2,2}}	\hel^2\left( \bT p, \bT q \right) & \asymp
		\begin{cases}
			\epsilon^2 \cdot \dtv^2(p,q), 
				& \text{if } \epsilon \leq 1\\ 
			\min\left(e^{\epsilon} \cdot \dtv^2(p,q), \hel^2(p,q) \right), 
				& \text{otherwise.}
		\end{cases}
	\end{align*}
	Moreover, the maximum is achieved by the randomized response channel.
\end{restatable}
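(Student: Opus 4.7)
I would parametrize the problem explicitly: assume WLOG $p_0 \geq q_0$, write $\delta := p_0 - q_0 = \dtv(p,q)$, and write a generic channel $\bT \in \cP^\epsilon_{2,2}$ as $\bT = \begin{pmatrix} a & b \\ 1-a & 1-b\end{pmatrix}$, with WLOG $a \geq b$. The $\epsilon$-LDP constraint then collapses to $a - b \leq (e^\epsilon-1)\min(b,1-a)$. Setting $s := (\bT p)_0$ and $t := (\bT q)_0$, one has $s - t = (a-b)\delta$, and I would work throughout with the identity $\hel^2(\bT p,\bT q) = (s-t)^2/(\sqrt{s}+\sqrt{t})^2 + (s-t)^2/(\sqrt{1-s}+\sqrt{1-t})^2$.

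\textbf{Upper bound on the maximum.} The key observation is that $s,t \geq b$ and $1-s, 1-t \geq 1-a$, giving $(\sqrt{s}+\sqrt{t})^2 \geq 4b$ and $(\sqrt{1-s}+\sqrt{1-t})^2 \geq 4(1-a)$. Combined with $(a-b)^2/b \leq (e^\epsilon-1)(a-b)$ and $(a-b)^2/(1-a) \leq (e^\epsilon-1)(a-b)$ from the LDP constraint, this produces $\hel^2(\bT p,\bT q) \lesssim (e^\epsilon-1)(a-b)\delta^2$. Since $a - b \leq \min(1, e^\epsilon-1)$, this is $\asymp \epsilon^2\delta^2$ when $\epsilon\leq 1$ and $\asymp e^\epsilon\delta^2$ when $\epsilon > 1$. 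Combining with the data-processing inequality $\hel^2(\bT p,\bT q) \leq \hel^2(p,q)$ yields the claimed upper bound.

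\textbf{Lower bound via RR.} For $\RR$, I substitute $b = 1/(e^\epsilon+1)$ and $a-b = \alpha b$ with $\alpha := e^\epsilon -1$, obtaining $s = b(1+\alpha p_0)$, $t = b(1+\alpha q_0)$, $1-s = b(1+\alpha(1-p_0))$, and $1-t = b(1+\alpha(1-q_0))$. Using $(\sqrt{x}+\sqrt{y})^2 \leq 4\max(x,y)$ inside each denominator, I arrive at
\[
\hel^2(\RR p, \RR q) \;\geq\; \frac{\alpha^2 b\,\delta^2}{4}\left[\frac{1}{1+\alpha p_0} + \frac{1}{1+\alpha(1-q_0)}\right].
\]
Applying the three elementary estimates $\alpha/(1+\alpha x) \asymp \min(\alpha, 1/x)$ for $x\in(0,1]$, $\alpha^2 b \asymp \alpha \min(\alpha,1)$, and $\min(\alpha,A)+\min(\alpha,B)\asymp\min(\alpha,A+B)$ reduces this to $\hel^2(\RR p,\RR q) \gtrsim \min(\alpha,1)\,\delta^2\,\min\!\big(\alpha,\,1/p_0 + 1/(1-q_0)\big)$. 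Finally, using $(\sqrt{x}+\sqrt{y})^2 \asymp x+y$ yields $\hel^2(p,q)/\delta^2 \asymp 1/(p_0+q_0) + 1/(2-p_0-q_0) \asymp 1/p_0 + 1/(1-q_0)$ (the last step using $q_0 \leq p_0$), so the bound becomes $\hel^2(\RR p,\RR q) \gtrsim \min(\alpha,1)\,\min(\alpha\delta^2, \hel^2(p,q))$, matching the upper bound in both the $\epsilon\leq 1$ and $\epsilon>1$ regimes.

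\textbf{Main difficulty.} The trickiest step is the large-$\epsilon$ lower bound: one must show that $\RR$, which perturbs every input symmetrically, automatically ``saturates'' at $\hel^2(p,q)$ once $e^\epsilon\delta^2 \gtrsim \hel^2(p,q)$. The interpolation identity $\alpha/(1+\alpha x) \asymp \min(\alpha, 1/x)$ is the crux: its crossover at $x \asymp 1/\alpha$ coincides precisely with the threshold at which $e^\epsilon\delta^2$ equals $\hel^2(p,q)$, and the representation $\hel^2(p,q) \asymp \delta^2\bigl(1/p_0+1/(1-q_0)\bigr)$ converts this crossover into the stated minimum.
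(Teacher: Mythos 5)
Your proof is correct, and it takes a genuinely different route from the paper's. The paper first reduces to the randomized response channel via a convexity argument: $\hel^2$ is convex on the joint range $\cA$, so the maximum is attained at an extreme point of $\cA$, and by the classification of extreme points of $\cP^\epsilon_{2,2}$ (\Cref{fact:extreme-pt-special-case}) the only non-trivial extreme point is $\RR^\epsilon$; it then computes $\hel^2(\RR^\epsilon p, \RR^\epsilon q)$ using the single normalization $\hel^2(\Ber(p),\Ber(q)) \asymp \dtv^2/\max(p,q)$ (\Cref{claim:Hellinger-binary-taylor}, which requires $\min(p,q)\le 1/2$). You instead prove the upper bound directly for \emph{every} channel in $\cP^\epsilon_{2,2}$ from the denominator bounds $(\sqrt{s}+\sqrt{t})^2 \ge 4b$, $(\sqrt{1-s}+\sqrt{1-t})^2 \ge 4(1-a)$ together with the LDP constraint $a-b \le (e^\epsilon-1)\min(b,1-a)$, which makes the argument self-contained (no appeal to the extreme-point classification of the LDP polytope), and your lower bound keeps both terms of the Hellinger divergence via the symmetric representation $\hel^2(p,q) \asymp \delta^2\bigl(1/p_0 + 1/(1-q_0)\bigr)$ rather than normalizing to $\max(p,q)$. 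Both computations check out, including the crossover analysis via $\alpha/(1+\alpha x)\asymp\min(\alpha,1/x)$. The one thing your route buys less of: the paper's extreme-point reduction shows $\RR^\epsilon$ attains the \emph{exact} maximum (the ``moreover'' clause), whereas your argument only shows it attains the maximum up to universal constants; this is immaterial for the $\asymp$ statement and for all downstream uses, but strictly speaking the exact-optimality claim needs the convexity/extreme-point step.
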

\begin{proof}
   Let $\cA = \{(\bT p, \bT q): \bT \in \cP^\epsilon_{2,2}\}$ be the joint range of $p$ and $q$ under $\epsilon$-LDP privacy constraints.
    Since $\cA$ is a convex set and $\hel^2$ is a convex function over $\cA$,
    the maximizer of $\hel^2$ in $\cA$ is an extreme point of $\cA$.
    Since $\cA$ is a linear transformation of $\cP^\epsilon_{2,2}$,
    \Cref{fact:ext-point-linear-transform} implies that any  extreme point of
    $\cA$ is obtained by using a channel $\bT$ corresponding to an extreme point of $\cP_{2,2}^\epsilon$.
    By \Cref{fact:extreme-pt-special-case}, the only extreme point of $\cP_{2,2}^\epsilon$ is the randomized response channel $\RR^\epsilon$.
    Thus, in the rest of the proof, we consider $\bT = \RR^\epsilon$.

	By abusing notation, we will also use $p$ and $q$ to denote the probabilities of
	observing $1$ under the two respective distributions.
	Without loss of generality, we will assume that $0 \leq p \leq q$ and $p \leq
		1/2$.
	We will repeatedly use the following claim, which is proved in
	\Cref{sec:supplementary}:
	\begin{restatable}[Approximation for Hellinger divergence of binary
			distributions]{claim}{ApprxHel}
		\label{claim:Hellinger-binary-taylor}
		Let $p,q \in [0,1]$.
		Let $\Ber(p)$ and $\Ber(q)$ be the corresponding Bernoulli distributions with $
			\min(p,q) \leq 1/2$.
		Then
		\begin{align*}
			\hel^2 \left( \Ber(p), \Ber(q) \right) \asymp
			\frac{\dtv^2(\Ber(p),\Ber(q))}{\max(p,q)}.
		\end{align*}
	\end{restatable}
	Applying \Cref{claim:Hellinger-binary-taylor}, we obtain
	\begin{align}
	\label{eq:rr-binary-orign}
		\hel^2(p,q) \asymp \frac{\dtv^2(p,q)}{q}.
	\end{align}
	We know that the transformed distributions $p':=\RR^\epsilon p$ and $q':=\RR^\epsilon
		q$ are binary distributions;
	by abusing notation, let $p'$ and $q'$ also be the corresponding real-valued parameters associated with
	these binary distributions.
	By the definition of the randomized response, we have
	\begin{align}
	\label{eq:rr-binary}
		p' :=
		\frac{p(e^{\epsilon } - 1) + 1 }{1 + e^{\epsilon } }, \quad \text{and} \quad q' :=
		\frac{q(e^{\epsilon } - 1) + 1 }{1 + e^{\epsilon } }.
	\end{align}
	Consequently, we have $0 \leq p' \leq q'$ and $p' \leq 1/2$.
	We directly see that
	\begin{align*}
		\dtv(p',q') = q' - p' =
		\frac{(q-p)(e^{\epsilon} - 1)}{e^{\epsilon}+ 1} = \dtv(p,q) \cdot
		\frac{e^{\epsilon} - 1}{e^{\epsilon}+ 1}.
	\end{align*}
	We now apply \Cref{claim:Hellinger-binary-taylor} below to the
	distributions $p'$ and $q'$:
	\begin{align*}
		\hel^2(p', q') &\asymp
		\frac{\dtv^2(p',q')}{q'} \\ 
		&= \dtv^2(p,q) \cdot \left( \frac{e^{\epsilon} -
			1}{e^{\epsilon}+ 1} \right)^2 \cdot \frac{1 + e^\epsilon}{q (e^{\epsilon} -
			1)+1} && \left(\text{using equation } \Cref{eq:rr-binary}\right)\\ 
		&\asymp \dtv^2(p,q) \cdot \frac{(e^{\epsilon} - 1)^2}{e^{\epsilon}+ 1} \cdot\min
		\left( 1, \frac{1}{q(e^\epsilon -1)} \right) 
		&& \left(\text{using $\frac{1}{a+b} \asymp \min\left(\frac{1}{a}, \frac{1}{b}\right)$ for $a,b > 0$ } \right)
		\\
		 &\asymp \dtv^2(p,q)\cdot 
		\frac{(e^{\epsilon} - 1)^2}{e^{\epsilon}+ 1} \cdot \min \left( 1,
		\frac{\hel^2(p,q)}{\dtv^2(p,q)(e^\epsilon -1)} \right) && \Bigg(\text{using equation } \Cref{eq:rr-binary-orign} {\text{ and } } \\  
       &&&  \,\,\,\,{\min(1,a) \asymp \min(1,b) \text { if } a \asymp b }\Bigg) \\
&\asymp \min \left(
		\dtv^2(p,q) \cdot \frac{(e^{\epsilon} - 1)^2}{e^{\epsilon}+ 1}, \hel^2(p,q)
		\cdot \frac{e^{\epsilon} - 1}{e^{\epsilon}+ 1} \right) \\ 
		&\asymp
		\begin{cases}
			\min \left(
		\dtv^2(p,q) \cdot \epsilon^2\,, \hel^2(p,q)
		\cdot \epsilon\right) , & \text{if } \epsilon
			\leq 1,\\ 
			\min \left(
		\dtv^2(p,q) \cdot e^\epsilon\,, \hel^2(p,q) \right), &
			\text{otherwise,}
		\end{cases}
    && {\left(\text{using $e^{\epsilon} -1 \asymp \epsilon$ for $\epsilon \leq 1$ and $e^\epsilon$ otherwise} \right)}
    \\
		&\asymp
		\begin{cases}
			\epsilon^2 \dtv^2(p,q), & \text{if } \epsilon
			\leq 1, \\ \min\left( \dtv^2(p,q) e^{\epsilon}, \hel^2(p,q) \right), &
			\text{otherwise,}
		\end{cases}
	\end{align*}
	where the last step uses the inequality $\dtv^2(p,q) \leq \hel^2(p,q)$ from
	\Cref{fact:tv-hel}.
\end{proof}

\subsection{General Distributions: Lower Bounds and Higher Cost of Privacy}
\label{sec:worst-case-samp-comp-lwr-bds}

In this section, we establish lower bounds for the sample complexity of private
hypothesis testing for general
distributions.
In the subsequent section, the lower bounds will be shown to be tight up to
logarithmic factors.

We formally state the lower bound in the statement below:
\LemSampCompWorstCase*
We provide the proof below.
We refer the reader to \Cref{rem:binary-vs-worst} for further discussion on
differences between the worst-case sample complexity of general distributions and the sample complexity of binary distributions (cf.\
\Cref{lem:binary-samp-comp}).
We note that a similar construction is mentioned in Canonne, Kamath, McMillan, Smith, and Ullman~\cite[Section 1.3]{CanKMSU19}; however, their focus is on the \emph{central model} of differential privacy.

\subsubsection{Proof of \Cref{lem:worst-case-samp-comp}}
\begin{proof}
	The case when $\epsilon \leq 1$ follows from
	\Cref{fact:sample-complexity-eps-small}.
	Thus, we set $\epsilon \geq 1$ in the remainder of this section.
	We start with a helpful approximation for computing the Hellinger divergence,
	proved in \Cref{sec:supplementary}:
	\begin{restatable}[Additive approximation for $\sqrt{\cdot}$
		]{claim}{ClApproxHel}
		\label{cl:approx-sqRt}
		There exist constants $0< c_1 \leq c_2 $ such that for $0 < y \leq x $, we have
		 $
			c_1\cdot \frac{y^2}{x} \leq (\sqrt{x} - \sqrt{x - y})^2 \leq c_2 \cdot
			\frac{y^2}{x} $.
	\end{restatable}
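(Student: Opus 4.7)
The plan is to rationalize the difference of square roots. Starting from the identity $(\sqrt{x} - \sqrt{x-y})(\sqrt{x} + \sqrt{x-y}) = x - (x-y) = y$, I obtain
\[
\sqrt{x} - \sqrt{x-y} = \frac{y}{\sqrt{x} + \sqrt{x-y}},
\]
so that squaring both sides gives $(\sqrt{x} - \sqrt{x-y})^2 = \frac{y^2}{(\sqrt{x} + \sqrt{x-y})^2}$. This reduces the entire claim to showing that the denominator $(\sqrt{x} + \sqrt{x-y})^2$ is within universal constant factors of $x$.

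To establish this two-sided bound on the denominator, I will invoke the hypothesis $0 < y \leq x$, which ensures $0 \leq x - y \leq x$, and thus $0 \leq \sqrt{x-y} \leq \sqrt{x}$. Adding $\sqrt{x}$ to each part yields $\sqrt{x} \leq \sqrt{x} + \sqrt{x-y} \leq 2\sqrt{x}$, whence $x \leq (\sqrt{x} + \sqrt{x-y})^2 \leq 4x$. Substituting back into the previous display gives
\[
\frac{y^2}{4x} \;\leq\; (\sqrt{x} - \sqrt{x-y})^2 \;\leq\; \frac{y^2}{x},
\]
which establishes the claim with $c_1 = 1/4$ and $c_2 = 1$.

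I do not anticipate any real obstacle: the hypothesis $y \leq x$ is used both to make $\sqrt{x-y}$ well-defined and to lower-bound the denominator by $\sqrt{x}$. The conjugate-rationalization trick is entirely standard, and the resulting constants are explicit and tight up to small numerical factors. (An alternative route would be a first-order Taylor expansion of $f(t) = \sqrt{t}$ around $t = x$, using $f'(x) = 1/(2\sqrt{x})$ together with a Lagrange remainder bound on the second derivative, but the conjugate identity gives cleaner constants with less work.)
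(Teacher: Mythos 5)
Your proof is correct, and the constants $c_1 = 1/4$, $c_2 = 1$ are right: the conjugate identity $(\sqrt{x}-\sqrt{x-y})(\sqrt{x}+\sqrt{x-y}) = y$ is valid, the hypothesis $0 < y \leq x$ does give $\sqrt{x} \leq \sqrt{x}+\sqrt{x-y} \leq 2\sqrt{x}$, and squaring and substituting yields exactly the claimed two-sided bound.

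The route is mildly different from the paper's. The paper first normalizes by setting $\delta = y/x \in (0,1]$, reducing the claim to $1 - \sqrt{1-\delta} \asymp \delta$, and then proves the two directions separately: the upper bound from $1-\delta \leq \sqrt{1-\delta}$, and the lower bound $1 - \sqrt{1-\delta} \geq \delta/2$ by squaring $1 - \delta/2 \geq \sqrt{1-\delta}$. Your conjugate-rationalization argument delivers both bounds in one stroke from a single two-sided estimate on the denominator, and it avoids the normalization step entirely; it ends up with the same constants ($1/4$ and $1$) as the paper's argument. Either way the content is the same elementary fact, so this is purely a matter of packaging; your version is arguably the cleaner of the two.
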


	For some $\gamma \in (0,0.25)$ and $\delta > 0$ to be decided later,
	let $p$ and $q$ be the following ternary distributions:
	\begin{align*}
		p =
		\left[
			\begin{matrix}
				0 \\ 1/2 \\ 1/2
			\end{matrix}
			\right], \quad \text{and} \quad q =
		\left[
			\begin{matrix}
				2 \gamma^{1 + \delta} \\ 1/2 + \gamma - \gamma^{1 +
					\delta} \\ 1/2 - \gamma - \gamma^{1 + \delta}
			\end{matrix}
			\right].
	\end{align*}
    Since $\gamma \leq 0.25$ and $\delta\geq 0$, these two are valid distributions.
    
    Observe that $\dtv(p,q) = \gamma + \gamma^{1+\delta} \asymp \gamma$ and 
    and $\hel^2(p,q) \asymp \gamma^{1+ \delta}$ by \Cref{cl:approx-sqRt}.
	We choose $\gamma$ and $\delta$ such that $ \nu = \dtv(p,q)$ and $ \rho = \hel^2(p,q)$.
    Such a choice of $\gamma$ and $\delta$ can be made by the argument given in \Cref{sec:valid-choice-worst-case} as long as $\nu \in (0,0.5)$ and $\rho \in [2 \nu^2, \nu]$.
  Thus, these two distributions satisfy the first two conditions of the theorem
	statement.

    In the rest of the proof, we will use the facts that $\gamma^{1 + \delta} \asymp \rho$ and $\gamma \asymp \nu$.
    In particular, we have $\gamma^2 \lesssim \gamma^{1+ \delta} \lesssim \gamma$.

	Since both $p$ and $q$ are supported on [3], we can restrict our attention to
	ternary output channels (see~\Cref{fact:id-channel}). In fact, we can go a step further and restrict our attention to only binary output channels. This is because, suppose we have a ternary output channel $\bT$ with corresponding output distributions being $(p', q'):= (\bT p, \bT q)$. There exists some $i \in [3]$ such that $\left(\sqrt{ p'(i)}-\sqrt{q'(i)}\right)^2 \ge \hel^2(p', q')/3$. Consider a channel $\bT'$ that maps $[3]$ to [2] deterministically, by sending $i$ to 1 and $[3] \setminus \{i\}$ to 2. Set $(p'', q'') := (\bT' p', \bT' q')$. We note two facts: (i)  $\hel^2(p'', q'') \asymp \hel^2(p', q')$, and (ii) $\bT' \bT$ is an $\epsilon$-private channel from [3] to [2] because of the composition property of private channels. In other words, for every $\epsilon$-private channel from [3] to [3], there exists an $\epsilon$-private channel from [3] to [2] that has the same output Hellinger distance (up to multiplicative constants). Thus, 
$$\max_{\bT \in \cP^\epsilon_{3,3}} \hel^2(\bT p, \bT q)\asymp \max_{\bT \in \cP^\epsilon_{3,2}} \hel^2(\bT p, \bT q).$$
	We will establish the following result: for all $\epsilon $ such that $e \leq
		e^\epsilon \lesssim
		\frac{1}{\hel^2(p,q)}$, we have
	\begin{align}
	\label{eq:desired-h2-ternary}
		\max_{\bT \in \cP^\epsilon_{3,2}} \hel^2(\bT p, \bT q) \asymp
		\max\left( \dtv^2(p,q), \hel^4(p,q) e^\epsilon \right) \asymp
		\max( \gamma^2, e^\epsilon\gamma^{2 + 2 \delta}).
	\end{align}
	By \Cref{fact:sample-complexity}, equation~\Cref{eq:desired-h2-ternary} implies that for
	$e \leq e^\epsilon \lesssim \frac{1}{\hel^2(p,q)}$, we have
	\begin{align}
	\label{eq:desired-samp-ternary}
		\nstar(p,q,\epsilon) \asymp
		\min\left( \frac{1}{\dtv^2(p,q)}, \frac{1}{e^\epsilon \cdot \hel^4(p,q)}\right).
	\end{align}
	Let $\epsilon_0$ be the right endpoint of the range for $\epsilon$ above,
	i.e., $e^{\epsilon_0} \asymp \frac{1}{\hel^2(p,q)}$.
	Then equation~\Cref{eq:desired-samp-ternary} shows that $\nstar(p,q, \epsilon_0) \asymp
		1/\hel^2(p,q) \asymp \nstar(p,q)$.
	Since for any $\epsilon$ such that $\epsilon > \epsilon_0$, we have
	$\nstar(p,q,\epsilon) \in \left[ \nstar(p,q,\epsilon_0), \nstar(p,q)\right]$,
	the desired conclusion in equation~\Cref{eq:worst-case-samp-comp} holds for $\epsilon > \epsilon_0$, as well.
	Thus, in the remainder of this proof, we will focus on establishing equation~\Cref{eq:desired-h2-ternary}.

	Since $\hel^2(\cdot,\cdot)$ is a convex, bounded function and the set of
	$\epsilon$-LDP channels is a convex polytope, it suffices to restrict our
	attention only to the extreme points of the polytope.
	As mentioned in \Cref{fact:extreme-pt-special-case}, these extreme points are
	of two types:
	\begin{enumerate}[wide, label = \textbf{Case \Roman*}.
		]
		\item (Trivial extreme point) Any such extreme point $\bT$ maps the entire domain to a single point with
		      probability $1$.
		      After transformation under this channel, all distributions become
		      indistinguishable, giving $\hel(\bT p, \bT q) = 0$.
		\item (Deterministic binary channel cascaded with the randomized response channel)
		      This corresponds to the case when $\bT = \RR^\epsilon
			      \times \bT'$, where $\bT'$ is a deterministic threshold
		      channel from $[3]$ to $[2]$.\new{\footnote{We use \Cref{thm:ext-point-priv-comm} to restrict our attention only to threshold channels.}}
		      There are two non-trivial options for choosing $\bT'$, which we analyze below.

		      The first choice of $\bT'$ maps $\{1\}$ and $\{2,3\}$ to different elements.
		      The transformed distributions $p'$ and $q'$ are $[0,1]$ and $[2 \gamma^{1 +
						      \delta}, 1 - 2 \gamma^{1 + \delta}]$, respectively.
		      Using \Cref{cl:approx-sqRt}, we obtain $\hel^2(p',q') \asymp \gamma^{1 +
				      \delta}$ and $ \dtv(p',q')
			      \asymp \gamma^{1+ \delta}$.
		      Let $p''$ and $q''$ be the corresponding distributions after applying the
		      randomized response with parameter $\epsilon$.
		      Since $p'$ and $q'$ are binary distributions, we can apply
		      \Cref{prop:SDPI-binary} to obtain
		      \begin{equation*}
		      \hel^2(p'',q'') \asymp \min(\hel^2(p',q'),
			      e^{\epsilon} \dtv^2(p',q')) \asymp \min (\gamma^{1+ \delta}, e^{\epsilon}
			      \gamma^{2+ 2\delta}) \asymp \gamma^{1+ \delta}\cdot \min (1 , e^{\epsilon}
			      \gamma^{1
				      + \delta}),
		      \end{equation*}
		      which is equal to $e^{\epsilon} \cdot \gamma^{2 + 2\delta} $ in
		      the
		      regime of interest and consistent with the desired expression in
		      equation~\Cref{eq:desired-h2-ternary}.

		      The second choice of $\bT'$ maps $\{1,2\}$ and $\{3\}$ to different elements.
		      The transformed distributions $p'$ and $q'$ are $[1/2,1/2]$ and $[ 1/2 + \gamma
					      + \gamma^{1 + \delta}, 1/2 - \gamma - \gamma^{1 + \delta}]$, respectively.
		      Applying \Cref{cl:approx-sqRt}, we observe that $\hel^2(p',q') \asymp \gamma^2$
		      and $ \dtv(p',q') \asymp \gamma$.
		      Let $p''$ and $q''$ be the corresponding distributions after applying the
		      randomized response with parameter $\epsilon$.
		      Applying \Cref{prop:SDPI-binary}, we
		      obtain
		      \begin{equation*}
		      \hel^2(p'',q'') \asymp \min(\hel^2(p',q'),
			      e^\epsilon \dtv^2(p',q')) \asymp \min (\gamma^2, e^\epsilon \gamma^{2}) \asymp
			      \gamma^2
		      \end{equation*}
		      in the regime of interest.
		      Again, this is consistent with equation~\Cref{eq:desired-h2-ternary}.

	\end{enumerate}

	Combining the cases above, the maximum Hellinger divergence after applying
	any $\epsilon$-LDP channel is $ \Theta(\gamma^2 \cdot \max(1 , e^\epsilon
			\gamma^{2\delta}))$, as desired.
\end{proof}

\subsection{General Distributions: Upper Bounds and Minimax Optimality}
\label{sec:gen-dist-samp-comp-upr-bds}
We now demonstrate an algorithm that finds a private channel matching the 
minimax rate in \Cref{lem:worst-case-samp-comp} up to logarithmic factors.
Moreover, the proposed algorithm is both computationally efficient and
communication efficient.

\LemUppBoundCommEff*
In comparison with \Cref{lem:worst-case-samp-comp}, we see that the test above is
minimax optimal up to logarithmic factors over the class of
distributions with fixed Hellinger divergence and total variation distance.
The channel $\bT$ satisfying this rate is of the following simple form: a deterministic binary channel $\bT'$, followed by the randomized response. 
In fact, we can take $\bT'$ to be either Scheffe's test (which preserves the total variation distance) or the binary channel from \Cref{fact:comm-constraints} (which preserves the Hellinger divergence), whichever of the two is better. 
We provide the complete proof in
\Cref{sec:prf-worst-case-samp-comp-lwr-bds}.

One obvious shortcoming of \Cref{thm:samp-comp-uprbds} is that even when
$\epsilon \to \infty$,
the test does not recover the optimal sample
complexity of $1/\hel^2(p,q)$, due to the logarithmic multiplier $\alpha$.
We now consider the case when $e^\epsilon \gtrsim \frac{1}{\hel^2(p,q)}$ and exhibit
a
channel that achieves the optimal sample complexity as soon as $e^\epsilon
	\gtrsim \frac{1}{\hel^2(p,q)} \log\left(\frac{1}{\hel^2(p,q)}\right)$.
Thus, privacy can be attained essentially for free in this regime.

\LemFreePrivacy*

We note that the size of the output domain of $\lceil \log(\nstar(p,q))\rceil$ is tight in the sense that any channel that achieves the sample complexity within constant factors of $\nstar(p,q)$ must use an output domain of size at least $\Omega(\log(\nstar(p,q)))$;
this follows by the tightness of \Cref{fact:comm-constraints} in the worst case.
Consequently, the channel $\bT$ achieving the rate above is roughly of the form (1) a communication-efficient channel from \Cref{fact:comm-constraints} that preserves the Hellinger divergence up to constant factors, followed by (2) an $\ell$-ary randomized response channel, for $\ell\geq 2$.

We give the proof of \Cref{lem:privacy-free} in \Cref{sec:proof_lem_privacy_free} and defer the proofs of some of the intermediate results to \Cref{app:rand-resp-low-priv}.
\subsubsection{Proof of \Cref{thm:samp-comp-uprbds}}
\label{sec:prf-worst-case-samp-comp-lwr-bds}
In this section, we provide the proof of \Cref{thm:samp-comp-uprbds}.
We first note that this result can be slightly
strengthened, replacing $\alpha$ by $\frac{\nstar_{ \text{binary}}}{\nstar}$,
where $\nstar_{ \text{binary}}$ is the sample complexity of hypothesis testing
under binary communication constraints.
This choice of $\alpha$ is smaller, by Pensia, Jog, and Loh~\cite[Corollary 3.4 and Theorem 4.1]{PenJL22}.

\begin{proof}
	The case of $\epsilon \leq 1$ follows from
	\Cref{fact:sample-complexity-eps-small}; thus, we focus on the setting where
	$\epsilon > 1$.

	We will establish these bounds via \Cref{prop:SDPI-binary}, by using a binary deterministic channel,
	followed by the binary
	randomized response channel. A sample complexity of $1/\dtv^2(p,q)$ is direct by using the channel for
	$\epsilon = 1$.
	Thus, our focus will be on the term $\frac{1}{
			e^\epsilon \hel^4(p,q)}$.

	Let $\bT' \in \cT_{2,k}$ be a deterministic binary output channel to be decided
	later.
	Consider the channel $\bT = \RR^\epsilon \times \bT'$.
	By \Cref{prop:SDPI-binary},
	we have
	\begin{align*}
	\hel^2\left(\RR^\epsilon \times \bT' p, \RR^\epsilon \times \bT' q  \right)
	&\asymp \min\left( e^\epsilon \dtv^2\left( \bT'p,\bT'q \right), \hel^2\left( \bT'p, \bT'q \right)  \right) \\
	&\geq \min\left( e^\epsilon \hel^4\left( \bT' p, \bT' q \right), \hel^2\left( \bT'p, \bT'q \right)  \right) \\
	&= \hel^2\left(\bT' p, \bT' q  \right)\cdot \min\left( e^\epsilon \hel^2\left( \bT' p, \bT'q \right), 1\right)
	\numberthis \label{eq:minmax-upper-bd},
	\end{align*}
	where the first inequality uses \Cref{fact:tv-hel}

	If we choose the channel $\bT'$ from \Cref{fact:comm-constraints}, we have
 $\hel^2(\bT 'p , \bT' q) \geq \frac{1}{\alpha} \cdot \hel^2\left(p, q
		\right)$.
	Applying this to inequality~\Cref{eq:minmax-upper-bd},
	we obtain
	\begin{align*}
	 \hel^2\left(\RR^\epsilon \times \bT' p, \RR^\epsilon \times \bT' q  \right) 
	 	&\gtrsim \frac{1}{\alpha}\cdot \hel^2\left(p, q  \right)\cdot \min\left( \frac{1}{\alpha}\cdot e^\epsilon \hel^2\left(  p, q \right), 1\right).
	 \end{align*}
	By \Cref{fact:id-channel}, the sample complexity of $\RR^\epsilon \times \bT'$, which is
	$\epsilon$-LDP, is at most
	$\frac{\alpha}{\hel^2(p,q)}\cdot \max\left( 1, \frac{\alpha}{ e^\epsilon
				\hel^2(p,q)} \right)$, which is equivalent to the desired statement.
    
Finally, the claim on the runtime is immediate, since the channel $\bT'$ from \Cref{fact:comm-constraints} can be found efficiently.
\end{proof}

\subsubsection{Proof of \Cref{lem:privacy-free}}
\label{sec:proof_lem_privacy_free}
We will prove a slightly generalized version of \Cref{lem:privacy-free} below that
works for a wider range of $\epsilon$:
\begin{proposition}
	\label{lem:k-ary-rand-resp-low-priv}
	Let $ {p}$ and $ q$ be two distributions on $[k]$ and $\epsilon > 1$.
	Then there exists an $\epsilon$-LDP channel $\bT$ from $[k]$ to $[\ell]$, for
	$\ell = \min(\lceil \log(1/\hel^2(p,q)) \rceil, k)$, such that
		\begin{align*}
		\hel^2(\bT  p, \bT  q) \gtrsim \hel^2(p,q)
		\cdot \min \left( 1, \frac{e^\epsilon \cdot \hel^2( p,  q)}{\log(1/\hel^2( p,  q))}\right) \cdot \min \left(1,
		\frac{e^\epsilon}{\log(1/\hel^2( p,  q))} \right).
	\end{align*}
Furthermore, the channel $\bT$ can be be computed in $\poly(k,\ell)$ time.
\end{proposition}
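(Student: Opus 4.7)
The plan is to construct $\bT$ as a two-stage composition: first apply the deterministic, communication-efficient channel $\bT'$ from \Cref{fact:comm-constraints} to go from $[k]$ to $[\ell]$ while preserving Hellinger divergence up to constants, i.e.\ $\hel^2(\bT' p, \bT' q) \asymp \hel^2(p,q)$; then post-compose with the $\ell$-ary randomized response $\RR^\epsilon$ from \Cref{def:rand-response}. Since $\bT'$ is deterministic, the ratio $\bT(y\mid x)/\bT(y\mid x')$ reduces to $\RR^\epsilon(y\mid \bT'(x))/\RR^\epsilon(y\mid \bT'(x'))$, which is at most $e^\epsilon$ by $\epsilon$-LDP of $\RR^\epsilon$; hence $\bT$ is $\epsilon$-LDP. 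The runtime claim follows immediately, since $\bT'$ is produced in $\poly(k)$ time by \Cref{fact:comm-constraints}, and $\RR^\epsilon$ is a fixed $\ell\times\ell$ matrix. When $k \leq \lceil \log(1/\hel^2(p,q))\rceil$ we simply take $\bT'$ to be the identity.

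The technical heart of the argument is an SDPI for $\RR^\epsilon$ in Hellinger divergence. Writing $\RR^\epsilon r = (1-\beta) r + \beta u$ with $u$ uniform on $[\ell]$ and $\beta = \ell/((\ell-1)+e^\epsilon)$, the identity $(\sqrt{a}-\sqrt{b})^2 = (a-b)^2/(\sqrt{a}+\sqrt{b})^2$ together with $(\sqrt{a}+\sqrt{b})^2 \asymp a+b$ reduces the coordinatewise contribution to
\begin{align*}
\Bigl(\sqrt{(\RR^\epsilon p')_i} - \sqrt{(\RR^\epsilon q')_i}\Bigr)^2 \asymp (1-\beta)\,\tau'_i\,\min\!\left(1,\,(p'_i + q'_i)\cdot \frac{\ell(1-\beta)}{\beta}\right),
\end{align*}
where $p' := \bT' p$, $q' := \bT' q$, and $\tau'_i := (p'_i - q'_i)^2/(p'_i + q'_i)$. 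For $\epsilon > 1$ one has $1-\beta \asymp \min(1, e^\epsilon/\ell)$ and $\ell(1-\beta)/\beta = e^\epsilon - 1 \asymp e^\epsilon$, so summing over $i$,
\begin{align*}
\hel^2(\bT p, \bT q) \asymp \min\!\left(1,\frac{e^\epsilon}{\ell}\right) \cdot \sum_{i=1}^\ell \tau'_i \cdot \min\!\left(1,\, e^\epsilon(p'_i + q'_i)\right).
\end{align*}
Since $\ell \asymp \log(1/\hel^2(p,q))$, this already isolates the second of the two min factors in the proposition.

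For the remaining inner sum I would split coordinates into \emph{large} ones (where $e^\epsilon(p'_i + q'_i) \geq 1$) and \emph{small} ones. Large coordinates contribute $\tau'_i$ directly, summing to some mass $L$; small coordinates contribute at least $e^\epsilon \tau_i'^2$, using $\tau'_i \leq p'_i + q'_i$. Because there are at most $\ell$ small coordinates, Cauchy--Schwarz gives $\sum_{\text{small}} \tau_i'^2 \geq S^2/\ell$, where $S$ is the $\tau'$-mass on small coordinates. A short case analysis on whether $L$ or $S$ carries at least half of $\hel^2(p', q') \asymp L+S$ shows the inner sum is $\gtrsim \hel^2(p', q') \cdot \min(1, e^\epsilon \hel^2(p', q')/\ell)$. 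Substituting $\hel^2(p', q') \asymp \hel^2(p, q)$ and $\ell \asymp \log(1/\hel^2(p,q))$ yields exactly the claimed bound.

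The main obstacle is the Cauchy--Schwarz step: per-coordinate losses of the form $e^\epsilon(p'_i + q'_i)$ can be arbitrarily small on individual coordinates, and only by aggregating across the at-most $\ell$ output symbols do we recover the global factor $e^\epsilon\hel^2(p,q)/\log(1/\hel^2(p,q))$. Crucially, the pre-processing by $\bT'$ to an alphabet of size $\ell \asymp \log(1/\hel^2(p,q))$ is what makes this aggregation quantitative; without it, no dimension-independent bound on the number of small coordinates would be available, and the $1/\ell$ gain inside the second min would be lost.
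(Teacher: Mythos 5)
Your proposal is correct, and while it shares the paper's high-level architecture (a deterministic Hellinger-preserving compression to an alphabet of size $\ell \asymp \log(1/\hel^2(p,q))$, followed by the $\ell$-ary randomized response), the analysis of the randomized-response step is genuinely different. The paper routes everything through ``comparable elements'' (those with likelihood ratio in $[1/2,2]$): \Cref{lem:reduction} gives a dichotomy — either a binary channel already preserves the Hellinger divergence (handled by \Cref{prop:SDPI-binary}), or the compression concentrates the divergence on comparable elements — and \Cref{lem:k-ar-rr-general} then shows the randomized response preserves that comparable-element contribution, via a further split of coordinates according to whether $q_i \gtrless e^{-\epsilon}$. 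Your symmetric identity $(\sqrt{a}-\sqrt{b})^2 \asymp (a-b)^2/(a+b)$ sidesteps the comparability assumption entirely (the paper's \Cref{cl:approx-sqRt} needs $y \le x$, which is why it leans on likelihood ratios in $[1/2,2]$), so you need only \Cref{fact:comm-constraints} and no dichotomy at the pre-processing stage; your large/small coordinate split plus Cauchy--Schwarz over the at-most-$\ell$ outputs plays the role of the paper's $\cE,\cE'$ decomposition. One further structural difference: the paper takes $\ell = \min(k', e^{\epsilon})$ so that $\RR^{\epsilon,\ell}$ loses only the factor $\min(1, e^{\epsilon}\tau/\ell)$, and the factor $\min(1, e^{\epsilon}/\log(1/\hel^2))$ is charged to the compression loss $\ell/k'$; you keep $\ell$ as in the statement and extract that same factor from $1-\beta \asymp \min(1, e^{\epsilon}/\ell)$. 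Both mechanisms yield the identical bound, and your route is somewhat more self-contained.
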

By \Cref{fact:sample-complexity}, \Cref{lem:k-ary-rand-resp-low-priv}
implies the following:
\begin{align*}
		\nstar(p,q,\epsilon) \lesssim	\nstar \cdot \max
		\left( 1, \frac{\nstar \log (\nstar)}{e^\epsilon}\right) \cdot \max \left(1,
		\frac{\log \nstar}{e^\epsilon}\right),
	\end{align*}
where $\nstar := \nstar(p,q)$.
Setting $e^\epsilon$ equal to $\nstar \log(\nstar)$ proves
\Cref{lem:privacy-free}.
Thus, we will focus on proving \cref{lem:k-ary-rand-resp-low-priv} in the rest
of this section.
We establish this result with the help of the following observations:
\begin{itemize}
	\item (\Cref{lem:k-ar-rr-general}) First, we show that the randomized response
	      preserves the contribution to the Hellinger divergence by ``comparable elements'' (elements
	      whose likelihood ratio is in the interval $\left[\frac{1}{2},2\right]$) when $\epsilon$ is large
	      compared to the support size.
	      In particular, we first define the following sets:
	      \begin{align}
	\label{eq:Adefinition}
	A &= \left\{ i \in [k]: \frac{p_i}{q_i} \in \left[\frac{1}{2},1\right) \right\} \,\, \text{and}\,\, A' = \left\{ i \in [k]: \frac{p_i}{q_i} \in [1,2] \right\}. 
\end{align}
Let $\RR^{\epsilon,\ell }$ denote the randomized response channel from $[\ell]$
	      to $[\ell]$ with privacy parameter $\epsilon$ (cf.\ \Cref{def:rand-response}). The following result is proved in
	      \Cref{app:effect_of_randomized_response_on_light_elements}:
	      	      \begin{restatable}[Randomized response preserves contribution of comparable elements]{lemma}{LemkRRGeneral}
		      \label{lem:k-ar-rr-general}
		      Let $p$ and $q$ be two distributions on $[\ell]$.
		      Suppose $\sum_{i \in
				      A \bigcup A'} (\sqrt{q_i} - \sqrt{p_i})^2 \geq \tau $.
		      Then $ \RR^{\epsilon,\ell }$,
		      for $\ell \leq e^{\epsilon}$,
		      satisfies
		      \begin{align*}
			\hel^2(
			\RR^{\epsilon,\ell } p, \RR^{\epsilon,\ell } q) \gtrsim\min\left(1, e^{\epsilon} \frac{\tau}{\ell }  \right) \cdot \tau\,.
		\end{align*}
		      Thus, when $e^{\epsilon} \gtrsim \frac{\ell}{\tau}$,
		      the randomized response preserves the original contribution of comparable elements.
	      \end{restatable}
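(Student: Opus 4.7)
The plan is to reduce the lemma to a per-coordinate analysis on $A \cup A'$, where comparability of $p_j$ and $q_j$ yields a clean asymptotic formula for the post-randomization Hellinger contribution, and then to aggregate via a case split based on whether $\max(p_j, q_j)$ exceeds $e^{-\epsilon}$.

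First I would unfold the definition in \Cref{def:rand-response} to get $(\RR^{\epsilon,\ell} p)_j = \frac{(e^\epsilon - 1) p_j + 1}{\ell - 1 + e^\epsilon}$ (and analogously for $q$). The hypothesis $\ell \le e^\epsilon$ forces $\ell - 1 + e^\epsilon \asymp e^\epsilon$, so $(\RR^{\epsilon,\ell} p)_j \asymp p_j + e^{-\epsilon}$, while the pairwise difference satisfies $(\RR^{\epsilon,\ell} p)_j - (\RR^{\epsilon,\ell} q)_j \asymp p_j - q_j$. Setting $m_j := \max(p_j, q_j)$ and using the identity $(\sqrt{x} - \sqrt{y})^2 = (x-y)^2/(\sqrt{x} + \sqrt{y})^2$ together with $p_j \asymp q_j \asymp m_j$ for $j \in A \cup A'$, I obtain the pointwise estimate
\begin{equation*}
\bigl(\sqrt{(\RR^{\epsilon,\ell} p)_j} - \sqrt{(\RR^{\epsilon,\ell} q)_j}\bigr)^2 \;\asymp\; \frac{(p_j - q_j)^2}{m_j + e^{-\epsilon}} \;\asymp\; h_j \cdot \min\bigl(1,\, e^\epsilon m_j\bigr),
\end{equation*}
where $h_j := (\sqrt{p_j} - \sqrt{q_j})^2 \asymp (p_j - q_j)^2/m_j$ denotes the original per-coordinate Hellinger mass (this last estimate is exactly \Cref{claim:Hellinger-binary-taylor} applied to the two-point mass at coordinate $j$).

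Next, discarding coordinates outside $A \cup A'$ (their contributions are non-negative), I would partition $A \cup A' = S_1 \sqcup S_2$ with $S_1 := \{j : m_j \ge e^{-\epsilon}\}$, and write $\tau_i := \sum_{j \in S_i} h_j$ so that $\tau_1 + \tau_2 \ge \tau$. On $S_1$ the factor $\min(1, e^\epsilon m_j)$ equals $1$, contributing $\gtrsim \tau_1$. On $S_2$ I would use the comparability bound $h_j \lesssim m_j$ (which holds since $p_j \asymp q_j$ forces $h_j = m_j(1 - \sqrt{\min(p_j,q_j)/m_j})^2 \le (1 - 1/\sqrt{2})^2 m_j$), so that $e^\epsilon m_j h_j \gtrsim e^\epsilon h_j^2$, and then invoke Cauchy--Schwarz with $|S_2| \le \ell$ to produce $\sum_{j \in S_2} h_j^2 \ge \tau_2^2/\ell$, yielding a total contribution from $S_2$ of $\gtrsim e^\epsilon \tau_2^2/\ell$.

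Finally, combining gives $\hel^2(\RR^{\epsilon,\ell} p, \RR^{\epsilon,\ell} q) \gtrsim \tau_1 + e^\epsilon \tau_2^2/\ell$, and a short case split closes the argument: if $\tau_1 \ge \tau/2$ then the bound is $\gtrsim \tau \ge \min(1, e^\epsilon \tau/\ell)\cdot \tau$; if instead $\tau_2 > \tau/2$ then the bound is $\gtrsim e^\epsilon \tau^2/\ell$, which equals $\min(1, e^\epsilon \tau/\ell) \cdot \tau$ in the linear regime $e^\epsilon \tau/\ell \le 1$ and exceeds $\tau$ (hence beats $\min(\cdot)\cdot \tau$) in the saturated regime $e^\epsilon \tau/\ell > 1$. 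The main obstacle I anticipate is Step 3: the Cauchy--Schwarz estimate has to exploit $|S_2| \le \ell$ sharply, and the comparability bound $h_j \lesssim m_j$ must be a truly universal constant independent of $\ell$ and $\epsilon$; it is precisely this coupling of $|S_2| \le \ell$ with the hypothesis $\ell \le e^\epsilon$ that produces the target form $\min(1, e^\epsilon \tau/\ell)\cdot \tau$.
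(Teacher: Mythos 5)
Your proposal is correct, and the per-coordinate reduction is the same as the paper's: both proofs use $(\RR^{\epsilon,\ell}p)_j \asymp p_j + e^{-\epsilon}$, $(\RR^{\epsilon,\ell}p)_j - (\RR^{\epsilon,\ell}q)_j \asymp p_j - q_j$, and the split of $A\cup A'$ at the mass level $e^{-\epsilon}$ into heavy elements (where the randomized response preserves the Hellinger contribution outright) and light elements. Where you genuinely diverge is in the treatment of the light elements. The paper introduces a second truncation level $r = 0.01\min(e^{-\epsilon}, \tau/\ell)$, discards the elements with $q_i \le r$ (losing at most $2\ell r \le \tau/5$ of the mass, which is why $r$ must be coupled to $\tau/\ell$), and then bounds the surviving light elements' contribution by the uniform estimate $e^\epsilon q_i \ge e^\epsilon r \gtrsim \min(1, e^\epsilon\tau/\ell)$. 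You instead keep all light elements and convert $\sum_j e^\epsilon m_j h_j \ge e^\epsilon \sum_j h_j^2$ (via $h_j \le m_j$, which holds for every coordinate, not just comparable ones) into $e^\epsilon \tau_2^2/\ell$ by Cauchy--Schwarz with $|S_2| \le \ell$. The two mechanisms deliver the identical bound $\min(1, e^\epsilon\tau/\ell)\cdot\tau$; your route avoids choosing a truncation parameter and the bookkeeping of the discarded mass, at the cost of an $\ell_1$--$\ell_2$ step, while the paper's route gives a uniform per-element lower bound that is slightly more hands-on. Your final case split (linear regime versus saturated regime) is also handled correctly, so there is no gap.
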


	\item (\Cref{lem:reduction}) We then show in \Cref{lem:reduction}, proved in \Cref{app:proof_of_cref_lem_reduction}, that either we can reduce the problem to the previous special case (small support size and main contribution to Hellinger divergence is from comparable elements) or to the case when the distributions are binary (where \Cref{prop:SDPI-binary} is applicable and is, in a sense, the \emph{easy case} for privacy).
	      \begin{restatable}[Reduction to base case]{lemma}{PropReduction}
		      \label{lem:reduction}
		      Let $p$ and $q$ be two
		      distributions on $[k]$.
		      Then there is a channel $\bT$, which can be computed in  time polynomial in $k$,  that maps $[k]$ to $[\ell]$ (for $\ell $ to be
		      decided below) such that for $p' = \bT
			      p$ and $q' = \bT q$, at least one of the following holds:
		      \begin{enumerate}
			      \item For any $\ell > 2$ and $\ell \leq
				            \min \left( k, 1 + \log\left(1/\hel^2(p,q)\right)\right)$, we have
			            \begin{align*}
		\sum_{i \in B \bigcup B'} \left(\sqrt{q_i'} - \sqrt{p_i'}\right)^2 \gtrsim \hel^2(p,q) \cdot \frac{\ell }{\min\left( k, 1 + \log\left(1/\hel^2(p,q)\right)\right)},
		\end{align*}
			            where $B$ and $B'$ are defined analogously to $A$ and $A'$ in
			            equation~\Cref{eq:Adefinition}, but with respect to
			            distributions $p'$ and $q'$.

			      \item $\ell  = 2$ and $\hel^2(p',q') \gtrsim \hel^2(p,q)$.
		      \end{enumerate}

       \end{restatable}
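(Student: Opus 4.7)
The approach is to decompose $\hel^2(p,q)$ according to where the pointwise likelihood ratios $r_i := p_i/q_i$ fall. Letting $L := \min(k, 1 + \log(1/\hel^2(p,q)))$, define
\begin{align*}
C := \sum_{i :\, r_i \in [1/2,\, 2]} \left(\sqrt{p_i} - \sqrt{q_i}\right)^2, \quad P^+ := \sum_{i :\, r_i > 2} p_i, \quad Q^- := \sum_{i :\, r_i < 1/2} q_i.
\end{align*}
Using $(\sqrt{p_i} - \sqrt{q_i})^2 \asymp p_i$ when $r_i \gg 1$ and $\asymp q_i$ when $r_i \ll 1$, one obtains $\hel^2(p,q) \asymp C + P^+ + Q^-$, so at least one of these three nonnegative quantities is $\gtrsim \hel^2(p,q)/3$. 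The plan is to construct $\bT$ via a case split on which term dominates, yielding either case 1 or case 2 of the lemma.

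When the comparable mass dominates ($C \gtrsim \hel^2(p,q)$), I would quantile-partition the comparable range $[1/2,2]$ by $\log_2 r_i$ into $\ell-1$ sub-buckets of ratio width factor $4^{1/(\ell-1)} = O(1)$, merge each sub-bucket to a single output, and route the non-comparable elements into the $\ell$-th output. Merging inside a sub-bucket preserves its Hellinger contribution up to a constant factor (since pointwise ratios differ by $O(1)$), and the $\ell-1$ merged outputs have ratios in $[1/2,2]$ and hence lie in $B \cup B'$; their combined contribution is $\gtrsim C \gtrsim \hel^2(p,q) \geq \hel^2(p,q)\cdot \ell/L$, establishing case 1. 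When the heavy sides are both substantial ($\min(P^+, Q^-) \gtrsim \hel^2(p,q)$), I would greedily partition the heavy-$p$ elements into $\ell$ chunks of $p$-mass $\asymp \min(P^+, Q^-)/\ell$ and the heavy-$q$ elements into $\ell$ chunks of $q$-mass $\asymp \min(P^+, Q^-)/(2\ell)$, then pair the $i$-th chunks from the two sides into a single output. The deliberate $2$-to-$1$ mismatch produces combined ratios $\asymp 2$, which lie in $B \cup B'$; the defining heavy-side inequalities $p_i > 2q_i$ and $q_i > 2p_i$ bound the cross-contributions by a constant factor of the intended masses, so each paired output contributes $\asymp \min(P^+, Q^-)/\ell$ to the Hellinger and the total is $\gtrsim \hel^2(p,q)$, again giving case 1. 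In the remaining imbalanced-heavy case (WLOG $P^+ \gtrsim \hel^2(p,q)$ and $Q^-, C \ll \hel^2(p,q)$), invoking \Cref{fact:comm-constraints} with $\ell = 2$ yields a binary channel with $\hel^2(\bT p, \bT q) \gtrsim \hel^2(p,q)$, giving case 2.

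The main obstacle will be the pairing step in the balanced-heavy subcase: the combined output ratio must be bounded away from $1$ (else the Hellinger collapses) and at most $2$ (to stay inside $B \cup B'$), all while keeping the construction polynomial time. Fortunately, the heavy-side inequalities automatically bound the cross-contributions by constants, and greedy partitioning by $r_i$ runs in $O(k \log k)$ time. Edge cases where a single element carries a large share of $P^+$ or $Q^-$ may reduce the effective number of chunks below $\ell$, but each chunk still contributes $\Omega(\hel^2(p,q)/\ell)$, so the total comparable contribution remains $\gtrsim \hel^2(p,q) \cdot \ell/L$, which suffices.
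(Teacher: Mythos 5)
Your high-level decomposition (comparable mass versus heavy-$p$ versus heavy-$q$) matches the paper's two-way split between ``comparable elements dominate'' and ``non-comparable elements dominate,'' but both of your constructions are unsound in a way the paper avoids by invoking the relevant compression results from Pensia, Jog, and Loh~\cite{PenJL22} (Theorem~2, Case~1 and Theorem~3.2, Case~2) as black boxes.

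In your comparable-dominant case, the partition of $[1/2,2]$ into sub-buckets of multiplicative width $4^{1/(\ell-1)}$ in $r_i$ does not control the Hellinger loss under merging. The Hellinger contribution of an element scales as $q_i(\sqrt{r_i}-1)^2$, and within a single sub-bucket the quantity $|r_i-1|$ can vary by an arbitrarily large factor (e.g., a bucket straddling $r=1$ mixes positive and negative $r_i-1$, allowing near-total cancellation; even a bucket just above $1$ such as $[1.0001,\,1.0001\cdot 4^{1/(\ell-1)}]$ contains $r_i-1$ spanning several orders of magnitude). The correct construction must partition by the scale of $|\sqrt{r_i}-1|$, separate the sub- and super-unit sides, and even then the preservation factor is genuinely $\ell/\min(k,\log(1/\hel^2))$ rather than the constant you claim; your overclaim happens to imply the lemma's weaker conclusion, but the stated justification is false. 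In your ``imbalanced heavy'' case, \Cref{fact:comm-constraints} with $\ell=2$ does \emph{not} give $\hel^2(\bT p,\bT q)\gtrsim\hel^2(p,q)$ --- the correct form of that bound yields only $\hel^2(\bT p,\bT q)\gtrsim\hel^2(p,q)/\log(1/\hel^2(p,q))$ at $\ell=2$ (the factor $\ell/\min(k,\log(1/\hel^2))$ as printed is a typo; the paper's own later use confirms the denominator should be $\ell$). The case is nevertheless salvageable: when $P^+\gtrsim\hel^2(p,q)$, the threshold channel sending $\{i:r_i>2\}$ to one output and the rest to the other already gives $(\sqrt{p_1'}-\sqrt{q_1'})^2\geq(1-1/\sqrt 2)^2\,P^+\gtrsim\hel^2(p,q)$ by a direct calculation, with no reference to \Cref{fact:comm-constraints}. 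This also makes your balanced-heavy pairing case superfluous (a simple threshold binary channel suffices whenever \emph{either} of $P^+,Q^-$ is $\gtrsim\hel^2(p,q)$); as written that pairing is also underspecified, since the ratio of a paired output drifts toward $1$ when $P^+\asymp Q^-$, undermining the claim that it lands in $B\cup B'$.
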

\end{itemize}

We now provide the proof of \Cref{lem:k-ary-rand-resp-low-priv}, with the
help of \Cref{lem:k-ar-rr-general,lem:reduction}.

\begin{proof}
	(Proof of \Cref{lem:k-ary-rand-resp-low-priv})
	The channel $\bT$
	will be of the form $\bT = \RR^{\epsilon,\ell } \times \bT_1$, where
	$\bT_1$ is a channel from $[k] $ to $[\ell]$ and $\ell $ is to be decided.
	The privacy of $\bT$ is clear from the construction.

	We begin by applying \Cref{lem:reduction}.
	Let $\bT_1$ be the channel from \Cref{lem:reduction} that maps from $[k]$ to
	$[\ell]$.
	Let $p' = \bT_1 p$ and $q' = \bT_1 q$, and define $\tilde{p} = \RR^{\epsilon, \ell } p'$ and $\tilde{q} =
		\RR^{\epsilon, \ell } q'$.
The claim on runtime thus follows from \Cref{lem:reduction}.

	Suppose for now that $\bT_1$ from \Cref{lem:reduction} is a binary channel.
	Then we know that $\hel^2(p',q') \gtrsim \hel^2(p,q)$ and $\dtv(p',q') \gtrsim
		\hel^2(p',q')$, where the latter holds by \Cref{fact:tv-hel}.
	Applying \Cref{prop:SDPI-binary},
	we have
	\begin{align*}
	\hel^2(\tilde{p}, \tilde{q}) &\gtrsim \min\left( \hel^2(p',q'), e^ \epsilon \dtv^2(p',q' )\right)\\
			& \gtrsim \min\left( \hel^2(p',q'), e^ \epsilon \hel^4(p',q' )\right) \\			& \gtrsim \hel^2(p,q) \min\left( 1 , e^ \epsilon \hel^2(p,q )\right),
	\end{align*}
	which concludes the proof in this case.

	We now consider the case when $\ell > 2$ in the guarantee of
	\Cref{lem:reduction}.
	Then the comparable elements of $p'$ and
	$q'$ preserve a significant fraction of the Hellinger divergence (depending on the
	chosen value of $\ell$) between $p$ and $q$.
	Let $k' = \min \left( k, 1 + \log(1/\hel^2(p,q)) \right)$, and choose $\ell$ to
	be $\min(k', e^{\epsilon})$.
	Then \Cref{lem:reduction} implies that the contribution to the Hellinger divergence
	from
	comparable elements of $p'$ and $q'$ is at least $\tau$, for $\tau \asymp
		\hel^2(p,q) \frac{\ell}{k'}$.
	We will now apply \Cref{lem:k-ar-rr-general} to $p'$
	and $q'$ with the above choice of $\tau$.
	Since $\ell \leq e^\epsilon$ by construction, applying
	\Cref{lem:k-ar-rr-general} to $p'$ and $q'$, we obtain
	\begin{align*}
	\hel^2( \tilde{p}, \tilde{q}) 
	& \gtrsim	\tau \cdot \min \left( 1, \frac{e^\epsilon \tau}{\ell } \right)\\
	& \gtrsim \hel^2(p, q) \frac{\ell}{k'} \cdot \min \left( 1,   \frac{e^\epsilon
		\cdot \hel^2(  p, q)}{\log(1/\hel^2( p, q))}\right) \\ 
	& \gtrsim \hel^2(p, q) \cdot \min \left( 1, \frac{e^\epsilon}{\log(1/\hel^2(p,q))} \right) \cdot \min \left( 1,   \frac{e^\epsilon
		\cdot \hel^2(  p, q)}{\log(1/\hel^2( p, q))}\right)\,, 
	\end{align*}
	where the last step uses the facts that $\ell = \min(e^\epsilon, k')$ and $k' \gtrsim
		\log(1/\hel^2(p,q))$.
	This completes the proof.
\end{proof}

\section{Extreme Points of Joint Range Under Communication
  Constraints}
\label{sec:ext-points-comm}

In this section, our goal is to understand the extreme points of the set $\cA
	:= \{(\bT p, \bT q): \bT \in \cT_{\ell,k}\}$.
This will allow us to identify the structure of
optimizers of quasi-convex functions over $\cA$.
The main result of this section is the following:
\ThmJointRangeComm*

We provide the proof of \Cref{thm:ext-point-comm} in
\Cref{sec:proof-ext-point-comm}.
Before proving \Cref{thm:ext-point-comm},
we discuss some consequences for optimizing quasi-convex functions over $\cA$.
The following result proves \Cref{cor:alg-priv-quasi-cvx} for $\cC = \cT_{\ell,k}$:

\begin{corollary}[Threshold channels maximize quasi-convex functions]
	\label{cor:ext-point-comm-quasicvx}
	Let $p$ and $q$ be two distributions on $[k]$.
	Let $\cA := \{(\bT p, \bT q): \bT \in \cT_{\ell,k}\}$.
	Let $g$ be a real-valued quasi-convex function over $\cA$.
	Then
	\begin{align*}
		\max_{\bT \in \cT_{\ell,k}} g(\bT p, \bT q) = \max_{\bT \in
			\cTT_{\ell,k}} g(\bT p, \bT q).
	\end{align*}
	Moreover, the above optimization problem can be solved in time $
		\poly(k^\ell)$.\footnote{Recall that $g$ is assumed to be permutation invariant. If not, an extra factor of $\ell!$ will appear in the time complexity.}
\end{corollary}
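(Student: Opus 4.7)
The plan is to derive this corollary as an essentially immediate consequence of \Cref{thm:ext-point-comm}, combined with the standard fact that a quasi-convex function on a convex compact set attains its maximum at an extreme point, and then to handle the algorithmic claim by enumerating the (small) set of threshold channels.

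First I would verify that $\cA = \{(\bT p, \bT q) : \bT \in \cT_{\ell,k}\}$ is a convex and compact subset of $\Delta_\ell \times \Delta_\ell$. Convexity and compactness follow from \Cref{fact:ext-point-linear-transform}, since $\cA$ is the image of the convex compact polytope $\cT_{\ell,k}$ under the linear map $\bT \mapsto (\bT p, \bT q)$. Since $g$ is quasi-convex on $\cA$, its sublevel sets are convex, and therefore the supremum of $g$ on $\cA$ is attained at an extreme point of $\cA$; this is a textbook fact (every point in a compact convex set is a convex combination of extreme points by the Krein--Milman theorem, and on any such combination the quasi-convex $g$ is bounded above by its value at one of the extreme points). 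Thus there exists $(\bT p, \bT q) \in \ext(\cA)$ achieving the maximum. By \Cref{thm:ext-point-comm}, any such extreme point is realized by a threshold channel $\bT \in \cTT_{\ell,k}$, which gives
\begin{equation*}
\max_{\bT \in \cT_{\ell,k}} g(\bT p, \bT q) \;=\; \max_{\bT \in \cTT_{\ell,k}} g(\bT p, \bT q).
\end{equation*}

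For the algorithmic claim, I would appeal to the counting observation following \Cref{def:thresh-channel}: once the elements of $[k]$ are sorted by their likelihood ratio $p(x)/q(x)$ (which takes $O(k \log k)$ time), a threshold channel is uniquely determined (up to relabeling the $\ell$ outputs) by a partition of the sorted list into at most $\ell$ contiguous blocks. The number of such partitions is at most $\binom{k+\ell-1}{\ell-1} = O(k^\ell)$. The algorithm simply enumerates all these partitions, computes $\bT p$ and $\bT q$ for each (which takes $O(k)$ time), evaluates $g$, and returns the maximizer. Since $g$ is assumed permutation invariant, we need not enumerate the $\ell!$ relabelings of outputs; without permutation invariance, the same scheme works with an extra factor of $\ell!$, as noted in the footnote.

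The only subtlety I anticipate is the tie-breaking when several inputs share the same likelihood ratio: in that case the definition of threshold channel allows grouping them together in any way, but this causes no difficulty for the enumeration, since we can simply assume (without loss of generality by perturbation) that all ratios are distinct, or equivalently, treat tied elements as a single ``atom'' before partitioning. Neither issue affects either the correctness argument above nor the $\poly(k^\ell)$ bound on the runtime, so the corollary follows directly.
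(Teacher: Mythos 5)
Your proof is correct and takes essentially the same route as the paper: establish that $\cA$ is a convex compact polytope, deduce that the quasi-convex $g$ attains its maximum at an extreme point, invoke \Cref{thm:ext-point-comm} to conclude the extreme points are realized by threshold channels, and enumerate the $O(k^\ell)$ threshold channels. The cosmetic differences (you invoke Krein--Milman where the paper directly uses that a polytope is the convex hull of its finitely many extreme points, and you give a stars-and-bars count instead of the paper's $k^\ell$ bound) do not change the argument.
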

\begin{proof}
	Observe that $\cA$ is a closed polytope.
	Let $\cX$ be the set of extreme points of $\cA$.
	Observe that $\cX \subseteq \{(\bT p, \bT q): \bT \in \cT_{\ell,k} \text{ and }
		\bT
		\text{ is deterministic}\}$, and thus is finite.
	Since $\cA$ is a closed polytope, $\cA$ is convex hull of $\cX$.
	Furthermore, the maximum of $g$ on $\cX$ is well-defined and finite, as $\cX$ is
	a
	finite set.
	Any $y \in \cA$ can be expressed as a convex combination $y = \sum_{x_i \in
			\cX} \lambda_i x_i$. Recall that an equivalent definition of quasi-convexity is that $g$ satisfies $g(\lambda x + (1-\lambda)y) \le \max (g(x), g(y))$ for all $\lambda \in [0,1]$. By repeatedly using this fact, we have
	$$g(\lambda) = g\left(\sum_{x_i \in \cX} \lambda_i x_i\right) \le \max_{x \in
			\cX} g(x).
	$$

	By \Cref{thm:ext-point-comm}, any extreme point $x \in \cX$ is obtained
	by passing $p$ and $q$ through a threshold channel.
	Thus, the maximum of $g$ over $\cX$ is attained by passing $p$ and $q$
	through a threshold channel.
	The claimed runtime is obtained by trying all possible threshold channels.
\end{proof}
\begin{remark}
	(Quasi-)convex functions of interest include all $f$-divergences,
	R\'{e}nyi divergences, Chernoff information, and $L_p$ norms.
	We note that the above result also holds for post-processing: For any fixed
	channel $\bH \in \cT_{\ell',\ell}$, we have
	\begin{align*}
		\max_{\bT \in \cT_{\ell,k}} g( \bH \bT p, \bH \bT
		q) = \max_{\bT \in \cTT_{\ell,k}} g(\bH \bT p, \bH \bT q).
	\end{align*}
	This is because $g(\bH p', \bH q')$ is a quasi-convex function of $(p', q')
		\in \cA$.
\end{remark}

\begin{remark}
If $g$ is the Hellinger divergence and $\cC = \cT_{\ell, k}$, we can conclude the following result for the communication-constrained setting: There exists a $\bT \in \cTT_{\ell, k}$ that attains the instance-optimal sample complexity (up to universal constants) for hypothesis testing under a communication constraint of size $\ell$. This result is implied in Pensia, Jog, and Loh~\cite{PenJL22} by Theorem 2.9 (which is a result from Tsitsiklis~\cite{Tsitsiklis93}) and Lemma 4.2. The above argument provides a more straightforward proof. 
\end{remark}

\subsection{Proof of
	\Cref{thm:ext-point-comm}}
\label{sec:proof-ext-point-comm}
We now provide the proof of \Cref{thm:ext-point-comm}.
\begin{proof}
	(Proof of
	\Cref{thm:ext-point-comm})
	We first make the following simplifying assumption about the likelihood ratios: there is at most a single element $i^*$ with $q_{i^*} = 0$,
	and for all other elements $i \in [k]\setminus \{i^*\}$, $p_i/q_i$ is a unique value.
	If there are two or more elements with the same likelihood ratio, we can merge
	those elements into a single alphabet without loss of generality, as we explain
	next.
	Let $p'$ and $q'$ be the distributions after merging these elements, and let $k'
		\le k$ be the new cardinality.
	Then for any channel $\bT \in \cT_{\ell,k}$, there exists another channel $\bT'
		\in \cT_{\ell,k'}$ such that $(\bT p , \bT q) = (\bT'p', \bT'q')$.
	We can then apply the following arguments to $ p'$ and $q'$.
	See \Cref{sec:regu-cond-joint-range} for more details.

	In the following, we will consider $p_i/q_i$ to be $\infty$ if $q_i = 0$, and
	we introduce the notation $\theta_i := p_i/q_i$.
	We will further assume, without loss of generality, that
	$p_i/q_i$ is strictly increasing
	in $i$.
	Since the elements are ordered with respect to the likelihood ratio, a threshold
	channel corresponds to a map that partitions the set $[k]$ into contiguous
	blocks.
	Formally, we have the following definition:
	\begin{definition}[Partitions and
			threshold partitions] We say that $\cS = (S_1,S_2,\dots,S_{\ell})$ forms an
		$\ell$-\emph{partition}
		of $[k]$ if $ \cup_{i=1}^\ell S_i = [k]$ and $S_i \cap S_j = \emptyset$ for $1
			\leq i \neq j \leq \ell$.
		We say that $\cS$ forms an $\ell$-\emph{threshold partition} of $[k]$ if in addition, for all $i < j \in \ell$, every entry of $S_i$ is less than
		every entry of $S_j$.
	\end{definition}
	As mentioned before, channels corresponding to $\ell$-threshold partitions are
	precisely the threshold channels up to a permutation of output labels.
	The channels corresponding to $\ell$-partitions are the set of all
	deterministic channels that map $[k]$ to $\ell$, which are the extreme points of $\cT_{\ell,k}$ (cf.\
	\Cref{fact:ext-point-channels}).

	Observe that $\cA$ is a convex, compact set, which is a linear transformation of
	the convex, compact set $ \cT_{\ell,k}$, and any extreme point of $\cA$ is of the form $(\bT p, \bT q)$, where $\bT$
	is an extreme point of $\cT_{\ell, k}$ (cf. \Cref{fact:ext-point-linear-transform}).
	Now suppose	$(\bT p, \bT q)$ is an extreme point of $\cA$, but $\bT$ is not
	a threshold channel.
	Thus, $\bT$ corresponds to  some $\ell$-partition $\cS$
	of
	$[k]$ that is not an $\ell$-threshold partition.
	We will now show that $(\bT p, \bT q)$ is not an extreme point of $\cA$, by
	showing that there exist two distinct channels $\bT_1 \in \cT_{\ell, k}$ and
	$\bT_2 \in \cT_{\ell, k}$
	such that the following holds:
	\begin{align}
		\label{eq:extreme-point}
		\frac{1}{2} \cdot \bT_1 p + \frac{1}{2} \cdot \bT_2 p = \bT p, \quad \text{and} \quad
		\frac{1}{2} \cdot \bT_1 q + \frac{1}{2} \cdot \bT_2 q = \bT q,
	\end{align}
	and $\bT_1 p \neq \bT p$.

	Since $\cS$ is not a $\ell$-threshold partition, there exist $1 \leq a < b < c
		\leq
		k$ and $m \neq n$ in $[\ell]$ such that $a,c \in S_m$ and $b \in S_n$, and
	$p_a/q_a
		< p_b/q_b < p_c/q_c$.
	Among $q_a, q_b$, and $q_c$, only $q_c$ is potentially zero.
	Suppose for now that $q_c \neq 0$; we will consider the alternative case shortly.

	For some $\epsilon_1 \in (0,1)$ and $\epsilon_2 \in (0,1)$ to be determined
	later, let $\bT_1$ be the following channel:
	\begin{enumerate}
		\item For $x
			      \not \in \{a, b\}$, $\bT_1$ maps $x$ to $\bT(x)$.
		\item For $x = a$ (respectively $b$), $\bT_1 $ maps $x$
		      to $m$
		      (respectively $n$) with probability $1 - \epsilon_1$
		      (respectively $1 -
			      \epsilon_2$) and to $n$ (respectively $m$) with
		      probability $\epsilon_1$
		      (respectively $\epsilon_2$).
	\end{enumerate}
	Thus, the channels $\bT$ and $\bT_1$ have all columns identical, except for
	those corresponding to inputs $a$ and $b$.
	Let $v_i$ be the $i^{\text{th}}$ column of $\bT$.
	Observe that $v_a$ is a degenerate distribution at $m \in [\ell]$ and $v_b$ is a
	degenerate distribution at $n \in [\ell]$ (equivalently, $\bT(m, a)= 1$ and
	$\bT(n, b)=1$).
	Thus, we can write
	\begin{align*}
		\bT_1 q & = \bT q + (\epsilon_2 q_b -
		\epsilon_1 q_a)  (v_a - v_b), \\ \bT_1 p & = \bT p + (\epsilon_2
		p_b - \epsilon_1 p_a)  (v_a - v_b).
	\end{align*}
	If we choose $\epsilon_1 q_a = \epsilon_2q_b$, we have $\bT_1 q = \bT
		q$ and
	\begin{align*}
		\bT_1 p & =
		\bT p + (\epsilon_2 p_b - \epsilon_1 p_a)  (v_a - v_b) \\ & = \bT
		p + (\epsilon_2 q_b \theta_b - \epsilon_1 q_a \theta_a)  (v_a -
		v_b) \\ & = \bT p + \epsilon_1 q_a (\theta_b - \theta_a)  (v_a - v_b).
		\numberthis
		\label{eq:Tpdecomposition}
	\end{align*}
	Recall that $\theta_b > \theta_a$, as mentioned above.

	We now define $\bT_2$. For some $\epsilon_3 \in (0,1)$ and $\epsilon_4
		\in (0,1)$ to be decided later, we have:
	\begin{enumerate}
		\item For $x \not \in \{b,
			      c\}$, $\bT_2$ maps $x$ to $\bT(x)$.
		\item For $x = c$ (respectively $b$), $\bT_2 $ maps $x$
		      to $m$
		      (respectively $n$) with probability $1 - \epsilon_3$
		      (respectively $1 -
			      \epsilon_4$) and to $n$ (respectively $m$) with
		      probability $\epsilon_3$
		      (respectively $\epsilon_4$).
	\end{enumerate}
	With the same arguments as before, we have
	\begin{align*}
		\bT_2 q & = \bT q +
		(\epsilon_4 q_b - \epsilon_3 q_c)  (v_c - v_b), \\ \bT_1 p & = \bT p +
		(\epsilon_4 p_b - \epsilon_3 p_c)  (v_c - v_b).
	\end{align*}
	If we choose $\epsilon_3 q_c = \epsilon_4q_b$, we have $\bT_2 q = \bT
		q$ and
	\begin{align*}
		\bT_2 p & = \bT p + (\epsilon_4 q_b \theta_b -
		\epsilon_3 q_c \theta_c)  (v_c - v_b) \\ & = \bT p + \epsilon_3 q_c
		(\theta_b - \theta_c)  (v_c - v_b) \\ & = \bT p + \epsilon_3 q_c (\theta_b
		- \theta_c)  (v_a - v_b), \numberthis \label{eq:Tpdecomposition2}
	\end{align*}
	where the last line follows by the fact that $v_a = v_c$, since
	$\bT$ maps both $a$ and $c$ to $m$ almost surely.

	Let $\epsilon_1 \in(0,1)$ and $\epsilon_3 \in (0,1)$ be such that $\epsilon _1
		q_a (\theta_b - \theta_a) = - \epsilon_3 q_c (\theta_b - \theta_c)$.
	Such a choice always exists because $\theta_b - \theta_a $ and $-(\theta_b -
		\theta_c)$ are both strictly positive and finite.
	Then equations~\Cref{eq:Tpdecomposition} and~\Cref{eq:Tpdecomposition2} imply that $\bT p =
		\frac{1}{2} \bT_1 p + \frac{1}{2} \bT_2 p$ and $\bT q = \frac{1}{2} \bT_1 q +
		\frac{1}{2} \bT_2 q$, and $\bT_1 p \neq \bT p$.
	Moreover, $\bT_1 p \neq \bT p$.
	Thus, $(\bT p , \bT q)$ is not an extreme point of $\cA$.

	We now outline how to modify the construction above when $q_c$ is zero.
	By setting $\epsilon_4 $ to be zero, we obtain $\bT_2 q = \bT q$ and
	$\bT_2 p = \bT p + (-\epsilon_3 p_c) \left( v_a - v_b \right)$.
	The desired conclusion follows by choosing $\epsilon_1$ and $\epsilon_3$ small
	enough such that
	$\epsilon_1 q_a(\theta_b - \theta_a) = - \epsilon_3 p_c$.

\end{proof}

\section{Extreme Points of Joint Range under Privacy Constraints} %
\label{sec:ext-points-priv-comm}

In the previous section, we considered the extreme points of the joint range
under communication constraints.
Such communication constraints are routinely applied in practice in the presence of
additional constraints such as local differential privacy.
However, the results of the previous section do not apply directly, as the joint
range is now a strict subset of the set in \Cref{thm:ext-point-comm}, and the
extreme points differ significantly.
For example, the threshold channels are not even private.
However, we show in this section that threshold channels still play a
fundamental role.
Our main result in this section is the following theorem:

\ThmOptStructurePriv*

Actually, our result applies to a broader family of linear programming (LP) channels
that we describe below:

\begin{restatable}[LP family of channels]{definition}{DefLPFamily}
	\label{def:family-of-channels}
	For any $\ell \in \N$,
	let $\nu = (\nu_1, \nu_2, \dots, \nu_\ell)$ and $\gamma = (\gamma_1, \gamma_2,
		\dots, \gamma_\ell)$ be two nonnegative vectors in $\R^{\ell}_+$.
	For $k \in \N$, define the set of linear programming (LP) channels $\cJ_{\ell,
			k}^{\gamma, \nu} $, a subset of $\cT_{\ell,k}$, to be the (convex) set
	of all channels from $[k]$ to $[\ell]$ that satisfy the following constraints:
	\begin{align}
\label{eq:lp-convex-set}
\text{For each row $j\in[\ell]$, and for each $i, i ' \in [k]$, we have } \bT(j,i) \leq \gamma_{j} \bT(j,i') + \nu_{j}.	
	\end{align}
\end{restatable}
When $\gamma_j = e^\epsilon$ and $\nu_j = 0$ for all $j \in [\ell]$, we recover
the set of $\epsilon$-LDP channels from $[k]$ to $[\ell]$.
Another example will be mentioned in \Cref{sec:extensions_to_other_privacy}
for a relaxed version of approximate LDP.

The rest of this section is organized as follows:
	In \Cref{sec:bound_on_number_of_unique_columns}, we show that any $\bT$ that leads to an extreme point of $\cA$ cannot have more than $2 \ell^2$ unique columns (\Cref{thm:ext-point-priv-uniq-col-poly}).
	We use this result to prove \Cref{thm:ext-point-priv-comm} in
	\Cref{sec:unique_columns_to_threshold_channels}.
	In \Cref{SecLDPCommTesting}, we apply \Cref{thm:ext-point-priv-comm} to prove
	\Cref{cor:alg-priv-quasi-cvx,cor:alg-priv-sbht}.

\subsection{Bound on the Number of Unique Columns}
\label{sec:bound_on_number_of_unique_columns}
The following result will be critical in the proof of
\Cref{thm:ext-point-priv-comm}, the main result of this section.
\begin{lemma}
	\label{thm:ext-point-priv-uniq-col-poly}
	Let $p$ and $q$ be distributions on $[k]$.
	Let $\cC$ be the set of channels from $[k]$ to $[\ell]$, from
	\Cref{def:family-of-channels}.
	Let $\cA$ be the set of all pairs of distributions that are obtained by
	applying a channel from $\cC$ to $p$ and $q$, i.e.,
	\begin{align}
		\cA = \{(\bT
		p, \bT q) \mid \bT \in \cC\}.
	\end{align}
	If $\bT$ has more than $2\ell^2$ unique columns, then $(\bT p, \bT q)$ is
	not an extreme point of $\cA$.
\end{lemma}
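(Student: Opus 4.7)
The plan is by contradiction. Assume $\bT \in \cC$ has $u > 2\ell^2$ distinct columns (WLOG, by merging identical columns and aggregating the corresponding $p_c, q_c$ values, which changes neither $(\bT p, \bT q)$ nor extremity) but $(\bT p, \bT q)$ is an extreme point of $\cA$. The goal is to construct $\bE \neq 0$ with $\bT \pm \epsilon \bE \in \cC$ for some $\epsilon > 0$ and $(\bE p, \bE q) \neq (0,0)$, contradicting extremity.

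The first step is to translate extremity into a tangent-space statement: $(\bT p, \bT q)$ is extreme iff every $\bE$ in the lineality cone
\[
T_0(\bT) := \left\{\bE :\ \textstyle\sum_i \bE(i,c) = 0\ \forall c,\ \bE(i,c)=0\ \text{if}\ \bT(i,c)=0,\ \bE(i,c)=\gamma_i \bE(i,c')\ \text{if}\ \bT(i,c)=\gamma_i\bT(i,c')+\nu_i\right\}
\]
satisfies $\bE p = \bE q = 0$. Specializing to single-column perturbations $\bE(\cdot, c) = v$ with $\sum_i v(i)=0$, membership in $T_0(\bT)$ forces $v$ to vanish on $Z(c) := \{i : \text{some constraint involving column } c \text{ is tight in row } i\}$, while $(\bE p, \bE q) = (p_c v, q_c v)$. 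Hence, if some column has $|F(c)| := |[\ell] \setminus Z(c)| \ge 2$ and $(p_c, q_c) \neq (0, 0)$, then $(\bT p, \bT q)$ is not extreme. After pruning columns with $(p_c, q_c) = (0,0)$ (which do not affect $(\bT p, \bT q)$), extremity forces every remaining column to satisfy $|F(c)| \le 1$---i.e.\ to sit at a tight (max, min, or zero) position in all but at most one row.

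Next comes a combinatorial enumeration of columns with $|F(c)| \le 1$: each such column is determined (up to its single possibly-free coordinate) by (a) the identity of its free row, if any, and (b) the pattern of tight positions (row-max $M_i$, row-min $m_i$, or zero) at the remaining rows. A careful row-by-row count, leveraging that in each row the tight LP structure admits at most two extremal levels (one max and one min) plus possibly zero, shows that there are at most $2\ell^2$ distinct column types---roughly, $\ell$ choices for the free row times $2\ell$ choices of (row, extremal level) incidences. Consequently, $u > 2\ell^2$ forces some two distinct columns $c_1, c_2$ to coincide in type by pigeonhole.

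The main obstacle is then turning this type-coincidence into an explicit non-extremity witness. The natural candidate is an antisymmetric two-column perturbation $\bE(\cdot, c_1) = v$, $\bE(\cdot, c_2) = -v$ (with appropriate $\gamma_i$-rescaling in any row where $c_1, c_2$ are tight to each other), yielding $(\bE p, \bE q) = ((p_{c_1}-p_{c_2})v, (q_{c_1}-q_{c_2})v)$. Because $c_1$ and $c_2$ share a type, their tight-constraint patterns coincide, so a suitable nonzero $v \in T_0(\bT)$ exists; the image is then nonzero provided $(p_{c_1}, q_{c_1}) \neq (p_{c_2}, q_{c_2})$. Degenerate cases where many type-equivalent columns additionally share $(p_c, q_c)$ can be handled by aggregating such columns and passing to a multi-column perturbation, in analogy with the three-column move in the proof of \Cref{thm:ext-point-comm}.
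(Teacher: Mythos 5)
The proposal has a genuine gap, and the central miscounting is fatal to the argument. Your single-column perturbation step is correct: if $(\bT p, \bT q)$ is extreme, each column of $\bT$ can have at most one free (loose) coordinate. But the number of distinct column \emph{types} this allows is $\ell \cdot 2^{\ell - 1}$, not $2\ell^2$: there are $\ell$ choices for the free row, and for each of the remaining $\ell - 1$ rows the entry sits at the row-min or the row-max, giving $2^{\ell - 1}$ patterns. This is exactly the exponential bound that the paper proves as a warm-up (\Cref{claim:num_uniq_colums}), and it follows already from the fact that $\bT$ is an extreme point of $\cC$, with no use of $p,q$ at all. Your ``$\ell$ choices for the free row times $2\ell$ choices of (row, extremal level) incidences'' is not a valid enumeration: a column is specified by the level (min/max) in \emph{each} of the $\ell-1$ non-free rows independently, and there is no way to compress those $\ell-1$ binary choices down to a single $O(\ell)$-sized choice.

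Worse, even if $2\ell^2$ types were the right count, the pigeonhole-then-perturb step cannot close the argument. Once the free row and the pattern of tight levels in the other rows are fixed, the free entry is determined by column stochasticity, so two columns of the \emph{same} type are the \emph{same} column. Distinct columns never share a type, so the pigeonhole draws a contradiction only against your (incorrect) upper bound on types, not against extremity of $(\bT p, \bT q)$. Your proposed antisymmetric two-column perturbation $\bE(\cdot, c_1) = v$, $\bE(\cdot, c_2) = -v$ also collapses: for it to lie in the tangent cone, $v$ must vanish outside the free row of each column, and if $c_1, c_2$ share a free row $r$ then $v$ is supported on $\{r\}$ and the column-sum constraint $\sum_i v_i = 0$ forces $v = 0$.

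What is missing is a mechanism that uses the \emph{joint range} $\cA$ rather than just the polytope $\cC$. The paper's route is the forbidden-structure lemma (\Cref{lem:forbidden-structure}): after ordering columns by likelihood ratio $p_i/q_i$, a configuration of two rows $r,r'$ and three columns $i_1 < i_2 < i_3$ with alternating not-min-tight / not-max-tight entries admits two perturbations (one in columns $i_1, i_2$, one in columns $i_2, i_3$) that each preserve $\bT q$ and move $\bT p$ in opposite directions along $v_r - v_{r'}$, exhibiting $(\bT p, \bT q)$ as a nontrivial convex combination. The $2\ell^2$ bound then comes from a pigeonhole on pairs of columns: any two distinct columns $c, c'$ disagree in at least two rows in opposite directions, so there are rows $g(c,c'), h(c,c')$ witnessing not-min-tight / not-max-tight entries; pairing up the $\ell'$ distinct columns and pigeonholing over the $\leq \ell(\ell-1)$ choices of $(g,h)$ forces a repeated pair once $\ell'/2 > \ell(\ell-1)$, which yields the three-column forbidden pattern. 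The critical ingredients you are not using are (i) the monotone ordering of the likelihood ratios $\theta_i = p_i/q_i$, and (ii) a multi-column mass-transfer argument across two rows, both of which are essential to bring the bound down from exponential in $\ell$ to polynomial.
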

We prove this result in \Cref{sec:forbidden-structure} after proving a quantitatively weaker, but simpler, result in \Cref{sec:exp-bound-uniq-columns}.

\subsubsection{Warm-Up: An Exponential Bound on the Number of Unique Columns}
\label{sec:exp-bound-uniq-columns}
In this section, we first prove a weaker version of
\Cref{thm:ext-point-priv-uniq-col-poly},
where we upper-bound the number of unique columns
in the extreme points of $\cC$ from \Cref{def:family-of-channels} (not just
those that lead to extreme points of $\cA$)
by an exponential in $\ell$.
In fact, this bound will be applicable for a broader
class of channels that satisfy the following property:
\begin{condition}[Only one free entry per column]
	\label{def:one_loose_entry}
	Let $\cC$ be a convex set of channels from $[k]$ to $[\ell]$.
	Let $\bT$ be an extreme point of $\cC$.
	Then there exist numbers $\{m_1,\dots,m_\ell\}$ and $\{M_1,\dots,M_\ell\}$ such that
	for every column $c \in [k]$, there exists at most a single row $r \in [\ell]$
	such
	that $ \bT(r,c) \not \in \{m_r, M_r\}$.
	We call such entries \emph{free}.

\end{condition}
We show in \Cref{sec:properties_of_private_channels} that extreme points of the
LP channels from \Cref{def:family-of-channels} satisfy
\Cref{def:one_loose_entry}.
The following claim bounds the number of unique columns in any extreme point of
$\cC$, and thus also implies a version of \Cref{thm:ext-point-priv-comm} with
$\ell \cdot 2^{\ell -1}$ instead of $2\ell ^2$ (cf. \Cref{fact:ext-point-linear-transform}).

\begin{claim}[Number of unique columns in an extreme point is at most exponential in $\ell $]
	\label{claim:num_uniq_colums}
	Let $\cC$ be a set of channels satisfying the property of
	\cref{def:one_loose_entry}.
	Let $\bT$ be an extreme point of $\cC$.
	Then the number of unique columns in $\bT$ is at most $\ell \cdot 2^{\ell -1}$.
	In particular, $\bT$ can be written as $\bT_2 \times \bT_1$, where $\bT_1$ is a
	deterministic map from $[k]$ to $[\ell ']$ and $\bT_2$ is a map from $[\ell
				']$ to
	$[\ell]$, for $\ell ' = \ell \cdot 2^{\ell -1}$.
\end{claim}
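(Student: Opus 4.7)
The plan is to prove the claim as a direct counting argument using only the structural guarantee provided by Condition~5.2, followed by a straightforward decomposition of $\bT$ into a deterministic relabeling composed with a smaller channel.

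First I would invoke \Cref{def:one_loose_entry} on the extreme point $\bT$ to obtain the constants $m_1,\dots,m_\ell$ and $M_1,\dots,M_\ell$ with the property that every column $c \in [k]$ of $\bT$ has at most one row $r \in [\ell]$ whose entry $\bT(r,c)$ lies outside $\{m_r, M_r\}$. I would then partition the columns of $\bT$ into $\ell$ (not necessarily disjoint) classes according to which row is the ``free'' row; columns with no free row can be placed, for the purpose of the upper bound, into any single class.

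Second, I would count the number of distinct columns within a fixed class, say with free row $r$. The other $\ell - 1$ entries of such a column each take one of two prescribed values, contributing at most $2^{\ell - 1}$ patterns. Crucially, the free entry itself is \emph{not} an additional degree of freedom: since $\bT$ is column-stochastic, the free entry is forced to equal $1 - \sum_{s \neq r} \bT(s,c)$, which is uniquely determined once the pattern on the non-free rows is fixed. Summing over the $\ell$ choices of free row yields at most $\ell \cdot 2^{\ell - 1}$ distinct columns, as claimed.

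Finally, for the decomposition, letting $\ell' \leq \ell \cdot 2^{\ell - 1}$ be the number of distinct columns of $\bT$, I would take $\bT_2 \in \real^{\ell \times \ell'}$ to be the matrix whose columns are exactly these distinct columns of $\bT$, and $\bT_1 \in \real^{\ell' \times k}$ to be the deterministic channel that maps each input $i \in [k]$ to the index in $[\ell']$ of the unique column of $\bT_2$ equal to the $i$-th column of $\bT$. Then the $i$-th column of $\bT_2 \bT_1$ equals the $\bT_1(i)$-th column of $\bT_2$, which by construction is the $i$-th column of $\bT$, so $\bT = \bT_2 \bT_1$. Both $\bT_1$ and $\bT_2$ are valid channels.

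There is essentially no obstacle here; the only subtlety worth flagging is the observation that the free entry is pinned down by column-stochasticity, so the counting is genuinely $2^{\ell-1}$ (rather than a continuum) per free row. Columns with no free entries are harmlessly absorbed into any of the $\ell$ classes, so the bound remains $\ell \cdot 2^{\ell - 1}$ rather than $\ell \cdot 2^{\ell - 1} + 2^\ell$.
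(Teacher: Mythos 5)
Your proposal is correct and follows essentially the same argument as the paper: $\ell$ choices for the location of the free entry, $2^{\ell-1}$ assignments of $\{m_j, M_j\}$ to the remaining rows, with the free entry pinned down by column-stochasticity, giving $\ell \cdot 2^{\ell-1}$ distinct columns. Your explicit construction of the decomposition $\bT = \bT_2 \times \bT_1$ just spells out what the paper leaves implicit.
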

\begin{proof}
	We use the notation from \Cref{def:one_loose_entry}.
	For each column, there are $\ell $ possible locations of a potential free
	entry.
	Let this location be $j^*$; the value at this location is still flexible.
	Now let us consider the number of ways to assign values at the remaining
	locations.
	For each $j \in [\ell] \setminus \{j^*\}$, the entry is either $m_j$ or $M_j$
	(since $j$ is not a free entry).
	Thus, there are $2^{\ell -1}$ such possible assignments.
	Since the column entries sum to one, each of those $2^{\ell -1}$ assignments
	fixes the value at the $j^*$ location, as well.
	Thus, there are at most $\ell \cdot 2^{\ell -1}$ unique columns in $\bT$.
\end{proof}

\subsubsection{Forbidden Structure in Extreme Points Using the Joint Range}
\label{sec:forbidden-structure}
In \Cref{claim:num_uniq_colums}, we considered the extreme points of LP
channels.
However, we are actually interested in a (potentially much) smaller set: the
extreme points that correspond to the extreme points of the joint range $\cA$.
In this section, we identify a necessary structural property for extreme points
of LP channels that lead to extreme points of the joint range.
We begin by defining the notion of a ``loose'' entry in a channel in $\cC$:
\begin{definition}[Loose and tight entries]
	\label{def:loose-and-tight}
	Let $\bT$ be a channel in $\cJ_{\ell,k}^{\gamma, \nu}$ from
	\Cref{def:family-of-channels} that maps from $[k]$ to $[\ell]$.
	Let $\{m_1,\dots,m_\ell\}$ and $\{M_1, \dots, M_\ell\}$ be the row-wise minimum
	and maximum entries, respectively.
	For $c \in [k]$ and $r \in [\ell]$,
	we say an entry $\bT(r,c)$ is \emph{max-tight} if
	$\bT(r,c) = M_r$ and $M_r = \gamma_r m_r + \nu_r$.
	An entry $\bT(r,c)$ is \emph{min-tight} if
	$\bT(r,c) = m_r$ and $M_r = \gamma_r m_r + \nu_r$.
	An entry that is neither max-tight nor min-tight is called \emph{loose}.
\end{definition}
\begin{remark}
	\label{rem:broad-tight}
	Our results in this section continue to hold for a slightly more general
	definition, where we replace the linear functions $\gamma_jx + \nu_j$ by
	arbitrary monotonically increasing functions $f_j(x)$.
	We focus on linear functions for simplicity and clarity.
	(Also see \Cref{rem:extending-channels}.)
\end{remark}
If the rest of the row is kept fixed, a max-tight entry cannot be increased
without violating privacy constraints, but it can be decreased.
Similarly, a min-tight entry cannot be decreased without violating privacy
constraints, but it can be increased.
Loose entries can be either increased or decreased without violating privacy
constraints.
These perturbations need to be balanced by adjusting other entries in the same
column to satisfy column stochasticity; for example, a max-tight entry can be
decreased while simultaneously increasing a min-tight or loose entry in the
same column.
This is formalized below:

\begin{condition}[Mass can be transferred from entries that are not tight]
	\label{def:mass_transfer}
	Let $\cC$ be a set of channels from $[k]$ to $[\ell]$.
	Let $\bT$ be any extreme point of $\cC$.
	Suppose there are two rows $(r, r')$ and two columns $(c, c')$ (in the display
	below, we take $r < r'$ and $c<c'$ without loss of generality) with values
	$(m,m', M, M')$, as shown below:
	\begin{align*}
		\left[
			\begin{matrix}
				\cdots & \cdots &
				\cdots & \cdots & \cdots\\ \cdots & M & \cdots & m & \cdots \\ \cdots & \cdots
				& \cdots & \cdots & \cdots \\ \cdots & m' & \cdots & M' & \cdots \\ \cdots &
				\cdots & \cdots & \cdots & \cdots
			\end{matrix}
			\right],
	\end{align*}
	such that:
	\begin{itemize}
		\item $\bT(r,c)$ and $\bT(r', c')$ are not min-tight ($M$ and $M'$ above,
		      respectively).
		\item $\bT(r,c')$ and $\bT(r',c)$ are not max-tight ($m$ and $m'$ above,
		      respectively).
	\end{itemize}

	Then there exist $\epsilon' > 0$ and $\delta' > 0$ such that for all $\epsilon
		\in [0,
		\epsilon')$ and $\delta \in [0, \delta')$, the following matrix $\bT'$
	also belongs to $\cC$:
	\begin{align*}
		\bT' = \left[
			\begin{matrix}
				\cdots &
				\cdots & \cdots & \cdots & \cdots\\ \cdots & M - \epsilon & \cdots & m + \delta
				& \cdots \\ \cdots & \cdots & \cdots & \cdots & \cdots \\ \cdots & m' +
				\epsilon & \cdots & M' - \delta & \cdots \\ \cdots & \cdots & \cdots & \cdots &
				\cdots
			\end{matrix}
			\right]\,\,,
	\end{align*}
	where the omitted entries of $\bT$
	and $\bT'$ are the same.
\end{condition}

We show that the channels from \Cref{def:family-of-channels} satisfy
\Cref{def:mass_transfer} in \Cref{sec:properties_of_private_channels}.
Using \Cref{def:mass_transfer},
we show that the following structure is forbidden in
the channels that lead to extreme points of the joint range:
\begin{lemma}
	\label{lem:forbidden-structure}
	Let $p$ and $q$ be two distributions on $[k]$.
	Let $\cC$ be the set of LP channels from \Cref{def:family-of-channels} (or,
	more generally, a convex set of channels satisfying \Cref{def:mass_transfer})
	from $[k]$ to $[\ell]$.
	Suppose $p_i/q_i$ is strictly increasing in $i$.
	Let $\bT \in \cC$ have the following structure: there are two rows $(r, r')$ (in
	the display below, $r < r'$ is taken without loss of generality) and three
	columns $i_1 < i_2 < i_3$ with values $(m,m', m'', M, M', M'')$, as shown below:
	\begin{align*}
		\left[
			\begin{matrix}
				\cdots & \cdots & \cdots & \cdots & \cdots
				& \cdots & \cdots \\ \cdots & M & \cdots & m & \cdots & M' & \cdots \\ \cdots &
				\cdots & \cdots & \cdots & \cdots & \cdots & \cdots \\ \cdots & m' & \cdots &
				M'' & \cdots & m'' & \cdots \\ \cdots & \cdots & \cdots & \cdots & \cdots &
				\cdots & \cdots \\
			\end{matrix}
			\right],
	\end{align*}
	such that:
	\begin{itemize}
		\item $\bT(r,i_1), \bT(r, i_3),$ and $\bT(r', i_2)$ are not min-tight ($M, M',$
		      and $M''$ above, respectively).
		\item $\bT(r,i_2), \bT(r',i_1),$ and $\bT(r', i_3)$ are not max-tight ($m, m',$
		      and $m''$ above, respectively).
	\end{itemize}
    Let $\cA := \{(\bT p, \bT q): \bT \in \cC\}$.
	Then $(\bT p, \bT q)$ cannot be an extreme point of $\cA$.
\end{lemma}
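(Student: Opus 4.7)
The idea is to exhibit two distinct channels $\bT_1, \bT_2 \in \cC$ such that $\tfrac{1}{2}(\bT_1 p + \bT_2 p) = \bT p$ and $\tfrac{1}{2}(\bT_1 q + \bT_2 q) = \bT q$, with $(\bT_1 p, \bT_1 q) \neq (\bT p, \bT q)$; this directly shows that $(\bT p, \bT q)$ is not an extreme point of $\cA$. Each $\bT_j$ will be a perturbation of $\bT$ obtained by a single use of \Cref{def:mass_transfer}: $\bT_1$ arises from a transfer on the $2 \times 2$ submatrix at rows $\{r, r'\}$ and columns $\{i_1, i_2\}$ (whose ``diagonal'' entries $M, M''$ are not min-tight and ``anti-diagonal'' entries $m, m'$ are not max-tight, matching the hypothesis directly), and $\bT_2$ arises from a transfer on rows $\{r, r'\}$ and columns $\{i_2, i_3\}$, labeled in the order $(c, c') = (i_3, i_2)$ so that the ``diagonal'' entries $M', M''$ are not min-tight and the ``anti-diagonal'' entries $m, m''$ are not max-tight.

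Writing $v := e_{r'} - e_r$ and $\theta_i := p_i/q_i$, I run the first transfer with parameters $(\epsilon_1, 0)$, giving $D_1 := \bT_1 - \bT$ that perturbs only column $i_1$ by $\epsilon_1 v$. I run the second transfer with parameters $(\epsilon_2, \delta_2)$, giving $D_2 := \bT_2 - \bT$ that perturbs column $i_3$ by $\epsilon_2 v$ and column $i_2$ by $-\delta_2 v$. The midpoint conditions on $\bT p$ and $\bT q$ then reduce to the $2 \times 2$ linear system
\[
\epsilon_1 p_{i_1} + \epsilon_2 p_{i_3} = \delta_2 \, p_{i_2},
\qquad
\epsilon_1 q_{i_1} + \epsilon_2 q_{i_3} = \delta_2 \, q_{i_2}.
\]
Normalizing $\delta_2 = 1$ and applying Cramer's rule, the strict ordering $\theta_{i_1} < \theta_{i_2} < \theta_{i_3}$ gives $\epsilon_1, \epsilon_2 > 0$; the edge case $q_{i_3} = 0$ (which forces $\theta_{i_3} = \infty$ and $p_{i_3} > 0$) is handled by solving the $q$-equation first for $\epsilon_1 = q_{i_2}/q_{i_1} > 0$ and then recovering a strictly positive $\epsilon_2$ from the $p$-equation.

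Scaling all parameters by a small $s > 0$ places both transfers within the tolerance bounds guaranteed by \Cref{def:mass_transfer}, so $\bT_1 = \bT + sD_1 \in \cC$ and $\bT_2 = \bT + sD_2 \in \cC$. Non-triviality of the perturbation follows from $\bT_1 p - \bT p = s\epsilon_1 p_{i_1} v$ and $\bT_1 q - \bT q = s\epsilon_1 q_{i_1} v$: since $\epsilon_1 > 0$ and $(p_{i_1}, q_{i_1}) \neq (0, 0)$, at least one of these is nonzero, so $(\bT_1 p, \bT_1 q) \neq (\bT p, \bT q)$. The main subtlety is to treat $D_1$ and $D_2$ as two independent single-transfer perturbations of $\bT$, rather than summing them into a single matrix $\bT + s(D_1 + D_2)$, so that each membership $\bT + sD_j \in \cC$ is a direct instance of \Cref{def:mass_transfer} and no fresh verification of LP constraints is required.
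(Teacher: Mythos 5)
Your proof is correct and takes essentially the same route as the paper: two single-transfer perturbations (instances of \Cref{def:mass_transfer}) located on the column pairs $\{i_1,i_2\}$ and $\{i_2,i_3\}$, with parameters chosen so that the midpoint of the two perturbed joint-range points equals $(\bT p, \bT q)$. The only superficial difference is parametrization --- the paper constrains each perturbation separately to preserve $\bT q$ exactly (via $\epsilon q_{i_1}=\delta q_{i_2}$, $\epsilon'q_{i_2}=\delta'q_{i_3}$) and then balances the $p$-shifts, whereas you set $\delta_1 = 0$ and solve the resulting $2\times 2$ system directly --- but the mechanism, the reliance on \Cref{def:mass_transfer}, and the treatment of the edge case $q_{i_3}=0$ all coincide.
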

\begin{proof}
    Firstly, the set $\cA$ is convex since $\cC$ is convex.
	For some $\epsilon > 0$ and $\delta > 0$ to be decided later, consider the
	following perturbed matrices:
	\begin{align*}
		\bT' &= \left[
			\begin{matrix}
				\cdots & \cdots & \cdots & \cdots & \cdots & \cdots & \cdots \\
				\cdots & M - \epsilon & \cdots & m + \delta& \cdots & M' & \cdots \\ \cdots &
				\cdots & \cdots & \cdots & \cdots & \cdots & \cdots \\ \cdots & m' + \epsilon &
				\cdots & M'' - \delta & \cdots & m'' & \cdots \\ \cdots & \cdots & \cdots &
				\cdots & \cdots & \cdots & \cdots \\
			\end{matrix}
			\right], \\ 
		\bT'' &= \left[
			\begin{matrix}
				\cdots & \cdots & \cdots & \cdots & \cdots & \cdots & \cdots \\
				\cdots & M & \cdots & m + \epsilon'& \cdots & M' -\delta' & \cdots \\ \cdots &
				\cdots & \cdots & \cdots & \cdots & \cdots & \cdots \\ \cdots & m' & \cdots &
				M'' - \epsilon' & \cdots & m'' + \delta'& \cdots \\ \cdots & \cdots & \cdots &
				\cdots & \cdots & \cdots & \cdots \\
			\end{matrix}
			\right].
	\end{align*}
	To be
	specific, the entries of $\bT'$, $\bT''$, and $\bT$ match except in the six
	locations highlighted here.
    Since $\cC$ satisfies \Cref{def:mass_transfer} (see \Cref{cl:LP-channels-condition}), both $\bT'$ and $\bT''$ belong to the set $\cC$ if
	$\epsilon$, $\epsilon'$, $\delta$, and $\delta$ are small enough and positive.
	We will now show that there exist choices of these parameters such that
	$(\bT p, \bT q)$ is a convex combination of $(\bT' p, \bT' q)$ and $(\bT'' p,
		\bT'' q)$, and these three points are distinct elements of $\cA$.
Consequently, $(\bT p, \bT q)$ will not be an extreme point of $\cA$.
    
	For any $j \in \ell$, let $v_j$ denote the vector in $\R^\ell $ that is $1$ at
	the
	$j^{\text{th}}$ location and $0$ otherwise.
	Define $\theta_i := p_i/q_i$ to be the likelihood ratio.
	If $\theta_i < \infty$, then $p_i = \theta_i q_i$.
    Since $\theta_i$ is strictly increasing in $i$, only $\theta_{i_3}$ may be infinity.
	Let us first suppose that $\theta_{i_3} < \infty$.
	We will consider the case when $\theta_{i_3}$ might be infinity in the end.

	Let us begin by analyzing how $\bT'$ transforms $p$ and $q$.
	Since $\bT'$ differs from $\bT$ only in the four locations mentioned above, $\bT '
		p$ and $\bT p$, both of which are distributions on $[\ell]$, differ only in the
	elements $r$ and $r'$ of $[\ell]$.
	On the element $r$, $(\bT' q)_{r} - (\bT q)_{r}$ is equal to $ - \epsilon
		q_{i_1} +
		\delta q_{i_2} $, and equal to its negation on the element $r'$.
	In particular, they satisfy the relation
	\begin{align*}
		\bT' q &= \bT q + (-\epsilon
		q_{i_1} + \delta q_{i_2} )  \left( v_{r} - v_{r'} \right).
	\end{align*}
	If $ \epsilon q_{i_1} = \delta q _{i_2}$, we have $\bT'q = \bT
		q$.
	Under the same setting, $p$ is transformed as follows:
	\begin{align*}
		\bT' p &=
		\bT p + (-\epsilon p_{i_1} + \delta p_{i_2} ) \left( v_{r} - v_{r'} \right)
		\\ &= \bT p + (-\epsilon \theta_{i_1} q_{i_1} + \delta \theta_{i_2} q_{i_2} )
		 \left( v_{r} - v_{r'} \right)\\ &= \bT p + \epsilon
		q_{i_1}(-\theta_{i_1} + \theta_{i_2})  \left( v_{r} - v_{r'} \right).
	\end{align*}
	We now analyze the effect of $\bT''$,
	which satisfies
	\begin{align*}
		\bT'' q &= \bT q + (\epsilon'
		q_{i_2} - \delta' q_{i_3} ) \left( v_{r} - v_{r'} \right).
	\end{align*}
	If $ \epsilon' q_{i_2} = \delta' q _{i_3}$, we have $\bT''q = \bT q$.
	Under the same setting, $p$ is transformed as follows:
	\begin{align*}
		\bT'' p
		&= \bT p + (\epsilon' p_{i_2} - \delta' p_{i_3} ) \left( v_{r} - v_{r'}
		\right) \\ &= \bT p + (-\epsilon' \theta_{i_2} q_{i_2} + \delta' \theta_{i_3}
		q_{i_3} )  \left( v_{r} - v_{r'} \right)\\ &= \bT p + \epsilon'
		q_{i_2}(-\theta_{i_2} + \theta_{i_3})  \left( v_{r} - v_{r'} \right).
	\end{align*}
	Now observe that $\theta_{i_1} < \theta_{i_2} < \theta_{i_3}$.
	By choosing $\epsilon > 0$ and $\epsilon'>0$ small enough such that
	$\epsilon q_{i_1} (- \theta_{i_1} + \theta_{i_2}) = \epsilon' q_{i_2} (-
		\theta_{i_2} + \theta_{i_3})$, we obtain
	\begin{align*}
		(\bT p , \bT q) =
		\frac{1}{2}\cdot \left( \bT' p , \bT' q\right) + \frac{1}{2}\cdot \left( \bT' p
		, \bT' q\right),
	\end{align*}
	and all three points are distinct elements of
	$\cA$.
    Such a choice of $\epsilon$ and $\epsilon'$ always exists, since both $q_{i_1} (- \theta_{i_1} + \theta_{i_2})$ and $q_{i_2} (-
		\theta_{i_2} + \theta_{i_3})$ are positive and finite. 
	Thus, $(\bT p, \bT q)$ is not an extreme point of $\cA$.

	Let us now consider the case when $\theta_{i_3} = \infty$, or equivalently,
	$q_{i_3} = 0$.
	Define $\epsilon'$ to be $0$, so that $\bT''q = \bT q$ and $\bT'' p = \bT p
		- \delta' p_{i_3} \left( v_{r} - v_{r'} \right)$.
	Then choose $\delta'> 0$ and $\epsilon > 0$ small enough such that $\epsilon
		q_{i_1} \left( \theta_{i_2} - \theta_{i_1} \right) = \delta' p_{i_3}$, which is
	possible since both sides are positive and finite.
	Thus, $(\bT p, \bT q)$ is a non-trivial convex combination of
	$(\bT 'p, \bT' q)$ and $(\bT ''p, \bT'' q)$, so is not an extreme point of
	$\cA$.
\end{proof}

\subsubsection{Proof of \Cref{thm:ext-point-priv-uniq-col-poly}}
\label{sec:proof-lem-ext-point-uniq-poly}

\begin{proof}
    Without loss of generality, we assume that the likelihood ratios $p_i/q_i$ are unique and strictly increasing in $i$. 
    We refer the reader to the proof of \Cref{thm:ext-point-comm} and \Cref{claim:degenerate-p-q} for more details.

	Let $\bT \in \cC$ be a channel from $[k]$ to $[\ell]$ such that $(\bT p
		,\bT q)$ is an extreme point of $\cA$.
	Suppose that there are $\ell '$ unique columns in $\bT$ with $\ell ' > 2\ell
		^2$.
	From now on, we assume that $\ell ' = 2\ell^2$; otherwise, we apply the following
	argument to
	the first $2 \ell^2$ distinct columns.

	Let $c, c' \in [k]$ be such that the $c^{\text{th}}$ and $c'^{\text{th}}$ columns of $\bT$ are distinct.
	Observe that for every pair of distinct columns $c$ and $c'$, 
     there are two
	rows such that $c^{\text{th}}$ column has a strictly bigger value than the $c'^{\text{th}}$
	column on one row, and vice versa on the another row.
	This is because both of the columns sum up to $1$, so if a particular
	column has a larger entry in a row, its entry must be smaller in a different
	row.
	In particular, there exist two rows $g(c,c')$ and $h(c, c')$ such that
	$ \bT(g(c,c'), c) > \bT(g(c,c'), c')$ and $ \bT(h(c, c'), c ) < \bT(h(c,
			c'), c')$.
	As a result,
	$\bT(g(c,c'), c)$ and $\bT(h(c,c'), c')$ are not min-tight, and $\bT(g(c,c'),
			c')$ and $\bT(h(c,c'), c)$ are not max-tight.

	We now order the distinct columns of $\bT$ in the order of their appearance
	from left to right.
	Let $i_1, i_2, \ldots, i_{\ell '}$ be the indices of the unique columns.
	For example, the first distinct column $i_1$ is the first column of $\bT$
	(corresponding to the element $1$).
	The second distinct column $i_2$ is the first column of $\bT$ that is different
	from the first column.
	The third distinct column is the first column of $\bT$ that is different from
	the first two columns.
	Let $\cI$ be the set of unique column indices of $\bT$.

	Now, we divide the distinct columns in $\bT$ into pairs: $\cH =
		\{(i_1,i_2), (i_3,i_4), \ldots, (i_{\ell '-1}, i_{\ell '})\}$.
	The total number of possible choices in $\cH$ is $\ell'/2$, and for
	every $(m,m+1)$ in $\cH$, the possible number of choices of $\left( g(i_m,
		i_{m+1}), h(i_m, i_{m+1})\right)$ is at
	most $\ell (\ell -1)$, since both of these lie in $[\ell]$ and are distinct.
	Thus, there must exist two pairs in $\cH$ whose corresponding indices are the
	same, since $\frac{\ell'}{2} = \ell^2 > \ell(\ell-1)$.

	Without loss of generality, we let these pairs of columns be $(i_1,i_2)$
	and $(i_3,i_4)$.
	Let $r := g(i_1, i_{2}) = g(i_3, i_{4})$ and $r' := h(i_1, i_{2}) = h(i_3,
		i_{4})$.
	Then the previous discussion implies that:
	\begin{itemize}
		\item
		      $\bT(r,i_1)$ and $\bT(r,i_3)$ are not min-tight, and $\bT(r',i_1)$ and
		      $\bT(r',i_3)$ are not max-tight.

		\item $\bT(r,i_2)$ and $\bT(r,i_4)$ are not max-tight, and $\bT(r',i_2)$ and
		      $\bT(r',i_4)$ are not min-tight.
	\end{itemize}
	Thus, the columns $i_1, i_2$, and $i_3$ satisfy the conditions of
	\Cref{lem:forbidden-structure}, i.e., they exhibit the forbidden structure.
	This implies that $(\bT p, \bT q)$ cannot be an extreme point of $\cA$.
	Therefore, $\ell ' \leq 2\ell ^2$.
\end{proof}

\subsection{Proof of \Cref{thm:ext-point-priv-comm}: Unique Columns to Threshold Channels}
\label{sec:unique_columns_to_threshold_channels}
In this section, we provide the proof of \Cref{thm:ext-point-priv-comm} using \Cref{thm:ext-point-priv-uniq-col-poly}.
Noting that our main structural result is more widely applicable
(\Cref{def:mass_transfer}),
we prove a more general version of \Cref{thm:ext-point-priv-comm} below for
\Cref{def:family-of-channels}.
Before doing so, we require an additional property on the
set of our channels, proved in \Cref{sec:properties_of_private_channels}:
\begin{restatable}[Closure under pre-processing]{claim}{ClComposition}
	\label{cl:composition}
	The set $\cJ_{\ell,k}^{\gamma, \nu}$ satisfies the following closure property under
	pre-processing:
	\begin{align}
	\label{eq:composition-pre-processing}
	\cJ_{\ell,k}^{\gamma, \nu} = \bigcup_{\ell'=1}^k \left\{\bT_2 \times \bT_1 : \bT_2 \in \cJ_{\ell, \ell'}^{\gamma, \nu} \textnormal{
			and } \bT_1 \in \cT_{\ell', k} \right\}.
	\end{align}
\end{restatable}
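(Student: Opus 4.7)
I establish the two inclusions separately. The $(\subseteq)$ direction is immediate: given any $\bT \in \cJ_{\ell,k}^{\gamma,\nu}$, take $\ell' = k$, let $\bT_1$ be the identity channel in $\cT_{k,k}$, and set $\bT_2 = \bT$. Then $\bT = \bT_2 \times \bT_1$ is the required decomposition, with $\bT_2 \in \cJ_{\ell,\ell'}^{\gamma,\nu}$ and $\bT_1 \in \cT_{\ell',k}$.

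For the $(\supseteq)$ direction, fix any $\ell' \in [k]$, any $\bT_2 \in \cJ_{\ell,\ell'}^{\gamma,\nu}$, and any $\bT_1 \in \cT_{\ell',k}$, and set $\bT := \bT_2 \times \bT_1$. The matrix product of column-stochastic matrices is column-stochastic, so $\bT \in \cT_{\ell,k}$ automatically; the only nontrivial point is to verify the LP constraint~\eqref{eq:lp-convex-set} for $\bT$.

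The key step is a pointwise bound on $\bT_2$. For each row $j \in [\ell]$, let $m_j^\star \in [\ell']$ be a minimizer of $\bT_2(j, \cdot)$. The LP constraint on $\bT_2$ gives $\bT_2(j, m) \le \gamma_j \bT_2(j, m_j^\star) + \nu_j$ for every $m \in [\ell']$. Multiplying by $\bT_1(m, i) \ge 0$, summing over $m$, and using $\sum_m \bT_1(m, i) = 1$, we obtain $\bT(j, i) \le \gamma_j \bT_2(j, m_j^\star) + \nu_j$ for every $i \in [k]$. Finally, $\bT(j, i') = \sum_{m'} \bT_2(j, m') \bT_1(m', i')$ is a convex combination of the values $\{\bT_2(j, m')\}_{m' \in [\ell']}$, so $\bT_2(j, m_j^\star) \le \bT(j, i')$. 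Chaining these inequalities yields $\bT(j, i) \le \gamma_j \bT(j, i') + \nu_j$, confirming $\bT \in \cJ_{\ell,k}^{\gamma,\nu}$.

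There is no substantive obstacle. The only subtle point is that the naive strategy of pairing each $m$ with a specific $m''$ matched to $i'$ does not work once $\bT_1$ is allowed to be stochastic, so I route through the universal lower bound $\bT_2(j, m_j^\star)$; this cleanly absorbs the additive slack $\nu_j$ and specializes correctly to the pure-LDP case ($\nu_j = 0$, $\gamma_j = e^\epsilon$), recovering the familiar post-processing property of differential privacy.
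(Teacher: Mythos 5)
Your proof is correct and matches the paper's argument: both establish the $\subseteq$ direction via the identity channel, and both establish the $\supseteq$ direction by observing that each entry $\bT(j,i)$ of $\bT_2 \times \bT_1$ is a convex combination of the $j$-th row of $\bT_2$, so the row-wise extrema can only tighten, and the LP constraint is preserved. Your chaining through the universal row minimum $\bT_2(j, m_j^\star)$ is an equivalent repackaging of the paper's observation that $M_j' \le M_j$ and $m_j' \ge m_j$.
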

Informally, if we take an arbitrary channel $\bT_1$ and compose it with an LP private channel $\bT_2$, the composition $\bT_2 \times \bT_1$ is also LP private.

The following result is thus a more general version of
\Cref{thm:ext-point-priv-comm}:
\begin{theorem}[Structure of optimal channels]
	\label{thm:OptStructGeneral}
	Let $p$ and $q$ be distributions on $[k]$.
	For any $\ell \in \N$, let $\cC$ be the set of channels $\cJ_{\ell,k}^{\gamma,
			\nu}$ from \Cref{def:family-of-channels}.
	Let $\cA$ be the set of all pairs of distributions that are obtained by
	applying a channel from $\cC$ to $p$ and $q$, i.e.,
	\begin{align}
		\cA = \{(\bT
		p, \bT q) \mid \bT \in \cC\}.
	\end{align}
	If $(\bT p, \bT q)$ is an extreme point of $\cA$, then $\bT$ can be
	written as $\bT = \bT_2 \times \bT_1$, for some $\bT_1 \in \cTT_{\ell',k}$ and
	$\bT_2$ an extreme point of the set $\cJ_{\ell, \ell'}^{\gamma, \nu}$.
\end{theorem}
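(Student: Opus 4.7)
My plan is to combine three ingredients: \Cref{thm:ext-point-priv-uniq-col-poly} to bound the number of distinct columns of $\bT$, \Cref{thm:ext-point-comm} to supply the threshold structure, and \Cref{cl:composition} to glue them via pre-processing. Throughout, \Cref{fact:ext-point-linear-transform} lets me pass freely between extreme points of a convex set and those of its linear images; in particular, since the given extreme point $(\bT p, \bT q)\in\cA$ is the image of some extreme point of $\cC=\cJ^{\gamma,\nu}_{\ell,k}$, I may assume from the outset that $\bT$ itself is an extreme point of $\cC$.

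By \Cref{thm:ext-point-priv-uniq-col-poly}, $\bT$ has at most $\ell'\leq 2\ell^2$ distinct columns. I would then decompose $\bT=\bT_2^\circ\times\bT_1^\circ$, where $\bT_1^\circ\in\cT_{\ell',k}$ is the deterministic channel sending each input to its distinct-column index in $[\ell']$, and $\bT_2^\circ$ collects those $\ell'$ distinct columns. The LP inequalities in \Cref{def:family-of-channels} are inherited from $\bT$, so $\bT_2^\circ\in\cJ^{\gamma,\nu}_{\ell,\ell'}$. To see that $\bT_2^\circ$ is \emph{extreme} in $\cJ^{\gamma,\nu}_{\ell,\ell'}$, suppose for contradiction that $\bT_2^\circ=\tfrac{1}{2}(\bT_2^++\bT_2^-)$ with distinct $\bT_2^\pm\in\cJ^{\gamma,\nu}_{\ell,\ell'}$. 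Then \Cref{cl:composition} gives $\bT_2^\pm\times\bT_1^\circ\in\cC$; since by construction each label in $[\ell']$ is the image of at least one input under $\bT_1^\circ$, the matrices $\bT_2^+\times\bT_1^\circ$ and $\bT_2^-\times\bT_1^\circ$ are themselves distinct, contradicting extremality of $\bT$ in $\cC$.

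Finally, I would upgrade $\bT_1^\circ$ to a threshold channel without changing the image on $(p,q)$. Define $\cB:=\{(\bS p,\bS q):\bS\in\cT_{\ell',k}\}$ and its linear image $\cA_1:=\{(\bT_2^\circ u,\bT_2^\circ v):(u,v)\in\cB\}$. By \Cref{cl:composition}, $\cA_1\subseteq\cA$, so $(\bT p,\bT q)\in\cA_1$ is still extreme in $\cA_1$. Applying \Cref{fact:ext-point-linear-transform} to the linear map $(u,v)\mapsto(\bT_2^\circ u,\bT_2^\circ v)$, I pull this extreme point back to an extreme point of $\cB$; by \Cref{thm:ext-point-comm} that extreme point is realized by some threshold channel $\bT_1\in\cTT_{\ell',k}$. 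Setting $\bT_2:=\bT_2^\circ$, the channel $\bT_2\times\bT_1$ lies in $\cC$, matches $(\bT p,\bT q)$, and has exactly the required decomposition.

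The main obstacle is establishing extremality of $\bT_2^\circ$ in the second step: this relies crucially on $\bT_1^\circ$ using every output label, so that any genuine perturbation of $\bT_2^\circ$ propagates to a genuine perturbation of $\bT$. This is why the construction proceeds in two stages---first use the canonical column-identification map $\bT_1^\circ$ to secure extremality of $\bT_2^\circ$, and only afterwards replace $\bT_1^\circ$ by a threshold $\bT_1$ that preserves the fixed extreme point of $\cA$. Swapping these two stages would leave open the possibility that $\bT_1$ fails to touch some output coordinate, allowing a non-trivial perturbation of $\bT_2$ to go undetected at the level of $\cA$.
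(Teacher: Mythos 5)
Your proof is correct, and it takes a genuinely different route from the paper's. The paper establishes the convex-hull identity $\cA = \conv\bigl(\{(\bH_i \bL_j p, \bH_i \bL_j q): \bH_i \in \ext(\cJ^{\gamma,\nu}_{\ell,\ell'}), \bL_j \in \cTT_{\ell',k}\}\bigr)$ by taking an arbitrary $\bT_2 \in \cJ^{\gamma,\nu}_{\ell,\ell'}$ and $\bT_1 \in \cT_{\ell',k}$ and expanding $(\bT_2\bT_1 p, \bT_2\bT_1 q)$ as a double convex combination (first decomposing $\bT_2$ into extreme points, then invoking \Cref{thm:ext-point-comm} to decompose $(\bT_1 p, \bT_1 q)$ into threshold images); \Cref{fact:conv-ext} then finishes. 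You instead work locally with the fixed extreme point: after invoking \Cref{thm:ext-point-priv-uniq-col-poly} and writing $\bT = \bT_2^\circ \bT_1^\circ$ with $\bT_1^\circ$ the canonical column-identification map, you prove extremality of $\bT_2^\circ$ directly by a contradiction that hinges on $\bT_1^\circ$ being surjective onto $[\ell']$---a clean observation that the paper does not use---and then replace $\bT_1^\circ$ by a threshold channel via a second application of \Cref{fact:ext-point-linear-transform} on the restricted convex set $\cA_1 \subseteq \cA$. Both routes are sound and rely on the same three ingredients; the paper's version yields the slightly stronger byproduct of an explicit generating set for $\cA$, while yours is more direct for the stated pointwise conclusion and makes explicit why the two reductions (extremality of $\bT_2$ first, thresholding $\bT_1$ second) must be done in that order.
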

\begin{proof}
	Since $\cC$ is convex, the joint range $\cA$ is convex.
	By \Cref{thm:ext-point-priv-uniq-col-poly}, we know that if
	$(\bT p, \bT q)$ is an extreme point of $\cA$, then $\bT$ can
	be written as $\bT_2 \times \bT_1$, where $\bT_1 \in \cT_{\ell ',k}$ for $\ell
		' := 2\ell ^2$.
	Using \Cref{cl:composition},
	any such channel in $\cC$ is of the form $\bT_2 \times \bT_1$,
	where $\bT_1 \in \cT_{\ell ',k}$ and $\bT_2 \in \cJ_{\ell, \ell'}^{\gamma,
			\nu}$.
	Combining the last two observations, we obtain the following:
	\begin{align}
	\label{eq:conv-ext}
	\cA = \conv\left( \left\{(\bT_2 \times \bT_1 p, \bT_2 \times \bT_1 q): \bT_2 \in  \cJ_{\ell, \ell'}^{\gamma, \nu}, \bT_1 \in \cT_{\ell',k} \right\} \right).
	\end{align}
	We now claim that we can further take $\bT_1$ to be a threshold channel $\bT_1
		\in \cTT_{\ell',k}$ and $\bT_2$ to be an extreme point of $\cJ_{\ell,
			\ell'}^{\gamma, \nu}$.
	This claim follows if we can write an arbitrary point in $\cA$ as a convex
	combination of elements of
	the set $\left\{(\bT_2 \times \bT_1 p, \bT_2 \times \bT_1 q): \bT_2 \in \ext\left(
		\cJ_{\ell, \ell'}^{\gamma, \nu} \right)
		, \bT_1 \in \cTT_{\ell',k} \right\}$.
	By equation~\Cref{eq:conv-ext}, it suffices to demonstrate this convex combination for
	all points of the form $(\bT_2 \times \bT_1 p, \bT_2 \times \bT_1 q)$, for
	some $\bT_2 \in \cJ_{\ell, \ell'}^{\gamma, \nu}$ and $\bT_1 \in \cT_{\ell',k}$.

	Let $\bH_1, \bH_2, \dots$ be extreme points of $\cJ_{\ell, \ell'}^{\gamma,
			\nu}$, and let $\bL_1, \bL_2, \dots$ be an enumeration of the threshold channels
	$\cTT_{\ell',k}$.
	By definition, any $\bT_1 \in \cJ_{\ell, \ell'}^{\gamma,
			\nu}$ can be written as $\sum_i \alpha_i \bH_i$ for some convex combination
	$\alpha_1, \alpha_2, \dots$.
	Furthermore, \Cref{thm:ext-point-comm} implies that any $(\bT_2 p ,\bT_2 q)$,
	for $\bT_2 \in \cT_{\ell',k}$, can be written as $\sum_{j} \beta_j (\bL_j p,
		\bL_j q) = (\sum_{j} \beta_j \bL_j p, \sum_{j} \beta_j \bL_j q)$, for some
	convex combination $\beta_1, \beta_2, \dots$.

	Thus, any arbitrary point $(\bT_2 \times \bT_1 p, \bT_2 \times \bT_1 q)$, for
	$\bT_2 \in \cJ_{\ell, \ell'}^{\gamma, \nu}$ and $\bT_1 \in \cT_{\ell',k}$, can
	be written as
	\begin{align*}
	\left(\bT_2 \times \bT_1 p, \bT_2 \times \bT_1 q\right) &= \left( \left( \sum_i \alpha_i \bH_i \right) \times \bT_1 p,  \left( \sum_i \alpha_i \bH_i \right)\times \bT_1 q\right) \\
	&= \sum_i \alpha_i \left(   \bH_i \times \bT_1 p,  \bH_i \times \bT_1 q\right) \\
	&= \sum_i \alpha_i \left(   \bH_i \left( \bT_1 p \right),  \bH_i  \left( \bT_1 q \right)\right) \\
	&= \sum_i \alpha_i \left(   \bH_i \left( \sum_{j} \beta_j \bL_j p \right),  \bH_i \left( \sum_{j} \beta_j \bL_j q \right)\right) \\
	&= \sum_i \alpha_i \left(   \sum_{j} \beta_j \bH_i \times  \bL_j p ,  \sum_{j} \beta_j \bH_i \times \bL_j q \right) \\
	&= \sum_i\sum_j \alpha_i  \beta_j\left(    \bH_i \times  \bL_j p ,  \bH_i \times \bL_j q \right).
	\end{align*}
	Finally, note that $\{\alpha_ i \beta_j\}$ are also valid convex combinations of
	$(\bH_i \times \bL_j p , \bH_i \times \bL_j q)$, since they are nonnegative and
	sum to $1$.
\end{proof}

\begin{remark}[Extending \Cref{thm:OptStructGeneral,thm:ext-point-priv-comm} to a more general set of constraints]
	\label{rem:extending-channels}
	We note that \Cref{thm:OptStructGeneral} can be extended to an arbitrary convex
	set of channels $\cC$ that satisfy (appropriately modified versions of)
	\Cref{def:mass_transfer} and equation~\Cref{eq:composition-pre-processing}.
	(Also see \Cref{rem:broad-tight}.)
\end{remark}
\subsection{Application to Hypothesis Testing}
\label{SecLDPCommTesting}

In \Cref{sec:hypothesis_testing}, we showed that the minimax-optimal sample complexity
can be obtained by a communication-efficient and efficiently computable channel, up to logarithmic factors.
However, for a particular $(p,q)$, these guarantees can be significantly
improved.
For example, consider the extreme case when $p$ and $q$ are the following two
distributions on $[k]$: for $\gamma$ small enough, 
\begin{align*}
p & = [\alpha, 1 - \alpha - (k-2)\gamma, \gamma, \gamma, \dots, \gamma], \\
q & = [\beta, 1 - \beta- (k-2)\gamma, \gamma, \gamma, \dots, \gamma].
\end{align*}
Let $\bT'$ be a deterministic binary channel that maps the first and second elements to different elements, while assigning the remaining elements
arbitrarily.
Now consider the following private channel $\bT$: the channel $\bT'$, followed
by the randomized response over binary distributions.
Then as $\gamma \to 0$, the performance of $\bT$ mirrors equation~\Cref{eq:sample-comp-binary}, which is much better than the minimax bound of
equation~\Cref{eq:worst-case-samp-comp}.
Thus, there is a wide gap between instance-optimal and minimax-optimal
performance. 
We thus consider the computational question of optimizing a quasi-convex function
$g(\bT p, \bT q)$
over all possible $\epsilon$-private channels that map to a domain of size
$\ell $.

The following result proves \Cref{cor:alg-priv-quasi-cvx} for $\cC$ equal to
$\cP^{\epsilon}_{\ell, k}$:
\begin{restatable}[Computationally efficient algorithms for maximizing quasi-convex functions under privacy constraints]{corollary}{CorOptimizationPolySecond}
	\label{cor:alg-priv-quasi-cvx-second}
	Let $p$ and $q$ be fixed distributions over $[k]$, let
	$\cC $ be the set of channels $\cJ_{\ell,k}^{\gamma, \nu}$ from
	\Cref{def:family-of-channels}, and let $\cA = \{(\bT p, \bT q): \bT \in \cC\}$.
	Let $g: \cA \to \R$ be a jointly quasi-convex function.
	Then there is an algorithm that solves
	$\max_{\bT \in \cC} g(\bT p ,\bT q)$
	in	time polynomial in $k^{\ell^2}$ and $2^{O(\ell^3 \log \ell)}$.\footnote{Recall that $g$ is assumed to be permutation invariant. If not, an extra factor of $\ell!$ will appear in the time complexity.}
\end{restatable}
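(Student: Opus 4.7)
The plan is to combine the structural theorem \Cref{thm:OptStructGeneral} (the generalization of \Cref{thm:ext-point-priv-comm} to the LP family $\cJ_{\ell,k}^{\gamma,\nu}$) with a brute-force enumeration argument. Because $\cA$ is a convex, compact set and $g$ is quasi-convex, any point of $\cA$ can be written as a convex combination of elements of $\ext(\cA)$, and iteratively applying the quasi-convex inequality $g(\lambda x + (1-\lambda)y) \le \max(g(x), g(y))$ yields
\begin{align*}
\max_{\bT \in \cC} g(\bT p, \bT q) \;=\; \max_{(y_1,y_2) \in \ext(\cA)} g(y_1,y_2).
\end{align*}
By \Cref{thm:OptStructGeneral}, every extreme point of $\cA$ has the form $(\bT_2 \bT_1 p,\, \bT_2 \bT_1 q)$ for some threshold channel $\bT_1 \in \cTT_{2\ell^2,k}$ and some extreme point $\bT_2$ of $\cJ_{\ell,2\ell^2}^{\gamma,\nu}$. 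The algorithm is then the obvious one: enumerate all such pairs $(\bT_1, \bT_2)$, evaluate $g(\bT_2 \bT_1 p,\, \bT_2 \bT_1 q)$ for each, and return the pair attaining the maximum.

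It remains to bound the cost of the two enumerations. First, after sorting $[k]$ by likelihood ratio $p_i/q_i$, every threshold channel in $\cTT_{2\ell^2,k}$ is determined (up to relabelling of outputs) by a partition of $[k]$ into at most $2\ell^2$ contiguous blocks; there are at most $\binom{k+2\ell^2-1}{2\ell^2-1} = O(k^{2\ell^2})$ such partitions, which is polynomial in $k^{\ell^2}$. Since $g$ is assumed permutation-invariant in its two output arguments, we may fix a canonical assignment of labels to blocks; dropping this assumption would only insert a harmless $(2\ell^2)!$ factor. Second, the polytope $\cJ_{\ell,2\ell^2}^{\gamma,\nu}$ lives in $\R^{\ell \cdot 2\ell^2}$ and, after eliminating the $2\ell^2$ column-stochasticity equalities, has effective dimension $n = O(\ell^3)$. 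It is cut out by the $\ell \cdot (2\ell^2)^2 = O(\ell^5)$ LP inequalities of \Cref{def:family-of-channels} together with the $O(\ell^3)$ non-negativity constraints, giving $m = O(\ell^5)$. By \Cref{fact:vertex-enumeration}, all vertices can be listed in time $e^{O(n \log m)} = 2^{O(\ell^3 \log \ell)}$.

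Multiplying the two enumeration costs, together with the polynomial time to form the matrix products $\bT_2 \bT_1$ and evaluate $g$ at each candidate, yields the stated bound of $\poly(k^{\ell^2},\, 2^{\ell^3 \log \ell})$. I would also invoke \Cref{cl:composition} at the end to verify that each composition $\bT_2 \times \bT_1$ we enumerate genuinely lies in $\cC$, so that the maximum we report is feasible.

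The main obstacles are routine but worth checking carefully: (i) confirming the correct dimension/constraint counts for the vertex enumeration so that $n \log m = O(\ell^3 \log \ell)$ rather than something larger; and (ii) verifying that the permutation-invariance reduction when counting threshold channels is legitimate, i.e.\ that permuting the output labels of $\bT_1$ can be absorbed into a corresponding column permutation of $\bT_2$ without leaving $\ext(\cJ_{\ell,2\ell^2}^{\gamma,\nu})$, which follows because $\cJ_{\ell,2\ell^2}^{\gamma,\nu}$ is symmetric under column permutation. The real content of the corollary has already been packaged into \Cref{thm:OptStructGeneral}, so the proof itself is essentially this bookkeeping plus a call to \Cref{fact:vertex-enumeration}.
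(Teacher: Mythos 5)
Your proposal is correct and follows the same route as the paper: invoke \Cref{thm:OptStructGeneral} to reduce to enumerating pairs $(\bT_1,\bT_2)$ with $\bT_1$ a threshold channel and $\bT_2$ a vertex of $\cJ_{\ell,2\ell^2}^{\gamma,\nu}$, bound the number of threshold channels by roughly $k^{2\ell^2}$, and use \Cref{fact:vertex-enumeration} on a polytope of dimension $O(\ell^3)$ with $\poly(\ell)$ constraints to get the $2^{O(\ell^3\log\ell)}$ factor. Your added remarks (feasibility via \Cref{cl:composition}, and the permutation-invariance bookkeeping) are sound and only make explicit what the paper leaves implicit.
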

\begin{proof}
	The algorithm is as follows: we try all threshold channels in $\bT_1 \in
		\cTT_{\ell',k}$ and all extreme points of $\cJ^{\gamma,\nu}_{\ell,\ell'}$, and output the
	channel $\bT = \bT_2 \times \bT_1$ that attains the maximum value of $g (\bT p
		, \bT q)$.
	By \Cref{thm:OptStructGeneral} and quasi-convexity of $g$, we know the algorithm will output a correct
	value, since all extreme points are of this form.
	Thus, we focus on bounding the runtime.
	We know that the cardinality of $ \cTT_{\ell ',k}$ is bounded by $k^{\ell '}$
	(up to a rotation of output rows).
	By \Cref{fact:vertex-enumeration}, the time taken to iterate through all the
	extreme points of $\epsilon$-LDP channels from $[\ell']$ to $[\ell]$ is at most
	polynomial in $2^{\ell^3 \log \ell}$, since $\cJ_{\ell,\ell'}$ is a polytope
	in $\R^{2\ell^3}$ with $\poly(\ell)$ inequalities.
	This completes the proof.
\end{proof}

The proof of \Cref{cor:alg-priv-sbht} is immediate from \Cref{fact:sample-complexity},
\Cref{cor:alg-priv-quasi-cvx}, and \Cref{lem:comm-privacy-effect}, stated later.

\section{Extensions to Other Notions of Privacy} %
\label{sec:extensions_to_other_privacy}

In this section, we explore computational and statistical aspects of hypothesis testing under other notions of privacy.
	\Cref{sec:approximate_local_privacy} is on approximate privacy, in which we first focus on $(\epsilon, \delta	)$-LDP and then our proposed definition of approximate privacy.
	Next, we focus on binary communication constraints for R\'{e}nyi
	differential privacy in \Cref{sec:other-priv-renyi}.
	This will be possible since our algorithmic and structural results were not
	restricted to the case of pure LDP.

We begin by noting that communication constraints have a benign
	effect on the sample complexity of hypothesis testing for many notions of privacy:

\begin{condition}[Closure under post-processing]
	\label{cond:l-postprocessing}
	Let $ k \in \N$. For each $r \in \N$, consider sets $\cC_r \subseteq \cT_{r, k}$ and define $\cC = \cup_{r \in \N} \cC_r$.
	We say $\cC$ \emph{satisfies $\ell$-post-processing} if for every $r \in \N$, if $\bT
		\in \cC_r$ and $\bH$ is a deterministic channel from $[r]$ to $[\ell] $,
	the channel $\bH \times \bT$ also belongs to $\cC_\ell$, and thus to $\cC$.
\end{condition}
Post-processing is satisfied by various notions of privacy: $\epsilon$-pure
privacy,
$(\epsilon,\delta)$-approximate privacy (see Dwork and Roth~\cite[Proposition 2.1]{DworkRoth13}), and R\'{e}nyi privacy~\cite{Mir17}.
For a set of channels $\cC$,
we use the notation $\nstar(p,q,\cC)$ to denote the sample complexity of
hypothesis testing under channel constraints of $\cC$ in \Cref{def:shtgeneric}.
The following result shows that even with binary communication constraints, the
sample complexity increases by at most a logarithmic factor:

\begin{proposition}[Benign effect of communication constraints on sample complexity under closure]
	\label{lem:comm-privacy-effect}
	Let $p$ and $q$ be any two distributions on $[k]$.
	Let $\cC$ be a set of channels that satisfy $\ell$-post-processing
	(\Cref{cond:l-postprocessing}) for some $\ell > 1$.
	Let $\cC_\ell$ denote the subset of channels in $\cC$ that map to a domain of
	size $\ell$.
	Then
	\begin{align}
    \nstar(p,q,\cC_\ell) \lesssim \nstar(p,q,\cC) \cdot \left(1 + \frac{\log \left(\nstar(p,q,\cC)\right)}{\ell}\right).  
\end{align}
\end{proposition}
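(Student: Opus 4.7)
The plan is to construct a channel in $\cC_\ell$ by composing an optimal channel $\bT^* \in \cC$ with a deterministic alphabet-reducing post-processing, and to use \Cref{fact:comm-constraints} to bound the resulting Hellinger contraction.

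First, let $N := \nstar(p,q,\cC)$. By \Cref{fact:id-channel}, there exists $\bT^* \in \cC$ (which we may take identical across users) with $\hel^2(\bT^* p, \bT^* q) \asymp 1/N$. Setting $p' := \bT^* p$ and $q' := \bT^* q$, I apply \Cref{fact:comm-constraints} to the pair $(p', q')$ to obtain a deterministic threshold channel $\bH$ mapping into $[\ell]$ satisfying
\[
\hel^2(\bH p', \bH q') \gtrsim \frac{\hel^2(p', q')}{1 + \log(1/\hel^2(p', q'))/\ell} \asymp \frac{1/N}{1 + (\log N)/\ell}.
\]
Since $\bH$ is deterministic and $\bT^* \in \cC$, \Cref{cond:l-postprocessing} implies $\bH \times \bT^* \in \cC_\ell$. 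Applying \Cref{fact:id-channel} once more to this composed channel yields
\[
\nstar(p, q, \cC_\ell) \lesssim \frac{1}{\hel^2(\bH p', \bH q')} \lesssim N \cdot \left(1 + \frac{\log N}{\ell}\right),
\]
which is the desired bound.

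The main subtlety is that the channel $\bH$ provided by \Cref{fact:comm-constraints} must be \emph{deterministic} in order to invoke the post-processing closure in \Cref{cond:l-postprocessing}; this is guaranteed because the constructions of Bhatt, Nazer, Ordentlich, and Polyanskiy and of Pensia, Jog, and Loh cited in \Cref{fact:comm-constraints} realize $\bH$ as a deterministic threshold channel on the likelihood ratios of $p'$ and $q'$. A minor technical point is that $\bT^*$ may map into a (countably) infinite output alphabet, but this can be handled by a standard truncation to a finite alphabet with arbitrarily small loss in Hellinger divergence before applying \Cref{fact:comm-constraints}.
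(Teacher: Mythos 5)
Your proof is correct and follows essentially the same route as the paper's: take the Hellinger-optimal channel $\bT^*\in\cC$, compress its output to $[\ell]$ via the channel from \Cref{fact:comm-constraints}, invoke \Cref{cond:l-postprocessing} to place the composition in $\cC_\ell$, and convert back to sample complexity via \Cref{fact:id-channel}. Your two added remarks (that the compressing channel is deterministic, and how to handle an unattained supremum / infinite output alphabet) address exactly the subtleties the paper relegates to a footnote, so there is nothing missing.
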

\begin{proof}
	Let $\bT$ be the optimal channel in $\cC$ that maximizes $\hel^2(\bT p, \bT
		q)$.
	Let $k'$ be the size of the range of $\bT$.
	By \Cref{fact:id-channel}, we have $\nstar(p,q,\cC)\asymp 1/{\hel^2(\bT p,
		\bT q)}$.
	By \Cref{fact:comm-constraints}, we know that there exists $\bT' \in
		\cT_{\ell,k'}$ 
  such that\footnote{If the supremum is not attained, the proof can be modified by considering a suitable sequence of channels and applying a similar argument.}
	\begin{align}
    \hel^2(\bT p, \bT q)    \lesssim  \hel^2(\bT' ( \bT p), \bT' (\bT q))\cdot 
     \left(1 + \frac{\log(1/\hel^2(\bT p,
     \bT q))}{\ell}\right) .
    \end{align}
	By the assumed closure of $\cC$ under post-processing,
	the channel $\bT'\times\bT $ belongs to $\cC$. Thus, the channel $\bT'\times\bT $
	also belongs to $\cC_\ell$, since its output is of size $\ell$.
	This implies that the sample complexity $\nstar(p,q,\cC_\ell)$
	is at most $1/\hel^2(\bT'\times\bT p, \bT'\times\bT q)$.
	Using the fact that $\nstar(p,q,\cC)\asymp 1/{\hel^2(\bT p, \bT q)}$, we obtain the
	desired result.
\end{proof}

Thus, in the rest of this section, our main focus will be on the setting of
binary channels.

\subsection{Approximate Local Privacy} 
\label{sec:approximate_local_privacy}
In this section, we first focus on $(\epsilon,\delta)$-approximate LDP (\Cref{def:ldp-approx}).
We begin by showing upper bounds on the associated sample complexity.
On the computational front, we present efficient algorithms for the case of binary
constraints and then propose a relaxation for the case of larger output
domains.

We first recall the definition of $(\epsilon,\delta)$-LDP:
\begin{definition}[($\epsilon, \delta$)-LDP]
	\label{def:ldp-approx}
	We say a channel from $\cX$ to $\cY$ is ($\epsilon, \delta$)-\emph{LDP} if for all $S
		\subseteq
		\cY$, we have
	\begin{align}\label{epsilon delta LDP inequality}
		\sup_{x,x' \in \cX}  \P [\bT(x) \in S)]  -
		e^{\epsilon} \cdot \P [\bT(x) \in S)] - \delta \leq 0.
	\end{align}
\end{definition}
What makes the analysis of $(\epsilon,\delta)$-LDP different from
$\epsilon$-LDP is that when $|\mathcal{Y}|>2$, the condition in inequality~\eqref{epsilon
	delta LDP inequality} should be verified for \emph{all} sets $S\subseteq \mathcal{Y}$, not just singleton sets ($|S|=1$).
Only when $|\mathcal{Y}|=2$ is it enough to consider singleton sets $S$.

Let $\nstar(p,q, \left( \epsilon,\delta \right))$ denote the sample complexity
for the setting in \Cref{def:sht-ldp}, with $\cC$ equal to the set of all
$(\epsilon, \delta)$-LDP channels.
We directly obtain the following upper bound on the sample complexity, proved in \Cref{app:other_notions_of_privacy}, which
happens to be tight for the case of binary distributions:
\begin{restatable}[Sample complexity of approximate LDP]{claim}{LemSampCompApproxLDP}
	For all $\delta \in (0,1)$, we have
	\begin{align*}
\nstar(p,q, \left( \epsilon,\delta \right)) \lesssim \min\left(\nstar(p,q, \epsilon) \cdot \frac{1}{1 - \delta}\,\,, \nstar(p,q) \cdot \frac{1}{\delta}  \right).
\end{align*}
	Moreover, this is tight (up to constant factors) when both $p$ and $q$ are
	binary distributions.
\end{restatable}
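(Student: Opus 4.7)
The two upper bounds come from two different channel constructions, so my plan is to exhibit one $(\epsilon,\delta)$-LDP channel for each of the two terms in the minimum.

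For the bound $\nstar(p,q)/\delta$, I would use the ``random-opt-in'' channel $\bT$ that, independently for each user, reveals $X_i$ with probability $\delta$ and outputs a fixed symbol $\perp \notin \cX$ with probability $1-\delta$. To check $(\epsilon,\delta)$-LDP: for any $S \subseteq \cY$ and $x,x'\in\cX$,
\begin{align*}
\P[\bT(x)\in S] - e^\epsilon \P[\bT(x')\in S] &= \delta(\1\{x\in S\} - e^\epsilon \1\{x'\in S\}) + (1-\delta)(1-e^\epsilon)\1\{\perp \in S\} \\
&\leq \delta.
\end{align*}
Conditioned on receiving a non-$\perp$ message (probability $\delta$ per user), we have a clean sample from the true distribution; by a Chernoff bound, $n \asymp \nstar(p,q)/\delta$ users produce at least $\nstar(p,q)$ clean samples with probability $\geq 0.95$, and the non-private optimal test on those samples solves the problem. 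For the bound $\nstar(p,q,\epsilon)/(1-\delta)$, the key observation is that any $\epsilon$-LDP channel is automatically $(\epsilon,\delta)$-LDP, so $\nstar(p,q,(\epsilon,\delta)) \leq \nstar(p,q,\epsilon) \leq \nstar(p,q,\epsilon)/(1-\delta)$; this term of the minimum is stated for a uniform-looking expression but is essentially free.

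For tightness when $p$ and $q$ are binary distributions, I would argue as in the proof of \Cref{prop:SDPI-binary}: since the Hellinger divergence is convex on the convex polytope of $(\epsilon,\delta)$-LDP channels from $\{0,1\}$ to $\{0,1\}$, its maximum on this polytope is attained at an extreme point, and by \Cref{fact:id-channel} the sample complexity is $\asymp 1/\max_\bT \hel^2(\bT p,\bT q)$. Parameterize a binary channel by $a=\bT(0,0)$ and $b=\bT(0,1)$; the constraints $a \leq e^\epsilon b + \delta$, $b \leq e^\epsilon a + \delta$, $1-a \leq e^\epsilon(1-b)+\delta$, $1-b \leq e^\epsilon(1-a)+\delta$ carve out a polytope in $[0,1]^2$, whose finitely many extreme points can be enumerated explicitly. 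Two families of extreme points dominate: one with $a=1$ (or $b=0$) saturating the $\delta$-slack (``randomized response with an identity component''), giving $a-b \asymp \delta + (1-\delta)(e^\epsilon-1)/(e^\epsilon+1)$; and the standard randomized-response point with $\delta$-slack unused. Computing $\hel^2(\bT p, \bT q)$ via \Cref{claim:Hellinger-binary-taylor} at each extreme point and taking the maximum yields
\[
\max_{\bT} \hel^2(\bT p, \bT q) \asymp \max\bigl( \delta\cdot \hel^2(p,q),\ (1-\delta)\cdot \hel^2_{\epsilon\text{-LDP}}(p,q) \bigr),
\]
where $\hel^2_{\epsilon\text{-LDP}}$ is the quantity from \Cref{prop:SDPI-binary}. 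Inverting gives the matching lower bound.

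The main obstacle will be the extreme-point enumeration in the second part: while the pure $\epsilon$-LDP binary polytope has (essentially) a unique non-trivial extreme point (cf.\ \Cref{fact:extreme-pt-special-case}), the $(\epsilon,\delta)$-polytope is richer, with extreme points interpolating between randomized response (when the $\delta$-constraints are slack) and the identity channel (when the $\delta$-constraints dominate). Correctly identifying which extreme point maximizes the Hellinger divergence in each regime of $(\epsilon,\delta,p,q)$, and then verifying that the resulting bound matches each of the two terms of the minimum in the stated upper bound, will be the technical heart of the proof. The computation itself is a calculation in the spirit of the proof of \Cref{prop:SDPI-binary}, but must be done separately in the regimes $\delta \lesssim (1-\delta)(e^\epsilon-1)/(e^\epsilon+1)$ and its complement.
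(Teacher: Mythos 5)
Your upper-bound argument is correct and genuinely different from the paper's. The paper constructs a \emph{single} $(\epsilon,\delta)$-LDP channel $\bT'$ from $[k]$ to $[2k]$: with probability $1-\delta$ apply the Hellinger-optimal $\epsilon$-LDP channel $\bT$, and with probability $\delta$ output the raw symbol on a disjoint alphabet. This gives $\hel^2(\bT'p,\bT'q) = (1-\delta)\hel^2(\bT p,\bT q) + \delta \hel^2(p,q)$, and the min in the bound falls out directly from the $\asymp \max$ of the two terms. You instead split into two constructions: a ``random-opt-in'' channel (reveal with probability $\delta$, else emit $\perp$) which is indeed $(\epsilon,\delta)$-LDP and gives $\hel^2(\bT p,\bT q) = \delta \hel^2(p,q)$, hence $\nstar(p,q)/\delta$; and the observation that the $\epsilon$-LDP polytope sits inside the $(\epsilon,\delta)$-LDP one, giving $\nstar(p,q,\epsilon)$. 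Taking the min yields a bound that is in fact slightly stronger than stated (no $1/(1-\delta)$ factor, though the two are equivalent up to constants for all $\delta\in(0,1)$). Your route is more elementary and decouples the two terms cleanly; the paper's single channel is tidier for the tightness citation, since it is exactly the channel that the cited optimality result identifies.

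For the tightness claim your plan is a sketch with two genuine gaps. First, you maximize $\hel^2$ only over $(\epsilon,\delta)$-LDP channels from $\{0,1\}$ to $\{0,1\}$, but tightness is a lower bound against \emph{all} $(\epsilon,\delta)$-LDP channels with arbitrary output alphabet. For pure LDP the paper's \Cref{fact:output-sze-same} (via Kairouz--Oh--Viswanath) justifies restricting to $\ell = k$, but there is no analogous fact stated for $(\epsilon,\delta)$-LDP, and the paper even warns in \Cref{sec:approximate_local_privacy} that the constraint for $|\cY|>2$ involves all subsets $S$, not just singletons. You would need to argue (e.g., via \Cref{fact:comm-constraints} plus closure of $(\epsilon,\delta)$-LDP under post-processing) that a binary-output channel loses at most a constant factor in Hellinger, and this step is absent. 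Second, you correctly identify the vertex enumeration and regime-by-regime Hellinger maximization as the technical heart, but you only name the vertices without doing the computation that shows $\max_\bT \hel^2(\bT p,\bT q) \asymp \max\bigl(\delta \hel^2(p,q), (1-\delta)\hel^2_{\epsilon\text{-LDP}}(p,q)\bigr)$. The paper does not carry this out either — it cites Kairouz, Oh, and Viswanath (Theorem 18), which shows that the combined channel $\bT'$ is exactly optimal for binary distributions — so your self-contained route, if completed, would be a real contribution, but as written it stops at the hard part.
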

In the rest of this section, we focus on efficient algorithms in the presence
of both privacy and communication constraints.

Turning to computationally efficient algorithms for the case of privacy and
communication constraints,
we present two kinds of results:
exact results for the case of binary outputs, and sharp relaxations for the case
of multiple outputs.
\paragraph{Binary channels:}
Let $\cC$ be the set of all $(\epsilon, \delta)$-approximate LDP channels
from $[k]$ to $[2]$, i.e., binary channels.
Let $\gamma = (e^\epsilon, e^\epsilon	)$ and $\nu = (\delta, \delta )$.
Observe that $\cC$ is then equal to $\cJ_{2,k}^{\gamma,\nu}$, defined in
\Cref{def:family-of-channels}.
Thus, \Cref{thm:OptStructGeneral,cor:alg-priv-quasi-cvx-second} hold in this
case, as well.

\paragraph{Channels with larger output spaces:}
Here,
we define a new notion of privacy that relaxes $(\epsilon, \delta)$-LDP. It is enough to verify whether the privacy condition holds for singleton events $S$:
\begin{definition}[($\epsilon,\delta$)-SLDP]
	\label{def:sldp}
	We say a channel $\cX$ to $\cY$ is ($\epsilon, \delta$)-\emph{singleton-based-LDP ($(\epsilon, \delta)$-SLDP)}
 if for all $S \subseteq
		\cY$, we have
	\begin{align*}
		\sup_{x,x' \in \cX}  \P [\bT(x) \in S)]  -
		e^{\epsilon} \cdot \P [\bT(x) \in S)] - \delta\cdot |S| \leq 0.
	\end{align*}
\end{definition}
The following result shows that $(\epsilon, \delta)$-SLDP is a good
approximation to
$(\epsilon,\delta)$-LDP when the output space is small:
\begin{claim}[Relations between LDP and SLDP] Consider a channel $\bT$ from $\cX$ to $[\ell]$.
	\begin{enumerate}
		\item
		      If $\bT$ is $(\epsilon,\delta)$-SLDP, it is
		      $(\epsilon, \ell  \delta)$-LDP.
		\item
		      If $\bT$ is $(\epsilon,\delta)$-LDP, it is
		      $(\epsilon,
			      \delta )$-SLDP. %

	\end{enumerate}
\end{claim}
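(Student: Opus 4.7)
Both parts follow essentially directly from the definitions, so my plan is simply to unwind the inequalities defining $(\epsilon,\delta)$-LDP and $(\epsilon,\delta)$-SLDP and compare them on an arbitrary set $S \subseteq [\ell]$. No auxiliary lemmata or constructions are needed; the only fact I will use beyond the definitions is that if $\bT$ maps into $[\ell]$, then every subset $S$ of the output space satisfies $|S| \leq \ell$.

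For part (1), I would fix an arbitrary $S \subseteq [\ell]$. The $(\epsilon,\delta)$-SLDP hypothesis applied to $S$ gives
\begin{equation*}
\sup_{x,x' \in \cX} \P[\bT(x) \in S] - e^\epsilon \P[\bT(x') \in S] \;\leq\; \delta \cdot |S|.
\end{equation*}
Since $S \subseteq [\ell]$ implies $|S| \leq \ell$, the right-hand side is at most $\ell \delta$, which is exactly the $(\epsilon, \ell\delta)$-LDP condition. Taking the supremum over $S$ finishes this direction.

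For part (2), I would fix an arbitrary $S \subseteq [\ell]$ and apply the $(\epsilon,\delta)$-LDP hypothesis to that $S$, yielding
\begin{equation*}
\sup_{x,x' \in \cX} \P[\bT(x) \in S] - e^\epsilon \P[\bT(x') \in S] \;\leq\; \delta.
\end{equation*}
If $S = \emptyset$, both probabilities vanish and the SLDP inequality $0 \leq \delta \cdot 0$ holds trivially. Otherwise $|S| \geq 1$, so $\delta \leq \delta \cdot |S|$, and the desired SLDP bound follows by chaining the two inequalities.

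The main (and only) potential obstacle is a notational one: the definitions as stated contain a small typo (the second occurrence of $\bT(x)$ should be $\bT(x')$, as is standard), and one should verify that the intended quantification in SLDP is over all $S \subseteq \cY$ rather than only singletons; once that is fixed, the argument above is immediate, and I do not anticipate needing any technical work beyond the two short displays above.
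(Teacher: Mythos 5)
Your proof is correct and supplies exactly the straightforward argument the paper omits (the paper states ``The proof is immediate from the definitions \dots and we omit it''). Both parts are handled properly, including the edge case $S = \emptyset$ in part (2), and you correctly flag the typo in Definitions~\ref{def:ldp-approx} and \ref{def:sldp} where the second $\bT(x)$ should read $\bT(x')$.
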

The proof is immediate from the definitions of $(\epsilon, \delta)$-LDP and $(\epsilon, \delta)$-SLDP, and we omit it. We now show that it is easy to optimize over SLDP channels in the presence of
communication constraints.
For any $\ell	\in \N$,
let $\cC$ be the set of all channels from $[k]$ to $[\ell]$ that satisfy
$(\epsilon,\delta)$-SLDP.
Let $\gamma = (e^\epsilon, e^\epsilon, \dots, e^\epsilon)$
and $\nu = (\delta, \delta, \dots, \delta )$.
Observe that $\cC$ is then equal to $\cJ_{\ell, k}^{\gamma,\nu}$, defined in
\Cref{def:family-of-channels}.
Thus, \Cref{thm:OptStructGeneral,cor:alg-priv-quasi-cvx-second} imply that we
can efficiently optimize over SLDP channels.

\subsection{Other Notions of Privacy}
\label{sec:other-priv-renyi}
We briefly note that our computationally efficient algorithms hold for a wider
family of channels defined in \Cref{def:family-of-channels}; see also
\Cref{rem:extending-channels}.

Finally, we consider the case of R\'{e}nyi differential privacy introduced in Mironov~\cite{Mir17}: 
\begin{definition}[$(\epsilon, \alpha)$-R\'{e}nyi differential privacy] Let $\epsilon \in \real_+$ and $\alpha > 1$, and let $\cX$ and $\cY$ be two domains. A channel $\bT: \cX \to \cY$ satisfies $(\epsilon, \alpha)$-\emph{RDP} if for all $x, x' \in \cX$, we have
$$D_\alpha(\bT(x)\|\bT(x')) \le \epsilon,$$
where $D_\alpha(p \| q)$ is the R\'{e}nyi divergence of order $\alpha$ between two distributions $p$ and $q$ on the same probability space, defined as
$$D_\alpha(p \| q) := \frac{1}{\alpha-1} \log \E_{X \sim q}\left[ \left(\frac{p(X)}{q(X)}\right)^\alpha\right].$$ 
\end{definition}
R\'{e}nyi divergence is also defined for $\alpha = 1$ and $\alpha = \infty$ by taking limits. When $\alpha = 1$, the limit yields the Kullback--Leibler divergence, and when $\alpha = \infty$, it leads to the supremum of the log-likelihood ratio between $p$ and $q$. In fact, $(\infty, \epsilon)$-RDP is identical to $\epsilon$-LDP. Similarly, $(1, \epsilon)$-RDP is closely related to mutual information-based privacy~\cite{CuffYu16}, since the corresponding channel $\bT$ has Shannon capacity at most $\epsilon$.
\begin{proposition}[R\'{e}nyi differential privacy and binary constraints]
	Let $\epsilon > 0$ and $\alpha >1$. Let $\cC$ be the set of $(\epsilon, \alpha)$-RDP channels from $[k]$ to $[2]$.
	Let $p$ and $q$ be two distributions on $[k]$, and define $\cA:= \{(\bT p ,\bT
		q): \bT \in \cC\}$.
	If $(\bT p, \bT q)$ is an extreme point of $\cA$ for $\bT \in \cC$,
	then $\bT$ can be written as $\bT_1 \times \bT_2$, where $\bT_1$ is an extreme point of the set of 
	$(\epsilon, \alpha)$-RDP channels from $[2]$ to $[2]$, and $\bT_2$ is a binary
	threshold channel from $[k]$.
\end{proposition}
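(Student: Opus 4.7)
The plan is to follow the template of \Cref{thm:ext-point-priv-comm} but replace the LP mass-transfer arguments (which exploited the polyhedral structure of $\epsilon$-LDP) with direct perturbations tuned to the smooth RDP constraint set. To begin, one observes that $\cC$ is convex: for $\alpha \ge 1$, the R\'enyi divergence $D_\alpha(\cdot\|\cdot)$ is jointly quasi-convex, so each pairwise constraint $D_\alpha(\bT(\cdot, i)\|\bT(\cdot, j)) \le \epsilon$ cuts out a convex subset of channels, and $\cC$ is their intersection. Hence $\cA$ is convex as the linear image of $\cC$. As in the paper's other joint-range proofs, I would assume without loss of generality that the likelihood ratios $p_i/q_i$ are distinct and strictly increasing in $i$. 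Since $\bT$ has binary output, it is parameterized by its top row $t = (t_1,\ldots,t_k) \in [0,1]^k$, and the image map becomes $L(t) = \bigl(\sum_i t_i p_i,\, \sum_i t_i q_i\bigr)$.

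The key analytic tool I would establish is a \emph{range-preservation} principle: for each $v \in (0,1)$, the function $u \mapsto D_\alpha(\Ber(u)\|\Ber(v))$ has a unique minimum at $u=v$ and is strictly monotone on each side (a direct one-variable derivative calculation), and likewise in the second argument. Consequently, on $[m,M]^2$ the maximum of $D_\alpha(\Ber(u)\|\Ber(v))$ is attained at a corner $(m,M)$ or $(M,m)$, so any perturbation of $t$ whose entries all remain in $[\min_i t_i,\, \max_i t_i]$ preserves membership in $\cC$ whenever $t$ did.

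Using this, I would first show that at any extreme point of $\cA$, $t$ takes at most two distinct values. If $t$ takes three values $a < b < c$ and $t_{i^*} = b$, set $\delta^\pm = \pm \eta\, e_{i^*}$ for small $\eta > 0$: range-preservation gives $t \pm \delta \in \cC$, by construction $L(\delta^+) + L(\delta^-) = 0$, and $L(\delta^+) = \eta(p_{i^*}, q_{i^*}) \ne 0$, exhibiting $L(t)$ as a strict midpoint of two distinct elements of $\cA$. Next, if $t$ takes exactly two values $m<M$ but the assignment is not a threshold, the ordering yields a ``valley'' triple $i_1<i_2<i_3$ with $(t_{i_1},t_{i_2},t_{i_3})=(M,m,M)$ (the peak case is symmetric). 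Define $\delta^+$ supported on $\{i_1,i_2\}$ by $(\delta^+_{i_1},\delta^+_{i_2}) = (-\eta \alpha_1,\, \eta \beta_1)$, and $\delta^-$ supported on $\{i_2, i_3\}$ by $(\delta^-_{i_2}, \delta^-_{i_3}) = (\eta \beta_2,\, -\eta\alpha_2)$. Requiring $L(\delta^+) + L(\delta^-) = 0$ reduces to a $2\times 3$ linear system in $(\alpha_1,\beta_1+\beta_2,\alpha_2)$; the sign pattern forced by $p_{i_1}/q_{i_1} < p_{i_2}/q_{i_2} < p_{i_3}/q_{i_3}$ yields an all-positive solution via Cramer's rule, and $L(\delta^+) \ne 0$ follows from non-collinearity of $(p_{i_1},q_{i_1})$ and $(p_{i_2},q_{i_2})$. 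For small $\eta$, both $t + \delta^\pm$ have entries in $[m,M]$, hence lie in $\cC$, again forcing $L(t)$ to be a non-extreme midpoint.

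Combining these two claims, $\bT$ has two distinct columns arranged by a threshold on $p_i/q_i$, which yields the factorization $\bT = \bT_1 \times \bT_2$ with $\bT_2 : [k] \to [2]$ the binary threshold channel and $\bT_1 : [2] \to [2]$ the $2 \times 2$ stochastic matrix formed by the two distinct columns of $\bT$. Pairwise RDP on $\bT$ restricts immediately to RDP on the two columns of $\bT_1$, so $\bT_1$ is $(\epsilon,\alpha)$-RDP. If $\bT_1$ is not already an extreme point of the set of $(\epsilon,\alpha)$-RDP channels from $[2]$ to $[2]$, I would write $\bT_1 = \sum_j \lambda_j \bT_1^{(j)}$ as a convex combination of extreme points; then $(\bT p, \bT q) = \sum_j \lambda_j \bigl(\bT_1^{(j)} \times \bT_2\, p,\, \bT_1^{(j)} \times \bT_2\, q\bigr)$, and extremality of $(\bT p, \bT q)$ forces every summand to equal $(\bT p, \bT q)$, so we may replace $\bT_1$ by any such $\bT_1^{(j)}$. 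The main obstacle is the loss of the polyhedral structure that underlay the LDP proof, which is overcome by the corner-maximization property of $D_\alpha(\Ber(\cdot)\|\Ber(\cdot))$ described above.
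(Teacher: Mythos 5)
Your proposal is correct and follows essentially the same route as the paper: both hinge on the unimodality of $D_\alpha(\Ber(u)\|\Ber(v))$ in each argument (so the constraint is determined by the extreme entries of the top row), use it to rule out a third distinct column value by a local up/down perturbation, and then reduce the two-valued, non-threshold case to the perturbation argument of \Cref{thm:ext-point-comm}. The only difference is presentational — you carry out the threshold and extreme-point-of-$\bT_1$ steps explicitly and work directly with extreme points of $\cA$, whereas the paper first bounds the extreme points of $\cC$ and then cites the argument of \Cref{thm:OptStructGeneral} — so this is a matter of self-containedness rather than substance.
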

\begin{proof}
Consider two binary distributions $[x, 1-x]$ and $[y, 1-y]$, where $0 \le x, y \le 1$. The $\alpha$-R\'{e}nyi divergence between the distributions is given by 
$$D_\alpha(x \| y) := \frac{1}{\alpha-1} \log \left(x^\alpha y^{1-\alpha} + (1-x)^\alpha (1-y)^{1-\alpha}\right).$$ 
Observe that the term inside the logarithm is convex in $y$ for fixed $x$, and is minimized when $y = x$. Hence, the R\'{e}nyi divergence above, as a function of $y$, is decreasing for $y \in [0, x]$ and increasing for $y \in [x, 1]$. A similar conclusion holds for fixed $y$ and varying $x$. 

Consider a channel $\bT \in \cC$ %
given by
\begin{align*}
\bT = \begin{bmatrix}
x_1 &x_2 &\dots &x_k\\
1-x_1 &1-x_2 &\dots &1-x_k
\end{bmatrix}.
\end{align*}
Without loss of generality, assume $x_1 \le x_2 \le \dots \le x_k$ %
Suppose there is an index $j$ such that $x_1 < x_j < x_k$. Observe that $x_j \notin \{0, 1\}$. By the monotonicity property of the R\'{e}nyi divergence noted above, for any index $i$, we have
\begin{align*}
\max\left\{D_\alpha(x_j \| x_i), D_\alpha(x_i \| x_j)\right\} < \max\left\{D_\alpha(x_1 \| x_k), D_\alpha(x_k \| x_1)\right\} \le \epsilon.
\end{align*}
This means that $x_j$ can be perturbed up and down by a small enough $\delta$ such that the R\'{e}nyi divergence constraints continue to be satisfied. Such perturbations will allow $\bT$ to be written as a convex combination of two distinct matrices, so $\bT$ cannot be an extreme point of the (convex) set $\cC$. Thus, an extreme point must have only two distinct columns; i.e., it must have the form 
\begin{align*}
\bT = 
\begin{bmatrix}
x_1 &x_1 &\dots &x_1 &x_k &x_k &\dots &x_k\\
1-x_1 &1-x_1 &\dots &1-x_1 &1-x_k &1-x_k &\dots &1-x_k
\end{bmatrix}.
\end{align*}
Equivalently, any extreme point is a deterministic channel from $[k] \to [2]$ followed by an RDP-channel from $[2]\to[2]$. Since we are only concerned with extreme points that correspond to extreme points of the joint range $\cA$, an argument identical to the one in the proof of \Cref{thm:OptStructGeneral} yields that an extreme point must admit a decomposition $\bT_1 \times \bT_2$, where $\bT_2$ is a threshold channel from $[k] \to [2]$ and $\bT_1$ is an extreme point of the set of RDP channels from $[2] \to [2]$. 
\end{proof}

The above result implies that given a quasi-convex function $g:\cA \to \R$, if
we are interested in maximizing $g(\bT p, \bT q)$ over $\bT \in
	\cC$,
the optimal $\bT$ can be written as $\bT_1 \times \bT_2$, where $\bT_1$ is
a binary-input, binary-output R\'{e}nyi private channel and $\bT_2$ is a threshold
channel.
Since there are only $2k$ threshold channels, we can try all those choices of
$\bT_2$, and then try to optimize over $\bT_1$ for each of those choices.
However, each such problem is over binary inputs and binary outputs, and thus
is amenable to grid search.

\begin{remark}
In addition to the convexity of RDP channels, we also used the closure-under-pre-processing property (see \Cref{cl:composition}) and the unimodality of $D_\alpha(x\|y)$ when one of the variables is fixed and the other is varied. The above proof technique will therefore work for any set of convex channels from $[k] \to [2]$ that are closed under pre-processing, and are defined in terms of such a unimodal function. In particular, our results will continue to hold for all $f$-divergence-based private channels, defined as all $\bT$ satisfying $$D_f(\bT(x) \| \bT(x')) \le \epsilon.$$ 
Our results also hold for zero-concentrated differential privacy (z-CDP)~\cite{BunSte16}, which is a notion of privacy defined using R\'{e}nyi divergences.%

\end{remark}
\section{Conclusion} %
\label{sec:conclusion}

In this paper, we considered the sample complexity of simple binary hypothesis
testing under privacy and communication constraints.
We considered two families of problems: finding minimax-optimal bounds and
algorithms, and finding instance-optimal bounds and algorithms.

For minimax optimality, we considered the set of distributions with fixed
Hellinger divergences and total variation distances.
This is a natural family to consider, because these two metrics characterize the
sample complexity in the low- and high-privacy regimes.
Prior work did not resolve the question of sample complexity in the moderate-privacy regime; our work has addressed this gap in the literature, by establishing a sample-complexity lower bound via a
carefully constructed family of distribution pairs on the ternary alphabet.
Our results highlight a curious separation between the binary and ternary (and
larger alphabet) settings, roughly implying that the binary case is
substantially easier (i.e., has a lower sample complexity) than the general
case.

Our focus on instance optimality sets our paper apart from most prior work on
information-constrained estimation, which exclusively considered minimax
optimality.
When only privacy constraints are imposed, we established approximately
instance-optimal algorithms; i.e., for any distribution pair, we proposed a
protocol whose sample complexity is within logarithmic factors of the true
sample complexity.
Importantly, the algorithm we proposed to identify this protocol is computationally
efficient, taking time polynomial in $k$, the support size of the
distributions.
When both privacy and communication constraints are in force, we developed
instance-optimal algorithms, i.e., protocols whose sample
complexity is within constant factors of the true sample complexity.
As before, these algorithms take time polynomial in $k$, for any constant
communication constraint of size $\ell$.

Our results highlight the critical role played by threshold channels in both
communication- and privacy-constrained settings.
We showed that for any distribution pair, the channel with output size $\ell$
that maximizes the output divergence (Hellinger, Kullback--Leibler, or any
quasi-convex function in general) among all channels with fixed output size
$\ell$ must be a threshold channel.
Furthermore, optimal private channels with output size $\ell$ admit a
decomposition into a threshold channel cascaded with a private channel.
These two results underpin our algorithmic contributions.

There are many interesting open problems stemming from our work that would be
worth exploring.
We did not characterize instance-optimal sample complexity in the moderate-privacy regime; our work shows that it is not characterized in terms of the Hellinger divergence and total
variation distance, but leaves open the possibility of some other
divergence, such as the $E_\gamma$ divergence, capturing the sample complexity.
We identified a forbidden structure for optimal private channels; however,
the best algorithm from Kairouz, Oh, and Viswanath~\cite{KaiOV16} does not use this information at all.
It would be interesting to see if that algorithm could be made
more efficient by incorporating the extra structural information.
Many open questions remain for the approximate LDP setting, as well.
There is no known upper bound on the number of outputs that suffice for
optimal approximate LDP channels.
It is unknown if instance-optimal approximately private channels with
$\ell>2$ outputs admit decompositions into threshold channels cascaded with
private channels, similar to the pure LDP setting. Some early investigations~\cite{ElaJog23} suggest that the set of extreme channels for $(0, \delta)$-private channels with $\ell = 3$ admit a simple description for arbitrary $[k]$. However, even for $\ell = 4$, the structure of extreme points can be quite complex.
It would be interesting to see if optimal SLDP channels, which are efficient to
find, are nearly instance optimal for approximate LDP. Finally, we did not address whether interactive protocols (sequential or blackboard) lead to strictly better sample complexities than the non-interactive ones considered in this paper.

\section*{Acknowledgements}

We thank anonymous reviewers of the conference version of this paper (submitted to the Conference on Learning Theory, 2023) for their helpful feedback.
  \printbibliography
  \appendix

\section{Randomized Response in Low-Privacy Regime}
\label{app:rand-resp-low-priv}
In this section, we prove \Cref{lem:k-ar-rr-general,lem:reduction}, which were
used to prove \Cref{lem:privacy-free} in \Cref{sec:hypothesis_testing}.
\Cref{lem:k-ar-rr-general} is proved in \Cref{app:effect_of_randomized_response_on_light_elements} and \Cref{lem:reduction} is proved in \Cref{app:proof_of_cref_lem_reduction}.
\subsection{Proof of \Cref{lem:k-ar-rr-general}}
\label{app:effect_of_randomized_response_on_light_elements}
Recall the definitions of $A$ and $A'$ from equation~\Cref{eq:Adefinition}.
\LemkRRGeneral*
\begin{proof}
	Without loss of generality, we will assume that $ \sum_{i \in A
		} (\sqrt{q_i} - \sqrt{p_i})^2 \geq \frac{\tau}{2}$.
	Let $p' = \RR^{\epsilon,\ell } p$ and $q' = \RR^{\epsilon,\ell } q$.
	By the definition of the randomized response, each probability $x$ is mapped
	to $(1 + x(e^{\epsilon} - 1))/(k-1 + e^{\epsilon})$.
	Thus, $p'$ and $q'$ are given by
	\begin{align}
		\label{eq:q_and_q_after_transformation}
	p'_i & =
		\frac{1 + p_i(e^{\epsilon} - 1)}{(\ell -1) + e^\epsilon}, \quad \text{and} \quad
		q'_i =
		\frac{1 + q_i(e^{\epsilon} - 1)}{(\ell -1) +
			e^\epsilon}, \quad \forall i \in \ell.
	\end{align}
	Recall that $\delta_i = (p_i - q_i)/q_i \in [0,1]$. For each $i \in \ell$, we now define $\delta'_i := (p_i' - q_i')/q_i'$, which
	has the following
	expression in terms of $\delta_i$ and $q_i$:
	\begin{align}
		\label{eq:defn-after-transformation} \delta'_i & = \frac{p_i' - q_i'}{q_i'}
		=
		\frac{(e^{\epsilon} - 1) (p_i - q_i)}{1 + q_i(e^\epsilon-1)} =
		\frac{(e^{\epsilon} - 1) q_i}{1 + q_i(e^\epsilon-1)} \cdot \delta_i.
	\end{align}
	Let $r = 0.01 \min \left(e^{-\epsilon}, \frac{ \tau }{\ell }\right)$.
	We define the following subsets of the domain:
	\begin{align}
		\cE & = \{i:
		\delta_i \in (0,1] \textnormal{ and } q_i \geq e^{-\epsilon}\}\,, \\ \cE' & =
		\{i: \delta_i \in (0,1] \textnormal{ and } q_i \in (r, e^{-\epsilon})\}\,.
	\end{align}
	Observe that $\cE \cup \cE' \subseteq A$.

	Since $e^\epsilon \geq \ell $, equation~\Cref{eq:q_and_q_after_transformation} implies
	that
	$q'_i \geq \frac{1}{4}(e^{-\epsilon} + q_i)$.
	In particular, on $i \in \cE'$, we have $q'_i \geq 0.25 e^{-\epsilon}$, and on $i \in
		\cE$, we have $q_i' \geq 0.25 q_i$.

	We now apply these approximations to equation~\Cref{eq:defn-after-transformation}: we
	lower-bound the numerator by $0.5 e^{\epsilon} q_i \delta_i$ and upper-bound
	the denominator based on whether $i \in \cE$ or $i \in \cE'$.
	On $\cE'$, the denominator in equation~\Cref{eq:defn-after-transformation} is
	upper-bounded by $2$, and on $\cE$, the denominator is upper-bounded by
	$2q_ie^\epsilon$.
	This is summarized as follows: for $i \in \cE \cup \cE'$, we have
	\begin{align*}
		\delta'_i &
		\geq
		\begin{cases}
			0.1 \delta_i q_i e^\epsilon , & i \in \cE' \\ 0.1 \delta_i,
			& i \in \cE,
		\end{cases}
		\qquad \qquad q'_i \geq
		\begin{cases}
			0.25 e^{-\epsilon}
			, & i \in \cE' \\ 
			0.25 q_i, & i \in \cE
		\end{cases}.
	\end{align*}
	By definition
	of $\delta'$, it follows that $\delta_i' \in (0, 1]$ on $i \in \cE \cup \cE'$.
	Thus, the contribution from the $i^{\text{th}}$ element to $\hel^2(p',q')$ is at least a
	constant times $q'_i (\delta_i')^2$; see \Cref{cl:approx-sqRt}.
	Applying this element-wise, we obtain the following:
	\begin{align*}
		\hel^2(p',q') & \gtrsim \sum_{i \in \cE'}  q_i' (\delta'_i)^2  + \sum_{i \in \cE} q_i' (\delta'_i)^2
		\\ 
		& \gtrsim \sum_{i \in
			\cE'} e^{-\epsilon}  \left(0.1 \delta_i q_i e^{\epsilon}\right)^2 + \sum_{i
			\in \cE} q_i  \left(0.1\delta_i\right)^2
		 \\ 
&		\gtrsim e^\epsilon r \sum_{i \in \cE'} q_i \delta_i^2 + \sum_{i \in \cE} q_i
		\delta_i^2 \numberthis \label{eq:hel-decomposition}.
	\end{align*}
	Now consider the set $\cA = \{i: i \in A \textnormal{ and } q_i \geq r
		\}$, which is equal to $\cE \cup \cE'$.
	The set $\cA$ preserves the contribution to Hellinger divergence from
	comparable elements, as shown below:
	\begin{align*}
		\sum_{i \in \cA} (\sqrt{q_i} - \sqrt{p_i})^2 & =
		\sum_{i \in A} (\sqrt{q_i} - \sqrt{p_i})^2 -
		\sum_{i: i \in A, q_i \leq r} (\sqrt{q_i} - \sqrt{p_i})^2 
		 \geq \frac{\tau}{2} - 2\ell r 
		 \geq \frac{\tau}{4},
	\end{align*}
	since $r \leq \frac{\tau}{10 \ell }.
	$

	Since $\cA = \cE_1 \cup \cE_2$, one of the two terms $ \sum_{i \in
			\cE'} (\sqrt{q_i} - \sqrt{p_i})^2$ or $ \sum_{i \in \cE} (\sqrt{q_i} -
		\sqrt{p_i})^2$ must be at least $\frac{\tau}{8}$.

	Now consider the following two cases:
	\paragraph{Case 1: $ \sum_{i \in
				\cE} (\sqrt{q_i} - \sqrt{p_i})^2 \gtrsim \tau$.
	}
	In this case, we are done by inequality~\cref{eq:hel-decomposition}.
	That is,
	\begin{equation*}
	\hel^2(p',q') \gtrsim \sum_{i \in
			\cE} (\sqrt{q'_i} - \sqrt{p'_i})^2 \gtrsim \sum_{i \in
			\cE} q_i \delta_i^2 \gtrsim \sum_{i \in
			\cE} (\sqrt{q_i} - \sqrt{p_i})^2 \gtrsim \tau,
	\end{equation*}
	where we use
	\Cref{cl:approx-sqRt} element-wise.

	\paragraph{Case 2: $ \sum_{i \in
				\cE'} (\sqrt{q_i} - \sqrt{p_i})^2 \gtrsim \tau$.}

	By inequality~\cref{eq:hel-decomposition}, we have
	\begin{align*}
		\hel^2(p',q') \gtrsim
		e^\epsilon \cdot r  \sum_{i \in
			\cE'} q_i \delta_i^2 \gtrsim e^\epsilon \cdot r  \tau   \gtrsim \min\left( 1, e^\epsilon \frac{\tau}{\ell} \right) \tau,
	\end{align*}
	where we use the definition of $r$.

	Thus, we obtain the desired lower bound in both of the cases.
\end{proof}

\subsection{Proof of \Cref{lem:reduction}}
\label{app:proof_of_cref_lem_reduction}

\PropReduction*
\begin{proof}
	Let us begin by considering the case when $\sum_{i \in A \bigcup A'}
		\left(\sqrt{q_i} - \sqrt{p_i}\right)^2 \leq \frac{\hel^2(p,q)}{2}$.
	Following Pensia, Jog, and Loh~\cite[Theorem 3.2 (Case 1 in the proof)]{PenJL22}, there exists a
	binary channel that preserves the Hellinger divergence up to constants.
	This completes the case for $\ell=2$ above.

	Suppose for now that $\sum_{i \in A \bigcup A'} \left(\sqrt{q_i} -
		\sqrt{p_i}\right)^2 \geq \frac{\hel^2(p,q)}{2}$, i.e., the comparable elements
	constitute at least half the Hellinger divergence.
	Consider the channel $\bT'$ that maps the comparable elements of $p$ and
	$q$ to distinct elements, and maps the remaining elements to a single super-element.
	Let $\alpha$ be the contribution to the Hellinger divergence from the comparable
	elements in $\bT 'p$ and $\bT' q$ (defined analogously to
	equation~\Cref{eq:Adefinition}).
	It can be seen that $\alpha \geq \frac{\hel^2(p,q)}{2}$.
	Let $\ell \geq 3$ be as in the statement.
	Now consider the channel $\bT''$ that compresses $\bT'p$ and $\bT' q$ into
	$\ell$-ary distributions that preserve the Hellinger divergence, from Pensia, Jog, and Loh~\cite[Theorem 3.2 (Case 2 in the proof)]{PenJL22}.
	Let $\beta_\ell$ be the contribution to the Hellinger divergence from the
	comparable elements in $\bT''\bT 'p$ and $\bT''\bT' q$.
	Then the result in Pensia, Jog, and Loh~\cite[Theorem 3.2]{PenJL22} implies that $\beta_l \gtrsim
		\alpha \left( \ell /\min(k, 1 + \log(1/\hel^2(p,q))) \right)$.
	This completes the proof in this setting.
\end{proof}

\section{Properties of Private Channels} %
\label{sec:properties_of_private_channels}

Recall the definition of the set of channels $\cJ_{\ell,k}^{\gamma, \nu}$ from
\Cref{def:family-of-channels} below:
\DefLPFamily*
We begin by an equivalent characterization of the constraints above.
For a channel $\bT$ from $[k]$ to $[\ell]$, let $\{m_1, \dots, m_ \ell\}$ and
$\{M_1 , \dots, M_\ell\}$ be the minimum and maximum entries of each row,
respectively.
Then the channel $\bT$ satisfies the conditions~\Cref{eq:lp-convex-set} if
and only if for each $j \in [\ell] $, we have
\begin{align}
\label{eq:lp-set-equivalent}
M_j \leq \gamma_j m_j + \nu_j.
\end{align}

We first show that $\cJ_{\ell,k}^{\gamma, \nu}$ satisfies
\Cref{def:one_loose_entry}.
For the special case of LDP channels, the following claim was also proved in
Holohan, Leith, and Mason~\cite{HolLM17}:
\begin{claim}
	\label{claim:pvt_channels_one_loose_entry}
	$\cJ_{\ell,k}^{\gamma, \nu}$ satisfies \Cref{def:one_loose_entry}.
\end{claim}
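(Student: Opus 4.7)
The plan is to designate $m_r := \min_{c \in [k]} \bT(r,c)$ and $M_r := \max_{c \in [k]} \bT(r,c)$ as the row-wise bounds witnessing the property, and to argue by an exchange perturbation. If some column $c$ were to contain two entries that lie strictly in the interiors $(m_{r_i}, M_{r_i})$ for two distinct rows $r_1, r_2$, I will construct two channels $\bT^\pm \in \cJ_{\ell,k}^{\gamma,\nu}$ with $\bT^+ \neq \bT^-$ and $\bT = \tfrac{1}{2}\bT^+ + \tfrac{1}{2}\bT^-$, contradicting extremality.

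Concretely, let $E \in \real^{\ell \times k}$ have $E(r_1,c)=+1$, $E(r_2,c)=-1$, and all other entries zero, and set $\bT^\pm := \bT \pm \epsilon E$ for a small $\epsilon > 0$ to be chosen. Column stochasticity of $\bT^\pm$ is automatic because $E$ has column sum zero; nonnegativity and the bound $\bT^\pm(r,c') \in [0,1]$ hold for $\epsilon$ small enough since both perturbed entries lie in the open interval $(m_{r_i}, M_{r_i}) \subseteq [0,1]$. The only nontrivial check is the LP constraint~\Cref{eq:lp-set-equivalent}, which in its equivalent form asserts $M_r \le \gamma_r m_r + \nu_r$ for every row $r$. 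It therefore suffices to verify that the row-wise minimum and maximum of $\bT^\pm$ coincide with those of $\bT$, as then the constraint is inherited verbatim. This is where the ``strict interior'' hypothesis is used: because $\bT(r_i, c) \in (m_{r_i}, M_{r_i})$ strictly, the row extrema $m_{r_i}$ and $M_{r_i}$ must be attained at columns other than $c$, and those columns are untouched by $E$; for $\epsilon$ small enough the perturbed entries at $(r_1, c)$ and $(r_2, c)$ themselves remain strictly inside $(m_{r_i}, M_{r_i})$ and hence do not create new extrema in their rows.

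This yields $\bT^\pm \in \cJ_{\ell,k}^{\gamma,\nu}$ with $\bT^+ \neq \bT^-$, contradicting the assumption that $\bT$ is extreme. There is no genuinely hard step here; the core observation is that a loose entry is, by definition, never an endpoint of its row, so mass can always be exchanged freely between two loose entries in the same column without affecting the row-wise bounds that define the constraint set. The same reasoning adapts without change to the broader setting described in \Cref{rem:broad-tight}, where the linear functions $\gamma_j x + \nu_j$ are replaced by arbitrary monotone functions $f_j$, since we only used monotonicity of the defining constraint in $m_r$ and $M_r$.
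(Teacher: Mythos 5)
Your proposal is correct and is essentially the same as the paper's argument: the paper likewise sets $m_r, M_r$ to be the row-wise extrema, constructs the two perturbations $\bT' = \bT + \epsilon E$ and $\bT'' = \bT - \epsilon E$ that exchange mass between two free entries in a common column, and observes that for $\epsilon$ small enough the row-wise extrema (and hence the constraint~\Cref{eq:lp-set-equivalent}) are unchanged, contradicting extremality. Your packaging via the rank-two matrix $E$ is a cosmetic difference only.
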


\begin{proof}
	Let $\bT$ be any extreme point of $\cJ_{\ell,k}^{\gamma, \nu}$.
	Let $\{m_1,\dots, m_\ell\}$ and $\{M_1,\dots, M_\ell\}$ be as defined above.
	Suppose that there exists $c \in [k]$,
	such that there exist distinct $r, r' \in [\ell]$
	with $\bT(r,c) \in (m_r, M_r)$ and $\bT(r',c) \in (m_{r'}, M_{r'})$.
	In particular, both $\bT(r,c)$ and $\bT(r',c)$ are strictly positive and less
	than $1$.

	We will now show that $\bT$ is not an extreme point of $\cJ_{\ell,k}^{\gamma,
			\nu}$.
	For an $\epsilon > 0$ to be decided later, consider
	the channel $\bT'$ that is equal to $\bT$ on all but two entries:
	\begin{itemize}
		\item On $(r,c)$, $\bT'$ assigns probability
		      $\bT(r,c) + \epsilon$.
		\item On $(r',c)$, $\bT'$ assigns probability
		      $\bT(r',c) - \epsilon$.
	\end{itemize}
	Now define $\bT''$ similarly, with the difference being that on $(r,c)$, $\bT''$
	assigns probability
	$\bT(r,c) - \epsilon$, and on $(r',c)$, $\bT''$ assigns probability
	$\bT(r',c) + \epsilon$.
	Both $\bT'$ and $\bT''$ are thus valid channels for $\epsilon$ small enough.
	Let us show that $\bT'$ and $\bT''$ belong to $\cC$.
	If we choose $\epsilon > 0$ small enough,
	the row-wise maximum and minimum entries of $\bT'$ and $\bT''$
	are equal to those of $\bT$.
	Here, we critically use the fact that the entries that were modified were ``free.''
	By inequality~\Cref{eq:lp-set-equivalent}, both $\bT'$ and $\bT''$ belong to
	$\cJ_{\ell,k}^{\gamma, \nu}$.
	Since $\bT$ is the average of $\bT'$ and $\bT''$, it is not an extreme point of
	$\cJ_{\ell,k}^{\gamma, \nu}$.
\end{proof}

We now show that $\cJ_{\ell,k}^{\gamma, \nu}$ satisfies
\Cref{def:mass_transfer}.
\begin{claim}
\label{cl:LP-channels-condition}
	$\cJ_{\ell,k}^{\gamma, \nu}$ satisfies \Cref{def:mass_transfer}.
\end{claim}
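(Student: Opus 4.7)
The plan is to verify that $\bT'$ inherits from $\bT$ the three defining properties of $\cJ_{\ell,k}^{\gamma,\nu}$---column stochasticity, nonnegativity, and the LP inequalities \Cref{eq:lp-convex-set}---for all sufficiently small $\epsilon,\delta>0$. Column stochasticity is immediate: the two modifications inside column $c$ are $-\epsilon$ at row $r$ and $+\epsilon$ at row $r'$ and cancel, and similarly for column $c'$. Nonnegativity holds whenever $\epsilon < M$ and $\delta < M'$, which together with the LP-preservation requirement below determines a valid choice of $\epsilon',\delta' > 0$.

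The heart of the argument is the verification of $\bT'(j,i) \leq \gamma_j \bT'(j,i') + \nu_j$ for every triple $(j,i,i')$. Setting $f_{j,i,i'}(\bT) := \bT(j,i) - \gamma_j \bT(j,i') - \nu_j$, the key structural fact is: since $\bT(j,i) \leq M_j$, $\bT(j,i') \geq m_j$, and $\gamma_j \geq 0$, the chain
\begin{align*}
\bT(j,i) \;\leq\; M_j \;\leq\; \gamma_j m_j + \nu_j \;\leq\; \gamma_j \bT(j,i') + \nu_j
\end{align*}
yields $f_{j,i,i'}(\bT) = 0$ if and only if all three inequalities are tight, i.e., $(j,i)$ attains the row maximum $M_j$, $(j,i')$ attains the row minimum $m_j$, and row $j$ satisfies $M_j = \gamma_j m_j + \nu_j$. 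In the vocabulary of \Cref{def:loose-and-tight}, this says $f_{j,i,i'}(\bT)=0$ iff $(j,i)$ is max-tight and $(j,i')$ is min-tight.

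Armed with this characterization, I would partition the constraints into three cases. If neither $(j,i)$ nor $(j,i')$ is a modified entry, then $f_{j,i,i'}(\bT') = f_{j,i,i'}(\bT) \leq 0$ trivially. If at least one is modified but the original constraint is strict, continuity of the linear map $f_{j,i,i'}$ preserves strictness for all sufficiently small $\epsilon,\delta$; since there are only finitely many constraints, a single uniform choice of $\epsilon',\delta'$ works for all of them. The substantive case is a tight constraint that touches a modified entry.

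For such a constraint, the characterization forces $(j,i)$ to be max-tight and $(j,i')$ to be min-tight. Here the hypotheses of \Cref{def:mass_transfer} enter decisively: $(r,c')$ and $(r',c)$ are assumed \emph{not} max-tight, so a modified $(j,i)$ must lie in $\{(r,c),(r',c')\}$, precisely the entries whose values \emph{decrease} under the perturbation, so the LHS cannot increase; symmetrically, $(r,c)$ and $(r',c')$ are \emph{not} min-tight, so a modified $(j,i')$ must lie in $\{(r,c'),(r',c)\}$, the entries whose values \emph{increase}, so the RHS $\gamma_j \bT'(j,i') + \nu_j$ cannot decrease. In every sub-case $f_{j,i,i'}(\bT') \leq f_{j,i,i'}(\bT) = 0$, so the constraint is preserved. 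The main obstacle is establishing the tightness characterization; once that is in hand, matching the four not-tight hypotheses against the four modified positions is what makes the argument snap shut.
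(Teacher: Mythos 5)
Your proof is correct and takes essentially the same route as the paper's. Both treat column stochasticity and nonnegativity briefly and then focus on preserving the LP inequalities, splitting on whether a given constraint is strict (absorbed by a uniform small perturbation, thanks to finiteness) or tight (the not-max-tight and not-min-tight hypotheses pin the modified positions down and ensure the perturbation only moves each such entry in the safe direction). The paper's bookkeeping is at the row level: using the reformulation in equation~\Cref{eq:lp-set-equivalent}, the constraints in row $j$ collapse to $M_j \le \gamma_j m_j + \nu_j$, and the paper checks only rows $r$ and $r'$; you instead track each individual inequality $f_{j,i,i'}$. Same idea, finer granularity. One inaccuracy worth fixing: your stated equivalence ``$f_{j,i,i'}(\bT) = 0$ iff $(j,i)$ is max-tight and $(j,i')$ is min-tight'' requires $\gamma_j > 0$. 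When $\gamma_j = 0$ (which the definition of $\cJ_{\ell,k}^{\gamma,\nu}$ allows), the third link in your chain, $\gamma_j m_j + \nu_j \le \gamma_j \bT(j,i') + \nu_j$, is an equality for \emph{every} $\bT(j,i')$, so tightness of $f_{j,i,i'}$ does not force $(j,i')$ to sit at the row minimum. Your argument still closes in that case---the right-hand side is then the constant $\nu_j$ and cannot move under the perturbation, so only the max-tight half of the case analysis is needed---but the ``iff'' as written is too strong and should either be stated for $\gamma_j>0$ or replaced by the weaker one-sided implication you actually use.
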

\begin{proof}
	We follow the notation from \Cref{def:mass_transfer}.
	Let $\bT$ be an extreme point of $\cJ_{\ell,k}^{\gamma, \nu}$, and let $r$ and
	$r'$ be the corresponding rows.
	We show that $\bT'$ (defined in the condition) belongs to
	$\cJ_{\ell,k}^{\gamma, \nu}$ by showing that entries of $\bT'$ satisfy the
	constraints of the $r^{\text{th}}$ row and the $r'^{\text{th}}$ row (since the other rows are
	unchanged).
	In fact, we establish these arguments only for the $r^{\text{th}}$ row, and the analogous
	arguments hold for the $r'^{\text{th}}$ row.

	Let $m_r$ and $M_r$ be the row-wise minimum and maximum entry of this row in $\bT$.
	Let us first consider the case when $M_r< \gamma_r m_r + \nu_r$.
	Then there exist positive $\epsilon'$ and $\delta'$ such that
	$M_r + \delta < \gamma_r (m_r - \epsilon) + \nu_r$.
	By inequality~\Cref{eq:lp-set-equivalent},
	as long as the $r^{\text{th}}$ row of a channel contains entries in $[m_r -
				\epsilon, M_r + \delta]$, the constraints of this particular row will be
	satisfied.
	Since the entries in the $r^{\text{th}}$ row of $\bT'$ belong to this interval, the
	constraints of the $r^{\text{th}}$ row are satisfied by $\bT'$.

	Let us now consider the alternate case where $M_r = \gamma_r m_r + \nu_r$.
	Since $m$ and $M$ do not correspond to the min-tight and max-tight
	entries, we have $m_r < M$ and $m < M_r$.
	Consequently, even after perturbations by $\epsilon > 0$ and $\delta > 0$
	small enough, the entries of $\bT'$ lie in $[m_r, M_r]$.
	Thus, inequality~\Cref{eq:lp-set-equivalent} implies that the constraints of the $r^{\text{th}}$ row in
	$\bT'$ are satisfied.
\end{proof}

\begin{restatable}[Closure under pre-processing]{claim}{ClComposition}
	The set $\cJ_{\ell,k}^{\gamma, \nu}$ satisfies the following:
	\begin{align}
	\cJ_{\ell,k}^{\gamma, \nu} = \bigcup_{\ell'=1}^k \left\{\bT_2 \times \bT_1 : \bT_2 \in \cJ_{\ell, \ell'}^{\gamma, \nu} \textnormal{
			and } \bT_1 \in \cT_{\ell', k} \right\}.
	\end{align}
\end{restatable}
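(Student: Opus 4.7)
The plan is to prove the set equality \Cref{cl:composition} by establishing both inclusions, using the equivalent characterization of $\cJ_{\ell,k}^{\gamma,\nu}$ given in equation~\Cref{eq:lp-set-equivalent}: namely, $\bT \in \cJ_{\ell,k}^{\gamma,\nu}$ if and only if for each row $j \in [\ell]$, one has $M_j \le \gamma_j m_j + \nu_j$, where $m_j := \min_i \bT(j,i)$ and $M_j := \max_i \bT(j,i)$.

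For the ($\subseteq$) inclusion, I would use a trivial decomposition. Given any $\bT \in \cJ_{\ell,k}^{\gamma,\nu}$, I would take $\ell' = k$, let $\bT_1$ be the identity channel from $[k]$ to $[k]$ (which belongs to $\cT_{k,k}$), and let $\bT_2 = \bT$ (which belongs to $\cJ_{\ell,k}^{\gamma,\nu} = \cJ_{\ell,\ell'}^{\gamma,\nu}$ by hypothesis). Then $\bT_2 \times \bT_1 = \bT$, which shows that $\bT$ belongs to the right-hand side.

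For the ($\supseteq$) inclusion, the key observation is that pre-processing by $\bT_1$ produces columns that are convex combinations of the columns of $\bT_2$. Concretely, fix any $\ell' \in [k]$, $\bT_2 \in \cJ_{\ell,\ell'}^{\gamma,\nu}$, and $\bT_1 \in \cT_{\ell',k}$, and set $\bT := \bT_2 \times \bT_1$. For any row $j \in [\ell]$ and column $i \in [k]$,
\begin{align*}
\bT(j,i) = \sum_{s \in [\ell']} \bT_2(j,s)\, \bT_1(s,i)
\end{align*}
is a convex combination of $\{\bT_2(j,s)\}_{s \in [\ell']}$, since $\bT_1(\cdot, i) \in \Delta_{\ell'}$. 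Denoting $m_j^{(2)} := \min_s \bT_2(j,s)$ and $M_j^{(2)} := \max_s \bT_2(j,s)$, it follows that $m_j^{(2)} \le \bT(j,i) \le M_j^{(2)}$ for every $i \in [k]$. Combined with the inequality $M_j^{(2)} \le \gamma_j m_j^{(2)} + \nu_j$ (which holds by $\bT_2 \in \cJ_{\ell,\ell'}^{\gamma,\nu}$), this yields
\begin{align*}
\bT(j,i) \le M_j^{(2)} \le \gamma_j m_j^{(2)} + \nu_j \le \gamma_j \bT(j,i') + \nu_j
\end{align*}
for all $i, i' \in [k]$, which verifies that $\bT \in \cJ_{\ell,k}^{\gamma,\nu}$ via \Cref{eq:lp-convex-set}.

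There is no substantive obstacle in this proof; the main subtlety is simply recognizing that the row-wise min/max of the product $\bT_2 \times \bT_1$ is sandwiched between $m_j^{(2)}$ and $M_j^{(2)}$, which is precisely what makes the LP constraint propagate through post-composition with an arbitrary (non-private) $\bT_1$ on the input side.
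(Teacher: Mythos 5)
Your proof is correct and takes essentially the same route as the paper: the $(\subseteq)$ direction via the identity channel, and the $(\supseteq)$ direction by observing that each column of $\bT_2 \times \bT_1$ is a convex combination of the columns of $\bT_2$, so the row-wise extremes can only contract, propagating the LP constraint. The only cosmetic difference is that the paper phrases the conclusion via the equivalent row-wise min/max inequality~\Cref{eq:lp-set-equivalent}, whereas you verify the defining pairwise condition~\Cref{eq:lp-convex-set} directly.
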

\begin{proof}
	We first show the simple direction that $\cJ_{\ell,k}^{\gamma, \nu} \subseteq
		\bigcup_{\ell'=1}^k \left\{\bT_2 \times \bT_1 : \bT_2 \in
		\cJ_{\ell,\ell'}^{\gamma, \nu} \textnormal{and } \bT_1 \in \cT_{\ell', k}
		\right\}$.
	Let $\bI_k$ correspond to the identity channel on $[k]$.
	Then every channel $\bT \in \cJ_{\ell,k}^{\gamma, \nu}$, can be written as
	$\bT \times \bI$.
	Thus, $\cJ_{\ell,k}^{\gamma, \nu} \subseteq \left\{\bT_2 \times \bI_k : \bT_2
		\in \cJ_{\ell,\ell'}^{\gamma, \nu}\right\}$, and the desired conclusion
	follows.

	We now show that every channel in the right-hand side belongs to
	$\cJ_{\ell,k}^{\gamma, \nu}$.
	For an arbitrary $\ell' \in [k]$, let $\bT_2 \in \cJ_{\ell,\ell'}^{\gamma,
			\nu}$.
	Define $\{m_1, \dots, m_ \ell\}$ and
	$\{M_1 , \dots, M_\ell\}$ to be the minimum and maximum entries of each row in
	$\bT_2$, respectively.
	By inequality~\Cref{eq:lp-set-equivalent}, for each $j \in [\ell]$, we have $M_j
		\leq \gamma_j m_j + \nu_j$.
	Let $\bT_1 \in \cT_{\ell',k} $ be an arbitrary channel.

	Let $\bT = \bT_2 \times \bT_1$ be in $\cT_{\ell,k}$,
	and let $\{m_1, \dots, m_ \ell\}$ and $\{M'_1 , \dots, M'_\ell\}$ be the minimum and
	maximum entries of each row in $\bT$, respectively.
	In order to show that $\bT \in \cJ_{\ell,k}^{\gamma, \nu}$, we need to show
	that for each $j \in [\ell]$, we have $M_j' \leq \gamma_j m_j' + \nu_j$.
	Since it already holds that $M_j \leq \gamma_j m_j + \nu_j$ for all $j$, it
	suffices to show that $M_j' \leq M_j$ and $m_j ' \geq m_j$ for all $j$.
	Observe that for any $c \in [k]$ and $r \in [\ell]$,
	the $(r,c)$-entry of $\bT$ is a convex combination of the $r^{\text{th}}$ row in
	$\bT_2$, where the weights in the convex combination correspond to the $c^{\text{th}}$
	column in $\bT_1$.
	Since the maximum of a collection of items is always as large as any convex
	combination of these items,
	we have $M_j' \leq M_j$ for all $j$.
	Similarly, we have $m_j' \geq m_j$.
	This completes the proof.
\end{proof}

\section{Other Notions of Privacy} %
\label{app:other_notions_of_privacy}
We provide the proof of the following result, omitted from
\Cref{sec:extensions_to_other_privacy}:
\LemSampCompApproxLDP*
\begin{proof}
	Let $\bT$ be an $\epsilon$-LDP channel that maximizes $\hel^2(\bT p, \bT
		q)$ among all $\epsilon$-LDP channels.
	Let $\bT'$ be the following channel that maps from $[k]$ to $[2k]$:
	for any element $i \in [k]$, use the channel $\bT$, and with probability
	$\delta$, map $i$ to $k + i$.
 It can be seen that $\bT'$ satisfies $(\epsilon,\delta)$-LDP.
	Let $p'$ and $q'$ be the corresponding distributions after transforming $p$ and
	$q$ using $\bT'$.
	It can be seen that $p'$ is a distribution over $[2k]$ such that the first $k$
	elements are equal to $(1 - \delta)\bT p$ coordinate-wise, and the bottom $k$
	elements are equal to $ \delta p$ coordinate-wise.
	A similar conclusion holds for $q'$, as well.
	Thus, we have
	\begin{align*}
\hel^2(\bT' p, \bT'q) &= (1 - \delta) \cdot \hel^2(\bT p, \bT q) + \delta \cdot \hel^2(p,q) \\
&\asymp \max\left(  \left( 1 - \delta \right) \cdot \hel^2(\bT p, \bT q), \delta \cdot \hel^2( p, q)  \right) \\
&\asymp \max\left(  \left( 1 - \delta \right) \cdot \frac{1}{\nstar(p,q,\epsilon)}, \delta \cdot \frac{1}{\nstar(p,q)}  \right).
\end{align*}
	By \Cref{fact:sample-complexity},
	the sample complexity $\nstar(p,q,(\epsilon,\delta))$ is at most $1/\hel^2(\bT
		'p, \bT 'q)$, which gives the upper bound on $\nstar(p,q,(\epsilon,\delta))$.

	The tightness follows from the result of Kairouz, Oh, and Viswanath~\cite[Theorem 18]{KaiOV16}, which
	implies that $\bT'$ defined above is an optimal channel for binary distributions.
\end{proof}

\section{Auxiliary Lemmas} %
\label{sec:supplementary}

\subsection{Degenerate Conditions for Joint Range}
\label{sec:regu-cond-joint-range}
We show in this section that we can safely rule out certain degenerate
conditions for $p$ and $q$
for our results.
Let $p$ and $q$ be two distributions on $[k]$.
In particular, we would like to assume the following:
\begin{itemize}
	\item Consider the likelihood ratio $p_i/q_i$, defined to be $\infty$
	      if $q_i = 0$ and $p_i \ne 0$, and undefined if both $p_i$ and $q_i$ are 0.
	     Assume that all the likelihood ratios are well-defined and unique.
\end{itemize}
If these conditions do not hold, define $p'$ and $q'$ to be distributions over
$[k']$ for some $k' \le k$, constructed as follows: start by removing elements
that have zero probability mass under both $p$ and $q$, then merge the elements
with the same likelihood ratios into super-elements.
Let $\bT^* \in \cT_{k',k}$ be the corresponding deterministic map, which
satisfies $p' = \bT^* p$ and $q' = \bT^* q$.
We make the following claim:
\begin{claim}
	\label{claim:degenerate-p-q}
	With the notation above, for any $\ell \in \N$ and $\bT \in \cT_{\ell,k}$,
	there exists $\bT' \in \cT_{\ell,k'}$ such that
	$(\bT p, \bT q) = (\bT' p', \bT' q')$.
	In particular, $\{(\bT p, \bT q): \bT \in \cC \} = \{(\bT p', \bT q'): \bT \in
		\cC' \}$ for two choices of $\cC$ and $\cC'$: (i) $(\cC, \cC') = (\cT_{\ell,k},
		\cT_{\ell,k'})$ and (ii) $(\cC, \cC') = (\cP^\epsilon_{\ell,k},
		\cP^\epsilon_{\ell,k'})$.
\end{claim}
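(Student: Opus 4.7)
The plan is to prove the claim by explicitly constructing the channel $\bT'$ (and, for the converse direction, constructing $\bT$). First, I would dispense with the elements where $p_i = q_i = 0$: these contribute nothing to either $\bT p$ or $\bT q$, so I can simply drop their columns from $\bT$ without affecting the joint range, which reduces to the case where every index is a member of some nontrivial equivalence class of the likelihood ratio.

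Next, enumerate the equivalence classes of $[k]$ under the relation $i \sim j \iff p_i/q_i = p_j/q_j$; call them $C_1, \dots, C_{k'}$. Let $\theta_j$ denote the common likelihood ratio on $C_j$, and recall $p'_j = \sum_{i \in C_j} p_i$, $q'_j = \sum_{i \in C_j} q_i$. The key construction is to define, for each output $r \in [\ell]$ and each class $j$,
\begin{align*}
\bT'(r,j) \;:=\; \frac{\sum_{i \in C_j} \bT(r,i)\, p_i}{\sum_{i \in C_j} p_i} \quad \text{if } p'_j > 0, \qquad
\bT'(r,j) \;:=\; \frac{\sum_{i \in C_j} \bT(r,i)\, q_i}{\sum_{i \in C_j} q_i} \quad \text{if } p'_j = 0.
\end{align*}
The central observation is that when $0 < \theta_j < \infty$, the two expressions coincide because $p_i = \theta_j q_i$ on $C_j$; this is precisely where the uniqueness of likelihood ratios is used. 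Column-stochasticity of $\bT'$ follows from $\sum_r \bT(r,i) = 1$ for each $i$. For the pushforwards, summing over $j$ gives
\[(\bT' p')_r \;=\; \sum_j \bT'(r,j)\, p'_j \;=\; \sum_j \sum_{i \in C_j} \bT(r,i)\, p_i \;=\; (\bT p)_r,\]
and the identical computation using $q$-weights (valid by the common-ratio observation) yields $\bT' q' = \bT q$.

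For the second statement, both inclusions must be verified for $\cC \in \{\cT_{\ell,k}, \cP^\epsilon_{\ell,k}\}$. The inclusion $\{(\bT p, \bT q) : \bT \in \cC\} \subseteq \{(\bT p', \bT q') : \bT \in \cC'\}$ follows from the construction above, provided $\bT' \in \cC'$ whenever $\bT \in \cC$; this is immediate for $\cC = \cT_{\ell,k}$, and for the LDP case it follows because each $\bT'(r,j)$ is a convex combination of $\{\bT(r,i) : i \in C_j\}$, so $\bT'(r,j)/\bT'(r,j') \le \max_{i \in C_j} \bT(r,i)/\min_{i' \in C_{j'}} \bT(r,i') \le e^\epsilon$. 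For the reverse inclusion, given $\bT' \in \cC'$, define $\bT \in \cC$ by copying columns: set $\bT(\cdot, i) = \bT'(\cdot, j)$ whenever $i \in C_j$ (and assign any column of $\bT'$ to the discarded $p_i = q_i = 0$ indices). A direct computation gives $\bT p = \bT' p'$ and $\bT q = \bT' q'$, and LDP is trivially preserved since the column set is unchanged.

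I do not anticipate any significant obstacle; the argument is essentially a bookkeeping exercise. The only delicate point is ensuring that the single channel $\bT'$ simultaneously produces the correct pushforwards of both $p$ and $q$, and this delicacy is resolved by the defining property of the equivalence classes (constant likelihood ratio). The extreme cases $\theta_j \in \{0, \infty\}$ are handled cleanly by the two-case definition of $\bT'$ above.
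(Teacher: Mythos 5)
Your proposal is correct and matches the paper's proof essentially step for step: the same two degenerate cases are disposed of, the same column-averaging formula defines $\bT'$ (with the same two-case split on $p'_j > 0$ versus $p'_j = 0$), and the same constant-likelihood-ratio observation ties the $p$-weighted and $q$-weighted averages together. The only cosmetic difference is in the LDP closure step: you verify $\bT' \in \cP^\epsilon_{\ell,k'}$ directly by bounding ratios of convex combinations, whereas the paper factors $\bT' = \bT \bT_1$ for a suitable $\bT_1 \in \cT_{k,k'}$ and appeals to closure of LDP under pre-processing; these are the same argument unpacked to different degrees, since your convex-combination bound is precisely why pre-processing preserves LDP.
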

\Cref{claim:degenerate-p-q} ensures that the joint ranges of $(p,q)$ and $(p',q')$ are identical, so our structural and algorithmic results continue to hold when applied to $p'$ and $q'$.
We will now prove \Cref{claim:degenerate-p-q}.
\begin{proof}[Proof of \Cref{claim:degenerate-p-q}]
	Let $\{\cI_0, \cI_1, \dots, \cI_{k'}\}$ be the smallest partition of $[k]$ such that $\cI_0$ contains elements where both $p_i$ and $q_i$ are zero, and
	for each $i \in [k']$, the likelihood ratio of elements in $\cI_i$ are identical.
	Then the channel $\bT^*$ mentioned above has the following form: $\bT^*(x) = i$
	if $x \in \cI_i$ and $i > 0$, and $\bT^*(x) = 1$ if $x \in \cI_0$.
	Observe that for each $i \in [k']$, we have
	$p'_i = \sum_{j \in \cI_i} p_j$,
	$q'_i = \sum_{j \in \cI_i} q_j$,
	and at most one of them is zero.

	Now consider a channel $\bT \in \cT_{\ell,k}$, and let $\{v_1, \dots, v_k\}$ be the
	columns of $\bT$.
	It is easy to see that columns belonging to indices in $\cI_0$ do not affect
	$(\bT p, \bT q)$.
	For $i \in [k']$, define $\theta'_i := p'_i/q'_i$ to be the likelihood ratio of
	the transformed distributions.
	Define $\bT'$ to be the channel with columns $v'_1, \dots, v'_k$ such that
	\begin{align*}
v'_i = \begin{cases}\frac{\sum_{j \in \cI_i} v_j p_j}{p'_i} & \text{ if } p'_i > 0, \\
\frac{\sum_{j \in \cI_i} v_j q_j}{q'_i} & \text{ otherwise}.\end{cases}  
\end{align*}

	First consider the case when for all $i \in [k']$, we have $0 < \theta'_i < \infty$.
	Then for all $i \in [k']$, we have $p'_i = \theta'_i q'_i$
	and $v'_i = \frac{\sum_{j \in \cI_i} v_j p_j}{p'_i} = \frac{\sum_{j \in \cI_i}
			v_j q_j}{q'_i}$.
	Thus, we have
	\begin{align*}
(\bT p, \bT q) 
	&= \left(\sum_{i \in [k']}\sum_{j \in \cI_i} v_j p_j, \sum_{i \in [k']} \sum_{j \in \cI_i}v_j q_j\right)\\
	&= \left(\sum_{i \in [k']} p'_i \cdot \left( \frac{\sum_{j \in \cI_i} v_j p_j}{p'_i}  \right) , \sum_{i \in [k']} q'_i \cdot \left( \frac{\sum_{j \in \cI_i} v_j q_j}{q'_i}\right)\right)\\
	&= \left(\sum_{i \in [k']} p'_i v'_i, \sum_{i \in [k']} q'_i v'_i\right) = \left( \bT'p', \bT' q' \right).
\end{align*}
	We now consider the case when there is an index $a \in [k']$ such that $p'_{a}
		= 0$ and an index $b \in [k']$ such that $q'_{b} = 0$.
	Then it must be that $\theta'_{a} = 0$ and $\theta'_b = \infty$.
	Then $v'_{a} = \frac{\sum_{j \in \cI_i} v_j q_j}{q'_i}$ and $v'_b =
		\frac{\sum_{j \in \cI_i} v_j p_j}{p'_i}$.
	Following the calculations above, we obtain $\sum_{j \in \cI_i} v_j p_j = v'_i p'_i$ for each $i \in [k]\setminus
		\{a\}$.
	In fact, the same result is true for $i = a$, since both sides are 0.
	The same conclusion holds for $q$ and $q'$, as well.
	This completes the proof of the first claim.

	We now turn to the final claim, regarding the joint range under the channel
	constraints of $\cC$.
	The case $\cC = \cT_{\ell,k}$ is immediate from the preceding discussion.
	Let $\bT_1 \in \cT_{k,k'}$ be such that $(p,q) = (\bT_1 p' ,\bT_1 q')$ and $\bT_2
		\in \cT_{k',k}$ be such that $(p',q') = (\bT_2 p ,\bT_2 q)$.
	For $\cC = \cP^\epsilon_{\ell,k}$ and $\cC' = \cP^\epsilon_{\ell,k'}$,
	we only need to show that
	(i) if $\bT' \in \cC'$, then $\bT' \bT_2 \in \cC$; and (ii) if $\bT \in \cC$,
	then $\bT\bT_1 \in \cC'$.
	Both of these conditions hold because privacy is closed under pre-processing.
\end{proof}

\subsection{Valid Choice of Parameters in \Cref{lem:worst-case-samp-comp}}
\label{sec:valid-choice-worst-case}

    We now give the details that were omitted in the proof of \Cref{lem:worst-case-samp-comp} in \Cref{sec:worst-case-samp-comp-lwr-bds}.

    We first reparametrize the problem by setting $x = \gamma$ and $y = \gamma^{1+\delta}$. 
    The constraint $\delta > 0$ is equivalent to $y < x$.
    Then $\dtv(p,q) = x + y$, and
        $$
        \hel^2(p,q) = 2 y + \left(\sqrt{1/2 + x - y} - \sqrt{1/2} \right)^2 + \left(\sqrt{1/2 - x - y} - \sqrt{1/2} \right)^2 .
        $$
    We begin by setting $\nu = x + y$, which is possible since $0 \leq y < x < 0.25$ and $\nu \in (0, 0.5)$.
    Then $x = \nu - y$, where $y \in (0, \nu/2)$ and $\nu \in (0,0.5)$.   
    Our goal is now to show that there exists a valid choice of $y$ such that $\hel^2(p,q) = \rho$, as long as $2\nu^2 \leq \rho \leq \nu$.

    Define $g(y)$ to be the Hellinger divergence between $p$ and $q$ given $y$, i.e.,
    $$ g(y) = 2 y + \left(\sqrt{1/2 + \nu - 2 y} - \sqrt{1/2} \right)^2 + \left(\sqrt{1/2 - \nu} - \sqrt{1/2} \right)^2.
    $$
    Since $g$ is a continuous function, it suffices to show that $g(0) < 2\nu^2$ and $g(\nu/2) > \nu$, which would imply that there is a choice of $y \in (0, \nu/2)$ such that $g(y) = \rho$.
    We have 
    $$g(0) = \left(\sqrt{1/2 + \nu} - \sqrt{1/2} \right)^2 + \left(\sqrt{1/2 - \nu} - \sqrt{1/2} \right)^2 \leq 3 \nu^2/2, $$
    where we use the fact that $\left|\sqrt{1/2 + a }-\sqrt{1/2}\right| \leq a$ for all $a \geq 0$, and is less than $|a|/2$ for $a \leq 0$.
    On the other hand, $g(\nu/2) > \nu$, since $\nu < 1/2$.
    Thus, there is a choice of $y \in (0, \nu/2)$ such that $\hel^2(p,q) = \rho$.
    Given these choices of $x$ and $y$, we can infer the choice of $\gamma \in (0,0.25)$ and $\delta > 0$.

\subsection{Taylor Approximation to Hellinger Divergence}

\ClApproxHel*
\begin{proof}
	It suffices to prove that for $\delta \in (0,1]$, we have $1 - \sqrt{1
			-\delta} \asymp \delta$.
	We first start with the upper bound: since $1 - \delta \leq \sqrt{1 - \delta}$,
	we have 
	$1 - \sqrt{1 - \delta} \leq \delta$.
	We now show the lower bound and claim that $1 - \sqrt{1 - \delta} \geq 0.5
		\delta$ for all $\delta \in [0,1]$.
	This inequality is equivalent to showing $1 - 0.5 \delta \geq \sqrt{1 -
			\delta}$, which is equivalent to showing that $1 + 0.25 \delta^2 - \delta \geq 1 -
		\delta$, which holds since $\delta^2 \geq 0$.
\end{proof}

\ApprxHel*
\begin{proof}
	Let $q$ be the larger of the two quantities, so $p$ satisfies $p \leq
		\frac{1}{2}$.
	The total variation distance is thus $q-p$.
	Let $\delta = (q-p)/q \in (0,1]$.
	Observe that $p = q - q\delta$ and the total variation distance is $\delta q$.

	We begin by noting that \Cref{cl:approx-sqRt} implies that
	\begin{align}
	\label{eq:bin-hell-part1}
	\left(\sqrt{q} -
		\sqrt{p}\right)^2
        = \left(\sqrt{q} -
		\sqrt{q - \delta q}\right)^2 \asymp \frac{\delta^2 q^2}{q} \asymp \frac{\dtv^2\left(\Ber(p), \Ber(q)\right)}{q}.
	\end{align}

	We now split the analysis into two cases:

	\paragraph{Case 1: $q \leq 1/2$.} %
	\label{par:case_1_}
	Then Pensia, Jog, and Loh~\cite[Claim E.2]{PenJL22} implies that $\hel^2( \Ber(p), \Ber(q))
		\asymp(\sqrt{q} - \sqrt{p})^2$.
	Thus, equation~\Cref{eq:bin-hell-part1} implies the result.

	\paragraph{Case 2: $q \geq 1/2$.} %
	\label{par:case_2_}

	Applying \Cref{cl:approx-sqRt} again to the second term, we obtain
	\begin{align}
	\label{eq:bin-hell-part2}
	\left(\sqrt{1-p} - \sqrt{1- q}\right)^2
		= \left( \sqrt{1 - p} - \sqrt{1 - p - q \delta} \right)^2
		\asymp \frac{q^2 \delta^2}{1-p} 
		\asymp \frac{q^2 \delta^2}{q}   \asymp \frac{\dtv^2(\Ber(p), \Ber(q))}{q},
		\end{align}
	where we use the fact that $1- p \asymp q$, since $p,q \in [0.5,1]$.
	The desired conclusion follows from
	equations~\Cref{eq:bin-hell-part1} and~\Cref{eq:bin-hell-part2}.
	\end{proof}

\end{document}